\documentstyle[11pt,amssymb,amsmath,amscd,amsbsy,amsfonts,theorem,hyperref,accents]{article}

\input xy
\xyoption{all}

\pagestyle{plain}
\textwidth=17.5cm
\oddsidemargin=-1cm
\evensidemargin=-1cm
\topmargin=-1cm
\textheight=23cm

\newcommand{\zz}{{\Bbb Z}}

\newcommand{\nn}{{\Bbb N}}
\newcommand{\cc}{{\Bbb C}}
\newcommand{\rr}{{\Bbb R}}
\newcommand{\qq}{{\Bbb Q}}
\newcommand{\pp}{{\Bbb P}}

\newcommand{\ff}{{\Bbb F}}

\newcommand{\ddim}{\operatorname{dim}}
\newcommand{\ddeg}{\operatorname{deg}}
\newcommand{\kker}{\operatorname{Ker}}

\newcommand{\Homi}{\underline{\operatorname{Hom}}}
\newcommand{\Hom}{\operatorname{Hom}}
\newcommand{\End}{\operatorname{End}}
\newcommand{\Mat}{\operatorname{Mat}}
\newcommand{\op}[1]{\operatorname{#1}}
\newcommand{\kbar}{\overline{k}}

\newcommand{\ffi}{\varphi}

\newcommand{\eps}{\varepsilon}

\newcommand{\la}{\langle}
\newcommand{\ra}{\rangle}
\newcommand{\row}{\rightarrow}

\newcommand{\low}{\leftarrow}
\newcommand{\lrow}{\longrightarrow}
\renewcommand{\leq}{\leqslant}
\renewcommand{\geq}{\geqslant}

\newcommand{\calm}{{\cal M}}
\newcommand{\hm}{\operatorname{H}_{\calm}}
\newcommand{\nichego}[1]{}

\newcommand{\ov}[1]{\overline{#1}}
\newcommand{\un}[1]{\underline{#1}}
\newcommand{\wt}[1]{\widetilde{#1}}

\newcommand{\smk}{{\mathbf{Sm}}_k}

\newcommand{\sln}[1]{\op{S}_{L-N}^{#1}}
\newcommand{\slnT}{\op{S}_{L-N}^{Tot}}
\newcommand{\tslnT}{\wt{\op{S}}_{L-N}^{Tot}}

\newcommand{\laz}{{\Bbb L}}
\newcommand{\cd}{{\cal D}}

\newcommand{\ck}{{\cal K}}

\newcommand{\ct}{{\cal T}}

\newcommand{\CH}{\operatorname{CH}}

\newcommand{\bub}{*=0{\bullet}}
\newcommand{\dub}{.}
\newcommand{\wub}{*=0{\circ}}

\newcommand{\shk}{{\cal SH}_{{\Bbb{A}}^1}(k)}

\newcommand{\dmk}{\op{DM}(k)}

\newcommand{\dmgmk}{\op{DM}_{gm}(k)}

\newcommand{\dmkF}[1]{\op{DM}(k;#1)}

\newcommand{\dmISF}[2]{\op{DM}({#1}/{#1};{#2})}
\newcommand{\dmEF}[2]{\op{DM}({#1};{#2})}

\newcommand{\km}{k_M}
\newcommand{\moco}[2]{\op{H}_{{\cal M}}^{#1,#2}}
\newcommand{\mocod}[1]{\op{H}_{{\cal M}}^{\{#1\}}}
\newcommand{\moho}[2]{\op{H}^{{\cal M}}_{#1,#2}}
\newcommand{\mohod}[1]{\op{H}^{{\cal M}}_{\{#1\}}}

\newcommand{\HH}{\op{H}}

\newcommand{\Qed}{\hfill$\square$\smallskip}
\newcommand{\Red}{\hfill$\triangle$\smallskip}

\newenvironment{proof}{\noindent{\it Proof}:}{\vskip 5mm}

\newtheorem{prop}{Proposition}[section]{\bf}{\it}
\newtheorem{thm}[prop]{Theorem}{\bf}{\it}
\newtheorem{lem}[prop]{Lemma}{\bf}{\it}
{\bf}{\it}
\newtheorem{defi}[prop]{Definition}{\bf}{\it}
\newtheorem{conj}[prop]{Conjecture}{\bf}{\it}
\newtheorem{observ}[prop]{Observation}{\bf}{\it}
\newtheorem{exa}[prop]{Example}{\bf}{\it}
\newtheorem{rem}[prop]{Remark}{\bf}{}
{\bf}{\it}

{\bf}{\it}
{\bf}{\it}
\newtheorem{cor}[prop]{Corollary}{\bf}{\it}
{\bf}{\it}

\begin{document}

\title{Torsion motives}
\author{Alexander Vishik}
\date{}
\maketitle


\begin{abstract}
 In this paper we study Chow motives whose identity map is killed by a natural number. Examples of such objects were
 constructed by Gorchinskiy-Orlov \cite{GO}. We introduce various invariants of torsion motives, in particular, the
 {\it $p$-level}. We show that this invariant bounds from below the dimension of the variety a torsion motive $M$ is a direct summand of and imposes restrictions on motivic and singular cohomology of $M$. 
 We study in more details the $p$-torsion motives of surfaces,
 in particular, the Godeaux torsion motive. We show that
 such motives are in $1$-to-$1$ correspondence with certain
 Rost cycle submodules of free modules over $H^*_{et}$.
 This description is parallel to that of mod-$p$ reduced
 motives of curves. 
\end{abstract}

\section{Introduction}

The purpose of this article is to initiate the investigation
of an interisting class of Chow motives which, for some reason, didn't attract attention they deserve before. These
are the so-called {\it torsion motives}, that is, Chow 
motives which disappear with rational coefficients. 

For me, the motivation for studying such objects comes from
{\it isotropic realisation functors} of \cite{IM}.
Such realizations $\psi_E:\dmkF{\ff_p}\row\dmISF{\wt{E}}{\ff_p}$
are parameterized by the finitely-generated field extensions
$E/k$ of the ground field and
take values in the {\it isotropic motivic categories}.
Here $\wt{E}=E(t_1,t_2,\ldots)$ is the {\it flexible closure}
of $E$, and isotropic motivic category
$\dmISF{F}{\ff_p}$ is obtained from $\dmEF{F}{\ff_p}$ by
moding out the motives of $p$-anisotropic varieties over $F$.
It appears that, for {\it flexible} fields \cite[1.2]{IM}, the isotropic
motivic categories are sufficiently small, with Hom groups
between compact objects expected to be finite. This permits
to assign to an object of the ``global'' Voevodsky motivic 
category
with finite coefficients its ``local'' much simpler versions. 
At the same time, this collection of isotropic realization 
functors (together with the, so-called, {\it thick} versions
of them - see \cite[Sect. 5]{IM}) is not entirely 
conservative. And the expectation is that the kernel of 
this family on motives with $\zz_p$-coefficients 
is generated exactly by {\it torsion motives}.
Thus, understanding them is essential for completing the 
``isotropic picture''. 

It is not a'priori obvious that torsion motives should
even exist. But examples of such objects were constructed by
Gorchinskiy-Orlov \cite{GO} as direct summands in the motives
of Godeaux, Beauville and Burniat surfaces (based on earlier
works by Alexeev-Orlov \cite{AO}, B\"{o}hning-Graf von Bothmer-Sosna \cite{BGS} and Galkin-Shinder \cite{GS}).
Gorchiskiy-Orlov used these motives to construct the first
known example of the {\it phantom category}.

We start our investigation by assigning certain invariants
to torsion motives, which measure the complexity of such objects. Namely, by the results of \cite{VY}, any Chow 
motive $M$ can be lifted to a cobordism motive $M^{\Omega}$
and so (by the
universality of algebraic cobordism of Levine-Morel -\cite{LM}, we assume $char(k)=0$), to a motive $M^A$ in the sense of an arbitrary
oriented theory $A^*$. The annihilator of the cobordism
version provides an ideal in the Lazard ring, whose radical
is the intersection of finitely many ideals $I(p,n)$,
where $p$ is prime and $I(p,n)=(p,v_1,\ldots,v_{n-1})$
are invariant prime ideals of Landweber \cite{La73b}
($v_i$ is a $\nu_i$-element of dimension $p^i-1$). 
This leads to the notion
of a {\it $p$-level} of a motive - see 
Definition \ref{level} -
the number $n$ appearing above ($\infty$, if $p$ is not a torsion prime, and zero, if $M$ is not a torsion motive). This invariant can be
interpreted in terms of Morava K-theory versions of $M$.
Namely, if $M$ has $p$-level $n$, then $M^{K(r)}=0$, for
$0\leq r<n$, and $M^{K(r)}\neq 0$, for $r\geq n$ - see 
Corollary \ref{In-Morava}. Since $K(1)$ is just the usual
K-theory, localized at $p$ and re-oriented (in the sense of \cite{Qu71}, or \cite{PS}), it follows that
the motives of $p$-level $>1$ are exactly the {\it K-phantom motives}, that is, such Chow motives, whose K-version
is trivial. The level appears to be a rather rigid invariant - it is stable in tensor powers: the $p$-level of $M$ is the same as that of $M^{\otimes r}$. 

The $p$-level imposes various restrictions on the motive. In particular, on the dimension of the respective smooth projective variety $X$ (whose motive $M$ is a direct summand of).
In Theorem \ref{level-size} we show that, for a motive
of $p$-level $n$, such dimension is 
$\displaystyle\geq\frac{p^{n}-1}{p-1}$. Moreover, over
any field extension $F/k$, our motive has no Chow groups
in co-dimensions smaller than the specified bound - see 
Proposition \ref{CH-small}. These results are obtained 
using {\it Symmetric operations} of \cite{SOpSt}.
We also show that if, for a motive of $p$-level $n$, the $\displaystyle\ddim(X)\leq
\frac{(2p-1)(p^{n-1}-1)}{p-1}$, then the associated ideal
$\ov{J}(M)$ (Definition \ref{JM}) is radical. This implies -
Corollary \ref{torsion-M-Margolis} that Milnor's operations
$Q_i$, $0\leq i< n$ of Voevodsky act as exact 
differentials on the motivic cohomology of $M$ (that is, $\op{Im}(Q_i)=\op{Ker}(Q_i)$). 
This shows, in particular, that over an algebraically closed
field, the etale cohomology of $M$ (with finite coefficients) 
are absent in dimensions and co-dimensions $<n$. That is,
the motive $M$ can't reside ``too close to the surface''
of the motive of the respective variety, which suggests that
the bound on the possible dimension of $X$ may be improved.
This is indeed done in Corollary \ref{sharper-m-tri}: for
$n\leq 4$ and algebraically closed ground field,
$\displaystyle\ddim(X)\geq 
 \lceil\frac{n}{2}\rceil+\frac{p^{n}-1}{p-1}$. We conjecture that the same holds in general. 
 
 In the second part of the paper we take a more detailed look at
 the torsion motives of surfaces. We start with the torsion direct summand in the motive of the Godeaux surface. 
 We show that (as for any torsion motive of a surface), such
 a motive $M$ is completely described by the zero-slice of it in
 the homotopy $t$-structure \cite{Deg}. This slice corresponds to the Rost cycle module $A$ which is naturally
 a submodule of a free module over $\HH^*_{et}=\HH^*_{et}(-,\zz/5)$ of rank two with generators in degrees $-1$ and $-2$. The component
 of degree zero of $A$ is exactly $\CH_0(M)$ - the group of
 zero-cycles on $M$ (considered over all field extensions
 $F/\cc$). The latter group can be described as the group
 of $\HH^*_{et}(F)$-relations between two unramified elements
 $u\in \HH^1_{nr}(\cc(X)/\cc,\zz/5)$ and
 $v\in \HH^2_{nr}(\cc(X)/\cc,\zz/5)$ in $\HH^3_{et}(F(X),\zz/5)$.
 Among such relations there is the generic one: $(v,u)$, corresponding to $F=\cc(X)$. We prove that this element of $A^0$ generates $A$ as a Rost cycle module - Proposition
 \ref{A-gen}. The mentioned
 free module $\HH^*_{et}\la-1\ra\oplus \HH^*_{et}\la -2\ra$
 possesses a natural bilinear form (which is an extension
 of the standard hyperbolic form on $\ff_5\la -1\ra\oplus
 \ff_5\la -2\ra$ into the skew-commutative realm of 
 $\HH^*_{et}$). We show that our submodule $A$ is {\it Lagrangian} with respect to this form - 
 Proposition \ref{A-max-ort}. Finally, we express
 the diagonal class of the Godeaux surface in terms of the
 classes $u$ and $v$.
 
 We prove that torsion motives of surfaces satisfy the Krull-Schmidt principle - see Theorem \ref{K-S}. We look specifically at $p$-torsion motives over an algebraically closed field. 
 Assigning to $M$ the zero-th $t$-homotopic slices of $M$
 and $M^{\vee}$ we obtain a pair $A$ and $B$ of Rost cycle modules which are naturally submodules of free $\HH^*_{et}$ cycle modules $H_A$ and $H_B$ with generators in degrees
 $-1$ and $-2$. There is a natural ($\HH^*_{et}$-valued) pairing between $H_A$ and $H_B$, such that $A$ and $B$
 are orthogonal to each other with respect to it. 
 As in the case of the Godeaux motive above, we can describe
 the generators of $A$ and $B$ - Proposition 
 \ref{generators-A}. In particular, this gives the description of $\CH_0(M)$. The above assignment provides a
 $1$-to-$1$ correspondence between $p$-torsion direct summands in the motives of surfaces and pairs of submodules of dual free
 $\HH^*_{et}$-modules with generators in degrees $-1$ and $-2$, satisfying certain conditions - see Theorem 
 \ref{tor-sur-oto}. This description is completely parallel to the description of direct summands
 of the reduced motives of curves with $\zz/p$-coefficients
 (Theorem \ref{curv}). The only difference is that, in the 
 latter case, the respective dual free modules have generators in degree $-1$ only (and duality is shifted accordingly). The abundance of mod-$p$ motives of curves
 suggests that a similar situation should take place in the 
 case of $p$-torsion motives of surfaces. 
 We show that any non-zero $p$-torsion motive of surfaces has a non-trivial Picard group $\CH^1(M_{\kbar})$ over an algebraic closure 
 - Corollary \ref{Pic-nontriv}. 
 Finally, we describe the automorphism group of a torsion
 motive in terms of the respective embedding 
 $A\subset H_A$ - Proposition \ref{RA-Mat-nilp}.

 \medskip

\noindent
{\bf Acknowledgements:}
I would like to thank Mikhail Bondarko and Burt Totaro for useful comments.
The support of the EPSRC standard grant EP/T012625/1 is gratefully acknowledged.
Finally, I would like to thank the Referees for many useful remarks which improved the text.

\section{Torsion motives and their invariants}
\label{section-two}

Throughout the article we will consider only fields of characteristic zero (the exception is Subsection \ref{subsection-three-one}, where results hold over an arbitrary field).  
In Sections \ref{section-two}, \ref{section-three} (aside from \ref{subsection-three-one}) and Section \ref{section-five} our ground field will be any field of characteristic zero.
In Section \ref{section-four} we will work mostly over an algebraically closed field of characteristic zero, respectively $\cc$, but some results will still be proven for an arbitrary field of characteristic zero.

Recall that to any oriented cohomology theory one can assign
a formal group law. Among such theories there is a universal
one - the algebraic cobordism of Levine-Morel $\Omega^*$
\cite[Theorem 1.2.6]{LM}. The formal group law, in this case,
is also universal. In particular, 
$\Omega^*(\op{Spec}(k))=\laz$ - the Lazard ring. To any oriented cohomology theory $A^*$, one can assign its graded version $GrA^*$ corresponding to the
co-dimension of support filtration $F_r$ on $A^*$, where an element belongs to $F_r$, if it vanishes on a complement to some closed subscheme
of co-dimension $r$. In particular, below we will use such a graded version
$Gr\Omega^*$ for the algebraic cobordism. We have the natural surjection
of $\laz$-algebras $\phi:\CH^*\otimes_{\zz}\laz\twoheadrightarrow Gr\Omega^*$ - \cite[Corollary 4.5.8]{LM} which is an isomorphism rationally.

To any oriented cohomology theory $A^*$ one can assign the category
of $A^*$-Chow motives $Chow^A(k)$ - see, for example, \cite[Sect. 2]{VY}. Any morphism
of oriented cohomology theories $A^*\row B^*$ gives the functor
$Chow^A(k)\row Chow^B(k)$.  In particular, the natural projection $\Omega^*\row\CH^*$ from the algebraic cobordism of Levine-Morel to Chow groups leads to the functor $Chow^{\Omega}(k)\row Chow^{CH}(k)=Chow(k)$.

By \cite[Corollary 2.8]{VY}, the mentioned functor defines a 1-to-1 correspondence between isomorphism classes of Chow-motives and $\Omega^*$-motives. Let $M$ be an object of $Chow(k)$ (with integral coefficients) and $M^{\Omega}\in Ob(Chow^{\Omega}(k))$ be the respective $\Omega^*$-motive. This permits to assign the following invariant to $M$.

\begin{defi}
 \label{JM}
 Let $M\in Ob(Chow(k))$. Define $J(M)\vartriangleleft\laz$
 as the annihilator of $id_{M^{\Omega}}$. In other words, it is the annihilator of the projector $\rho^{\Omega}$ in $\Omega^*(X^{\times 2})$, where $M^{\Omega}=(X,\rho^{\Omega})$.\\
 Define $\ov{J}(M)\vartriangleleft\laz$ as the annihilator of 
 $\rho^{\Omega}$ in $Gr\Omega^*(X^{\times 2})$. 
\end{defi}

Obviously, $J(M)\subset\ov{J}(M)$.
Recall that on the algebraic cobordism of Levine-Morel 
$\Omega^*$ we have the action of Landweber-Novikov operations
\cite[Example 4.1.25]{LM}. The total Landweber-Novikov operation
$\slnT:\Omega^*\row\Omega^*[b_1,b_2,\ldots]$ is multiplicative (respects the product). These operations descend to the graded algebraic cobordism.
Moreover, the action on $Gr\Omega^*$ is much simpler. Namely, if $z\in\CH(X)$, $u\in\laz$ and we denote $\phi(z\otimes u)$ as 
$z\cdot u\in Gr\Omega^*(X)$, then $\slnT(z\cdot u)=z\cdot\slnT(u)$.
Considering $z=\rho\in\CH_{\ddim(X)}(X^{\times 2})$, we obtain:

\begin{observ}
 \label{ovJ-LN}
 The ideal $\ov{J}(M)$ is stable under Landweber-Novikov operations.
\end{observ}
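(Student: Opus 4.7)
The plan is essentially a one-line application of the formula $\slnT(z\cdot u)=z\cdot\slnT(u)$ that was recalled in the paragraph just above the Observation, specialized to the case $z=\rho$. The author's phrasing ``Considering $z=\rho\in\CH_{\ddim(X)}(X^{\times 2})$, we obtain:'' immediately before the statement is itself a hint that this is all there is to the argument; I would simply verify the bookkeeping explicitly.

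Concretely, take $u\in\ov{J}(M)\subset\laz$. By Definition \ref{JM}, this means $\rho\cdot u=0$ in $Gr\Omega^*(X^{\times 2})$, where $\rho\cdot u$ denotes $\phi(\rho\otimes u)$ and $\rho\in\CH_{\ddim(X)}(X^{\times 2})$ is the underlying Chow class of the projector $\rho^{\Omega}$. Since the total Landweber--Novikov operation descends to $Gr\Omega^*$ and the formula $\slnT(z\cdot u)=z\cdot\slnT(u)$ holds there, applying $\slnT$ to both sides of $\rho\cdot u=0$ yields
\[
0 \;=\; \slnT(\rho\cdot u) \;=\; \rho\cdot\slnT(u) \;\in\; Gr\Omega^*(X^{\times 2})[b_1,b_2,\ldots].
\]
Reading off the coefficient of each monomial $b^{\alpha}$ in the formal variables $b_i$ gives $\rho\cdot\sln{\alpha}(u)=0$ in $Gr\Omega^*(X^{\times 2})$, i.e.\ $\sln{\alpha}(u)\in\ov{J}(M)$ for every component operation. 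Since the $\sln{\alpha}$ generate the Landweber--Novikov algebra acting on $\laz$, this proves that $\ov{J}(M)$ is stable under all Landweber--Novikov operations.

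There is essentially no obstacle here: the whole content of the argument is packaged into the preceding paragraph's assertion that, through the surjection $\phi\colon\CH^{*}\otimes_{\zz}\laz\twoheadrightarrow Gr\Omega^*$, the total Landweber--Novikov operation acts only on the $\laz$-factor. The one subtlety worth flagging is that the analogous statement for the ungraded ideal $J(M)\vartriangleleft\laz$ would be considerably harder, because on $\Omega^*$ itself the operation $\slnT$ enjoys no such simple ``leave the Chow part alone'' formula; this is precisely why the Observation is formulated for $\ov{J}(M)$ rather than for $J(M)$.
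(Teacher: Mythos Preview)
Your argument is correct and is exactly the approach the paper takes: the sentence ``Considering $z=\rho\in\CH_{\ddim(X)}(X^{\times 2})$, we obtain:'' preceding the Observation \emph{is} the paper's entire proof, and you have simply written out the coefficient-extraction step that is implicit there. Your closing remark about why the analogous claim for $J(M)$ is harder is also apt and consistent with the paper's later handling of $\wt{J}(M)$ and $J(M)$.
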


\begin{rem}
Clearly, under field extensions, these ideals can only increase. So, these invariants are trivial for geometrically split motives, and even for motives containing any split components over some extensions. In particular, for (the whole) motives of varieties. 
\Red
\end{rem}

\begin{defi}
 \label{TM}
 A Chow motive $M$ is a 'torsion motive', if there exists $n\in\nn$ such that $n\cdot id_M=0$.
\end{defi}

\begin{prop}
 \label{torsion-JM}
 The following conditions are equivalent: 
 \begin{itemize}
  \item[$(1)$] $M$ is a torsion motive; 
  \item[$(2)$] $J(M)\neq 0$.
 \end{itemize}
\end{prop}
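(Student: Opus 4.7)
The plan is to argue both directions via extension of scalars to $\qq$, relying on three ingredients: the Lazard ring $\laz=\zz[x_1,x_2,\ldots]$ is torsion-free; for $Y$ smooth projective the rational Levine--Morel isomorphism $\Omega^*(Y)\otimes\qq\cong\CH^*(Y)\otimes\laz\otimes\qq$ makes $\Omega^*(Y)\otimes\qq$ a free $\laz\otimes\qq$-module; and the $1$-to-$1$ correspondence \cite[Corollary 2.8]{VY} applied with $\qq$-coefficients.

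For $(2)\Rightarrow(1)$ I would pick any $0\neq u\in J(M)$ and tensor the relation $u\cdot\rho^{\Omega}=0$ with $\qq$, obtaining $(u\otimes 1)(\rho^{\Omega}\otimes 1)=0$ inside the free $\laz\otimes\qq$-module $\Omega^*(X^{\times 2})\otimes\qq$. Freeness over the integral domain $\laz\otimes\qq$ forces one of the two factors to vanish; torsion-freeness of $\laz$ rules out $u\otimes 1=0$, so $\rho^{\Omega}\otimes 1=0$. Projecting via $\Omega^*\row\CH^*$ then gives $\rho\otimes 1=0$ in $\CH^*(X^{\times 2})\otimes\qq$, so $\rho$ is integrally torsion, and some positive integer kills $\mathrm{id}_M$.

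For $(1)\Rightarrow(2)$, assume $n\cdot\mathrm{id}_M=0$. Then $M\otimes\qq\cong 0$ in $Chow(k;\qq)$; both $M^{\Omega}\otimes\qq$ and the zero motive are $Chow^{\Omega}(k;\qq)$-lifts of it, so by the $\qq$-version of \cite[Corollary 2.8]{VY} they are isomorphic, i.e.\ $\rho^{\Omega}\otimes 1=0$ in $\Omega^*(X^{\times 2})\otimes\qq$. Hence $m\cdot\rho^{\Omega}=0$ for some $m\in\nn$, and via the inclusion $\zz\hookrightarrow\laz$ this yields a nonzero element of $J(M)$.

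The main obstacle is the use of \cite[Corollary 2.8]{VY} with $\qq$-coefficients: the cited statement is integral, and one needs the $\Omega^*$-lift to be natural in the coefficient ring so that uniqueness descends to $\qq$. A naive integral substitute would not work, since the kernel of $\Omega^*(Y)\row\CH^*(Y)$ is the much larger ideal $\laz^{>0}\cdot\Omega^*(Y)$, so the relation $n\rho=0$ in $\CH^*$ does not by itself force $\rho^{\Omega}$ to be torsion in $\Omega^*$; the projector property of $\rho^{\Omega}$, encoded by uniqueness of lifts, is essential.
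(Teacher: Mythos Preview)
Your argument is correct and complete for $(2)\Rightarrow(1)$, and essentially complete for $(1)\Rightarrow(2)$; but the route is genuinely different from the paper's.

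For $(2)\Rightarrow(1)$ the paper does not pass to $\qq$-coefficients at all. Instead it uses Landweber--Novikov operations: from $u\cdot\rho^{\Omega}=0$ one deduces that $\sln{\ov{b}}(u)\cdot\rho^{\Omega}$ has support in codimension $>\ddim(X)$, hence is $\circ$-nilpotent, so some power of $\sln{\ov{b}}(u)$ lies in $J(M)$; choosing $\ov{b}$ so that $\sln{\ov{b}}(u)$ lands in $\laz_0=\zz$ produces a nonzero integer in $J(M)$. Your freeness argument over the domain $\laz\otimes\qq$ is shorter and avoids the operation machinery, while the paper's approach yields the slightly sharper conclusion that $J(M)$ itself contains nonzero integers and, more importantly, sets up the L--N apparatus used immediately afterwards (Observation~\ref{ovJ-LN}, Propositions~\ref{JMt-LNinv} and~\ref{rJMt-Ipn}).

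For $(1)\Rightarrow(2)$ the paper argues directly: $n\rho=0$ means $n\rho^{\Omega}$ lies in $F_{\ddim(X)+1}\Omega^{\ddim(X)}(X^{\times 2})$, which is a $\circ$-nilpotent ideal; since $\rho^{\Omega}$ is idempotent, some power of $n$ annihilates it. Your route via the $\qq$-version of \cite[Corollary 2.8]{VY} is valid --- that result goes through verbatim with any coefficient ring, since its proof is precisely the filtration/nilpotence argument just described --- so the ``obstacle'' you flag dissolves on inspection. In effect, unpacking the black box you invoke recovers the paper's direct proof of this implication.
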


\begin{proof} Suppose $M=(X,\rho)$.
 $(1)\row (2)$: If $n\cdot\rho=0$, then $n\cdot\rho^{\Omega}$ has support in codimension $>\ddim(X)$. Hence, it is $\circ$-nilpotent. But $\rho^{\Omega}$ is a projector. Hence, some
 power of $n$ belongs to $J(M)$.
 
 $(2)\row(1)$: Recall, that the total Landweber-Novikov operation acts on $Gr\Omega^*(X^{\times 2})$ as follows: for $z\in\CH(X^{\times 2})$ and 
 $u\in\laz$, we have:
 $\slnT(z\cdot u)=z\cdot\slnT(u)$.
 So, if $u\in J(M)$, then for any individual Landweber-Novikov operation $\sln{\ov{b}}$, the product
 $\sln{\ov{b}}(u)\cdot\rho^{\Omega}$ has support in co-dimension $>\ddim(X)$, so is nilpotent. Thus, some power
 of $\sln{\ov{b}}(u)$ belongs to $J(M)$. But $u\neq 0\Leftrightarrow (\slnT(u))_{deg=\ddim(u)}\neq 0$ and the latter elements reside in $\laz_{\ddim=0}=\zz$, which has
 no nilpotents. Hence, if $J(M)\neq 0$, it contains some non-zero integers.
 \Qed
\end{proof}

We also can bound $J(M)$ by an invariant ideal from below.

\begin{defi}
 \label{JMtilde}
 Let $M\in Ob(Chow(k))$. Define 
 $\wt{J}(M)\vartriangleleft\laz$
 as the annihilator of $\slnT(\rho^{\Omega})$ in $\Omega^*(X^{\times 2})$, where $M^{\Omega}=(X,\rho^{\Omega})$.
\end{defi}

\begin{prop}
 \label{correct-JMt}
 The ideal $\wt{J}(M)$ is well-defined, that is, it does not depend on the presentation of $M$ in the form $(X,\rho)$.
\end{prop}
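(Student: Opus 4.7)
The plan is to show that if $(X,\rho)$ and $(Y,\sigma)$ are two presentations of the same $\Omega^{*}$-motive $M^{\Omega}$, then $\slnT(\rho)$ and $\slnT(\sigma)$ have the same annihilator in $\laz$. By \cite[Corollary 2.8]{VY} the $\Omega^{*}$-lift of a Chow motive is canonical up to isomorphism, so any two such presentations are connected by an isomorphism in $Chow^{\Omega}(k)$, and it suffices to verify that $\op{Ann}_{\laz}(\slnT(-))$ is preserved under such an isomorphism.

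Concretely, I fix correspondences $\alpha\in\Omega^{*}(X\times Y)$ and $\beta\in\Omega^{*}(Y\times X)$ realising the isomorphism, so that $\beta\circ\alpha=\rho$ and $\alpha\circ\beta=\sigma$. Then
$\sigma=\sigma\circ\sigma=\alpha\circ(\beta\circ\alpha)\circ\beta=\alpha\circ\rho\circ\beta$,
and unfolding this composition on $Y\times X\times X\times Y$ yields $\sigma=p_{*}\bigl(q_{12}^{*}(\beta)\cdot q_{23}^{*}(\rho)\cdot q_{34}^{*}(\alpha)\bigr)$, where $p$ is the projection onto the outer factors $Y\times Y$ and the $q_{ij}$ are the projections to the indicated pairs of factors.

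I next apply $\slnT$. By \cite[Example 4.1.25]{LM} the total Landweber--Novikov operation is multiplicative, commutes with pullbacks, and on pushforwards along a projective morphism satisfies a Riemann--Roch type formula $\slnT(p_{*}(x))=p_{*}(\slnT(x)\cdot\mu(T_{p}))$, with $\mu(T_{p})$ a fixed characteristic class of the relative tangent bundle $T_{p}$ depending only on $p$. Combining these properties,
$\slnT(\sigma)=p_{*}\bigl(q_{12}^{*}(\slnT(\beta))\cdot q_{23}^{*}(\slnT(\rho))\cdot q_{34}^{*}(\slnT(\alpha))\cdot\mu(T_{p})\bigr)$.
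For any $u\in\laz$, multiplication by $u$ commutes with pullbacks (ring homomorphisms) and with pushforwards (by the projection formula, since $u$ is pulled back from $\spec(k)$). Therefore $u\cdot\slnT(\rho)=0$ forces $q_{23}^{*}(u\cdot\slnT(\rho))=u\cdot q_{23}^{*}(\slnT(\rho))=0$, and inserting this under the pushforward above gives $u\cdot\slnT(\sigma)=0$. This yields $\op{Ann}_{\laz}(\slnT(\rho))\subseteq\op{Ann}_{\laz}(\slnT(\sigma))$, and the reverse inclusion follows by symmetry.

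The main technical ingredient is the invocation of a Riemann--Roch formula for $\slnT$ in a form where the correction $\mu(T_{p})$ is a fixed characteristic class independent of $\rho$; once that is in hand the argument is purely formal, relying on $\laz$-linearity of pullback, pushforward, and multiplication by a fixed cobordism class. An equivalent, more conceptual packaging is to view $\slnT$ as the functor $Chow^{\Omega}(k)\to Chow^{\Omega[b_{1},b_{2},\ldots]}(k)$ induced by the reorientation of $\Omega^{*}$; this functor transports isomorphism classes and so makes the well-definedness of $\wt{J}(M)$ manifest.
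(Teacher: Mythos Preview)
Your proof is correct and follows essentially the same route as the paper. The paper packages the Riemann--Roch step more cleanly by introducing the twisted total operation $\tslnT(\alpha):=\slnT(\alpha)\cdot\op{S}^{L-N}_{Tot}(1_Y)$, which absorbs the Todd-type correction $\mu(T_p)$ and therefore respects composition of correspondences on the nose; this is precisely the reorientation functor you allude to in your final paragraph, so the two arguments coincide once unwound.
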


\begin{proof}
 For an $\Omega^*$-correspondence 
 $\alpha:X\rightsquigarrow Y$, let us introduce
 $\tslnT(\alpha):=\slnT(\alpha)\cdot\op{S}^{L-N}_{Tot}(1_Y)$.
 It follows from the Riemann-Roch theorem (for multiplicative operations) - Panin \cite[Theorem 2.5.4]{P-RR} that
 it respects composition of correspondences:
 $\tslnT(\beta\circ\alpha)=\tslnT(\beta)\circ\tslnT(\alpha)$.
 
 Let $(X,\rho)$ and $(Y,\eps)$ be two presentations for $M^{\Omega}$ (from now on we will drop the superscript $\Omega$ in the projectors). Then there are $\Omega^*$-correspondences
 $f:X\rightsquigarrow Y$ and $g:Y\rightsquigarrow X$ such
 that $\rho=g\circ\eps\circ f$ and $\eps=f\circ\rho\circ g$, and so, $\tslnT(\rho)=\tslnT(g)\circ\tslnT(\eps)\circ\tslnT(f)$, and similarly for $\eps$. Hence, $u\in\laz$ annihilates $\tslnT(\eps)$ if and only if it annihilates
 $\tslnT(\rho)$. But $\op{S}^{L-N}_{Tot}(1_Z)$ is invertible, so $u\cdot\slnT(\eps)=0\Leftrightarrow u\cdot\slnT(\rho)=0$.
 \Qed
\end{proof}

\begin{prop}
 \label{JMt-LNinv}
 The ideal $\wt{J}(M)$ is invariant under Landweber-Novikov operations.
\end{prop}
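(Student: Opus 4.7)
The plan is to unwind the definition and then run an induction. Since $\slnT(\rho)$ expands as $\sum_{\ov{f}}\sln{\ov{f}}(\rho)\cdot b^{\ov{f}}$ in $\Omega^*(X^{\times 2})[b_1,b_2,\ldots]$, the condition $u\in\wt{J}(M)$ is equivalent to $u\cdot\sln{\ov{f}}(\rho)=0$ for every multi-index $\ov{f}$. So Landweber-Novikov invariance of $\wt{J}(M)$ reduces to the claim that, for every $u\in\wt{J}(M)$ and every multi-index $\ov{e}$, one has $\sln{\ov{e}}(u)\cdot\sln{\ov{f}}(\rho)=0$ for all $\ov{f}$, i.e.\ $\sln{\ov{e}}(u)\in\wt{J}(M)$.

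I would proceed by induction on the degree $|\ov{e}|$, with the base case $\ov{e}=0$ tautological. For the inductive step, apply $\sln{\ov{e}}$ to the vanishing $u\cdot\sln{\ov{f}}(\rho)=0$: reading off the $b^{\ov{e}}$-coefficient of the multiplicativity identity $\slnT(u\cdot\sln{\ov{f}}(\rho))=\slnT(u)\cdot\slnT(\sln{\ov{f}}(\rho))$ produces the Cartan-type relation
$$0=\sum_{\ov{e}_1+\ov{e}_2=\ov{e}}\sln{\ov{e}_1}(u)\cdot\sln{\ov{e}_2}(\sln{\ov{f}}(\rho)).$$
Isolating the $(\ov{e}_1,\ov{e}_2)=(\ov{e},0)$ term expresses $\sln{\ov{e}}(u)\cdot\sln{\ov{f}}(\rho)$ as the negative of a sum of terms $\sln{\ov{e}_1}(u)\cdot\sln{\ov{e}_2}(\sln{\ov{f}}(\rho))$ in which $|\ov{e}_1|<|\ov{e}|$; the inductive hypothesis then gives $\sln{\ov{e}_1}(u)\in\wt{J}(M)$.

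To kill the residual summands I would invoke the Hopf-algebroid structure on Landweber-Novikov operations: the coassociativity identity---iterating $\slnT$ with two disjoint sets of formal variables $b$ and $c$ agrees with $\slnT$ evaluated at an explicit substitution $\psi(b,c)\in\laz[b,c]$ coming from the L-N comultiplication---allows one to rewrite the iterated composition $\sln{\ov{e}_2}\circ\sln{\ov{f}}$ as an $\laz$-linear combination $\sum_{\ov{g}}\mu_{\ov{g}}^{\ov{e}_2,\ov{f}}\sln{\ov{g}}$. Applying this to $\rho$ and multiplying by $\sln{\ov{e}_1}(u)$ gives a sum of terms $\mu_{\ov{g}}\cdot\sln{\ov{e}_1}(u)\cdot\sln{\ov{g}}(\rho)$, each vanishing because $\sln{\ov{e}_1}(u)\in\wt{J}(M)$ annihilates every $\sln{\ov{g}}(\rho)$. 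This closes the induction. The real subtlety is precisely this structural step---reducing iterated L-N to a $\laz$-combination of single L-N operations via coassociativity---while everything else is formal bookkeeping from the multiplicativity of $\slnT$.
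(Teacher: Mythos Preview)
Your proof is correct and follows the same route as the paper's: multiplicativity of $\slnT$ plus induction on the degree of the monomial $\ov{b}^{\ov{r}}$. The paper's argument is a two-line sketch that suppresses exactly the point you flag---that closing the induction requires knowing $\sln{\ov{e}_2}(\sln{\ov{f}}(\rho))$ lies in the $\laz$-span of the $\sln{\ov{g}}(\rho)$'s, which is the Hopf-algebroid composition identity you invoke. So your write-up is a faithful and more explicit expansion of the paper's proof; nothing genuinely different is happening.
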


\begin{proof}
 The total Landweber-Novikov operation is multiplicative, so: $\slnT(x\cdot u)=\slnT(x)\cdot\slnT(u)$.
 This implies by induction on the degree of the monomial
 $\ov{b}^{\ov{r}}$ that
 $\slnT(\rho)\cdot\sln{\ov{b}^{\ov{r}}}(u)=0$, if
 $\slnT(\rho)\cdot u=0$. 
 \phantom{a}\hspace{5mm}
 \Qed
\end{proof}

The following result shows that the ideals $J(M)$, $\ov{J}(M)$ and $\wt{J}(M)$ have the same radical.

\begin{prop}
 \label{JM-JMt}
 $\wt{J}(M)\subset J(M)\subset\ov{J}(M)\subset\sqrt{\wt{J}(M)}$.
\end{prop}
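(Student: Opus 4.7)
My plan is to establish the three inclusions separately, in order of increasing difficulty. The first two are essentially tautological. For $\wt{J}(M)\subset J(M)$: since $\slnT(\rho^\Omega)=\rho^\Omega+\sum_{\ov{r}\ne 0}b^{\ov{r}}\cdot\sln{\ov{r}}(\rho^\Omega)$ is a series in the formal variables $b_1,b_2,\ldots$ whose constant term is $\rho^\Omega$, any $u\in\laz$ annihilating $\slnT(\rho^\Omega)$ already kills the constant term. For $J(M)\subset\ov{J}(M)$: the image of $u\cdot\rho^\Omega=0$ under the natural map $\Omega^*(X^{\times 2})\to Gr\Omega^*(X^{\times 2})$ is also zero, as was already observed after Definition \ref{JM}.

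The substantive inclusion is $\ov{J}(M)\subset\sqrt{\wt{J}(M)}$. Fix $u\in\ov{J}(M)$. I first establish that some power of $u$ lies in $J(M)$, repeating the argument of Proposition \ref{torsion-JM}, implication $(1)\Rightarrow(2)$. By hypothesis $u\cdot\rho^\Omega$ vanishes in $Gr\Omega^*$, so $u\cdot\rho^\Omega\in F_{\ddim(X)+1}$ for the codimension-of-support filtration on $\Omega^*(X^{\times 2})$. Composition of correspondences sends $F_a\circ F_b$ into $F_{a+b-\ddim(X)}$: intersection inside $X^{\times 3}$ adds codimensions, while proper pushforward along $\pi_{13}\colon X^{\times 3}\to X^{\times 2}$ decreases codimension of support by at most $\ddim(X)$. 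Since $\rho^\Omega$ is a $\circ$-idempotent and $\laz$-multiplication commutes with $\circ$, $(u\cdot\rho^\Omega)^{\circ k}=u^k\cdot\rho^\Omega\in F_{\ddim(X)+k}$, which vanishes once $\ddim(X)+k>\dim(X^{\times 2})$; so $K:=\ddim(X)+1$ satisfies $u^K\cdot\rho^\Omega=0$.

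To promote this to a statement about $\slnT(\rho^\Omega)$, I note that this total operation is actually a finite sum: $\sln{\ov{r}}$ raises $\Omega^*$-degree by $|\ov{r}|$ and $\Omega^i(X^{\times 2})=0$ for $i>2\ddim(X)$, so $\sln{\ov{r}}(\rho^\Omega)=0$ whenever $|\ov{r}|>\ddim(X)$. It therefore suffices to produce, for each such $\ov{r}$, an exponent $N_{\ov{r}}$ with $u^{N_{\ov{r}}}\cdot\sln{\ov{r}}(\rho^\Omega)=0$; the maximum $N$ over this finite set then satisfies $u^N\cdot\slnT(\rho^\Omega)=0$, that is, $u^N\in\wt{J}(M)$.

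I construct the $N_{\ov{r}}$ by induction on $|\ov{r}|$, the base case $\ov{r}=0$ being handled by the $K$ above. For the inductive step, I apply $\sln{\ov{r}}$ to the identity $u^K\cdot\rho^\Omega=0$ and use multiplicativity of $\slnT$ to expand
\[
0=\sln{\ov{r}}(u^K\cdot\rho^\Omega)=u^K\cdot\sln{\ov{r}}(\rho^\Omega)+\sum_{\ov{r}_1+\ov{r}_2=\ov{r},\,\ov{r}_1\ne 0}\sln{\ov{r}_1}(u^K)\cdot\sln{\ov{r}_2}(\rho^\Omega).
\]
Every summand on the right has $|\ov{r}_2|<|\ov{r}|$, so by the inductive hypothesis some common $N_0$ kills every $\sln{\ov{r}_2}(\rho^\Omega)$ after multiplication by $u^{N_0}$; multiplying the display by $u^{N_0}$ annihilates the sum and leaves $u^{N_0+K}\cdot\sln{\ov{r}}(\rho^\Omega)=0$, so $N_{\ov{r}}:=N_0+K$ works. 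The only non-formal input is the filtration--composition estimate used in the base case; the induction that promotes it to all Landweber--Novikov components is purely algebraic.
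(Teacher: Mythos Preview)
Your argument is correct and follows essentially the same route as the paper. Both proofs split the nontrivial inclusion $\ov{J}(M)\subset\sqrt{\wt{J}(M)}$ into the same two steps: (i) pass from $\ov{J}(M)$ to $J(M)$ up to radical via $\circ$-nilpotence of correspondences supported in codimension $>\ddim(X)$, and (ii) pass from $J(M)$ to $\wt{J}(M)$ up to radical by an induction on the degree of the Landweber--Novikov component, using multiplicativity of $\slnT$. The paper treats these as two separate inclusions $\ov{J}(M)\subset\sqrt{J(M)}$ and $J(M)\subset\sqrt{\wt{J}(M)}$, while you compose them; and the paper's induction is organised by total degree $r$ of $(\slnT(\rho))_r$ rather than by the multi-index $\ov{r}$, but this is cosmetic. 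You are in fact slightly more careful than the paper in two places: you spell out the filtration estimate $F_a\circ F_b\subset F_{a+b-\ddim(X)}$ underlying the nilpotence (the paper just invokes ``co-dimensional considerations''), and you observe explicitly that only finitely many $\sln{\ov{r}}(\rho^\Omega)$ survive so that a single exponent $N$ suffices---a point the paper leaves implicit. One caution: your phrase ``intersection inside $X^{\times 3}$ adds codimensions'' is not literally true set-theoretically; what makes the filtration estimate work is the multiplicativity of the coniveau filtration on $\Omega^*$, which is a genuine (if standard) input.
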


\begin{proof}
 By definition, $\wt{J}(M)\subset J(M)\subset\ov{J}(M)$.
 
 Suppose $u\in J(M)$. Let us show by induction on $r$ that the degree $r$ part $(\slnT(\rho))_r$ of the total Landweber-Novikov operation applied to $\rho$ is annihilated by some power of $u$. The base of induction
 is provided by our condition $\rho\cdot u=0$ on $u$.
 Suppose, $(\slnT(\rho))_{<r}\cdot u^a=0$. Since
 $0=(\slnT(\rho\cdot u))_r=\left((\slnT(\rho))_{<r}\cdot
 (\slnT(u))_{>0}\right)_r+(\slnT(\rho))_r\cdot u$, we obtain that $(\slnT(\rho))_r\cdot u^{a+1}=0$.
 
 If $v\in\ov{J}(M)$, then $v\cdot\rho$ is nilpotent by co-dimensional considerations. But $\rho$ is an idempotent.
 Hence, some power of $v$ annihilates $\rho$ already in $\Omega^*$, and so, belongs to $J(M)$.
 \Qed
\end{proof}

Landweber-Novikov operations provide a rich structure on the Lazard ring. In particular, prime ideals of $\laz$ invariant under these operations can be classified. This was done by Landweber in \cite[Theorem 2.7]{La73b}.
Finitely generated non-zero invariant prime ideals of $\laz$ are exactly ideals
$I(p,n)=(p,v_1,\ldots,v_{n-1})$, where $p$ is prime and $v_i$ is a $\nu_i$-{\it element} of dimension $p^i-1$. Such ideals are parametrized by pairs $(p,n)$, where $p$ is prime and $n$ is a natural number.
In addition, the ideal $I(0)=(0)$ is also prime and invariant.
\begin{prop}
 \label{rJMt-Ipn} For a torsion motive $M$, 
 $$\sqrt{J(M)}=\sqrt{\ov{J}(M)}=\sqrt{\wt{J}(M)}=\operatornamewithlimits{\bigcap}_{(p,n)}I(p,n),
 $$
 where $(p,n)$ runs over finitely many distinct pairs.
\end{prop}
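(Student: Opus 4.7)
By Proposition \ref{JM-JMt}, the three ideals $J(M)$, $\ov{J}(M)$, and $\wt{J}(M)$ share a common radical, which I denote $R$. Since $M$ is torsion, the proof of Proposition \ref{torsion-JM} already exhibits a positive integer $N\in J(M)\subset R$; any prime $\mathfrak{p}\supset R$ therefore contains $N$, so in particular $\mathfrak{p}\neq(0)$. The task is to identify $R$ with a finite intersection of Landweber primes $I(p,n)$.

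First I would verify that $R$ is itself stable under Landweber--Novikov operations. The multiplicativity of $\slnT$ is the key: if $u\in R$, then $u^k\in\wt{J}(M)$ for some $k$, and $\slnT(u)^k=\slnT(u^k)$, so every coefficient of this polynomial lies in $\wt{J}(M)$. Hence $\slnT(u)$ is nilpotent in $(\laz/\wt{J}(M))[b_1,b_2,\ldots]$, which by the standard criterion for nilpotent polynomials forces each coefficient $\sln{\ov{b}}(u)$ to be nilpotent modulo $\wt{J}(M)$, i.e.\ to lie in $R$.

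Next I would show that every minimal prime $\mathfrak{p}$ over $R$ is LN-invariant. Define the LN-core $\mathfrak{p}':=\{a\in\laz\,:\,\slnT(a)\in\mathfrak{p}[b_1,b_2,\ldots]\}$. Since $\mathfrak{p}[b_1,b_2,\ldots]$ is prime in $\laz[b_1,b_2,\ldots]$ (the quotient being a polynomial ring over a domain) and $\slnT$ is multiplicative, $\mathfrak{p}'$ is prime. The LN-invariance of $R$ gives $R\subset\mathfrak{p}'\subset\mathfrak{p}$, and minimality of $\mathfrak{p}$ over $R$ forces $\mathfrak{p}=\mathfrak{p}'$. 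By Landweber's classification \cite[Theorem 2.7]{La73b}, $\mathfrak{p}$ is then of the form $I(p,n)$ for some prime $p$ and some $n\geq 1$, and $N\in\mathfrak{p}$ forces $p\mid N$.

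Finally, for each prime divisor $p$ of $N$ the invariant primes $I(p,1)\subset I(p,2)\subset\cdots$ form a chain, so at most one of them can be minimal over $R$. Hence there are only finitely many minimal primes over $R$, say $\{I(p_i,n_i)\}_{i=1}^{r}$, and because $R$ is radical, $R=\bigcap_i I(p_i,n_i)$. The main obstacle is the second step, the LN-invariance of minimal primes over an LN-invariant ideal; once this is in place, Landweber's classification together with the linear order within each $p$-family yields both the finiteness and the stated form.
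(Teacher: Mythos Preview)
Your argument is largely sound and follows a route distinct from the paper's. The paper works with $\ov{J}(M)$ directly: it cites \cite[proof of Theorem~4.1]{ACMLR} to conclude that $\ov{J}(M)$ is \emph{finitely generated}, combines this with LN-invariance (Observation~\ref{ovJ-LN}), and then applies Landweber's primary decomposition \cite[Proposition~3.4]{La73b} to write $\ov{J}(M)=Q_1\cap\cdots\cap Q_r$ with each $\sqrt{Q_i}$ a finitely generated invariant prime, hence some $I(p_i,n_i)$. You instead pass to the radical $R$, prove it LN-invariant via the nilpotent-polynomial criterion, and use the ``LN-core'' $\mathfrak{p}'=(\slnT)^{-1}\bigl(\mathfrak{p}[b_1,b_2,\ldots]\bigr)$ to show that every minimal prime over $R$ is itself invariant. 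This is a pleasant, more self-contained alternative that avoids the primary-decomposition machinery.

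There is, however, one genuine gap. Landweber's \cite[Theorem~2.7]{La73b} classifies \emph{all} nonzero invariant primes of $\laz$, and the list includes $I(p,\infty)=(p,v_1,v_2,\ldots)$ in addition to the finitely generated $I(p,n)$. Your chain argument still gives at most one minimal prime per $p\mid N$, so finiteness survives, but nothing you have written excludes the possibility that this minimal prime is $I(p,\infty)$; yet the proposition asserts each $n_i$ is finite. This is precisely where the paper's finite-generation input earns its keep: once $\ov{J}(M)$ is generated by finitely many homogeneous elements, any inclusion $\ov{J}(M)\subset I(p,\infty)$ automatically factors through some $I(p,m)$ with $m<\infty$, because a homogeneous element of degree $d$ lying in $I(p,\infty)$ already lies in $(p,v_1,\ldots,v_m)$ as soon as $p^m-1>d$ (the ring $\laz$ being non-negatively graded). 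Hence $R\subset I(p,m)$ and $I(p,\infty)$ cannot be minimal over $R$. Inserting this observation, together with the citation to \cite{ACMLR} for finite generation of $\ov{J}(M)$, closes the gap in your argument.
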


\begin{proof}
 It follows from \cite[proof of Theorem 4.1]{ACMLR} that
 $\ov{J}(M)$ is generated by elements $u$ such that the codimension of $\rho^{\Omega}\cdot u$ is positive. In particular, this ideal is finitely generated.
 By Observation \ref{ovJ-LN}, this ideal of $\laz$ is also invariant under Landweber-Novikov operations. By the Theorem of Landweber - \cite[Proposition 3.4]{La73b},
 $\ov{J}(M)=Q_1\cap\ldots\cap Q_r$, where $P_i=\sqrt{Q_i}$ are invariant finitely generated prime ideals, which by \cite[Theorem 2.7]{La73b} should have the form: $P_i=I(p_i,n_i)$. Then $\displaystyle\sqrt{\ov{J}(M)}=\cap_i P_i=\cap_i I(p_i,n_i)$. Finally, from Proposition \ref{JM-JMt}
 it follows that 
 $\sqrt{J(M)}=\sqrt{\ov{J}(M)}=\sqrt{\wt{J}(M)}$.
 \Qed
\end{proof}

It follows from \cite[Proposition 2.21]{S18b} that
$\wt{J}(M)$ is finitely generated too. Thus, Proposition
\ref{JMt-LNinv} and the mentioned result of Landweber imply
that $\wt{J}(M)=\wt{Q}_1\cap\ldots\cap \wt{Q}_r$ for some
finitely generated invariant primary ideals with the same
radicals $\sqrt{\wt{Q}_i}=P_i$.\\

To simplify the picture, we will move to motives with $\zz_{(p)}$-coefficients. Recall that, as in topology, 
the $p$-localized
algebraic cobordism $\Omega^*_{\zz_{(p)}}$ naturally splits
as a polynomial algebra over the theory $BP^*$ with
the coefficient ring $BP=\zz_{(p)}[v_1,v_2,\ldots]$,
where $v_i$ are $\nu_i$-elements of dimension $p^i-1$.
We will denote the ideal $(p,v_1,\ldots,v_{n-1})$
of $BP$ as $I(n)$. The Landweber-Novikov operations descend 
naturally to the $BP$-theory and the respective prime
invariant finitely generated ideals are exactly $I(n)$, for 
$n\in\nn$, and $I(0)=(0)$ - \cite{La73b}. We will denote the respective Landweber-Novikov algebra as $BP_*BP$ as in topology. In the $p$-localized case we will
use the same notations $J(M)$ and $\ov{J}(M)$ for the similar 
ideals
of $BP$.
We have the graded version $GrBP$ and the natural surjection
$\phi:\CH^*\otimes_{\zz}BP\twoheadrightarrow GrBP^*$ which we
will denote simply by: $\phi(z\otimes u)=:z\cdot u$.
\begin{cor}
 \label{Zp-nur}
 Let $M\in Ob(Chow(k,\zz_{(p)}))$ be non-zero, then $\sqrt{J(M)}=I(n)$, for some $n$. In particular, $J(M)$ contains some powers of $v_r$, for all $r<n$, and doesn't contain any such powers, for $r\geq n$.
\end{cor}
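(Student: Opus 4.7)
The plan is to combine Proposition \ref{rJMt-Ipn} (in its $p$-local reincarnation) with the fact that the invariant prime ideals of $BP$ form a linearly ordered chain. Everything will hinge on a bookkeeping observation about these primes.

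First I would dispose of the case when $M$ is not a torsion motive. The proof of Proposition \ref{torsion-JM} (part $(1)\Rightarrow(2)$ notwithstanding) actually establishes the \emph{equivalence}: $M$ is torsion iff $J(M)\neq 0$. So if $M$ is non-zero but not torsion, then $J(M)=0=I(0)$, and the conclusion holds with $n=0$; the assertion about the $v_r$ is vacuous for $r<0$ and is automatic for $r\geq 0$ since the $v_r$ are non-nilpotent elements of $BP$ and $J(M)=0$.

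Next, assume $M$ is torsion. The passage from $\laz$ to $BP$ recorded in the paragraph preceding the corollary upgrades Proposition \ref{rJMt-Ipn} to the $p$-local setting, and I would invoke this upgrade verbatim: with $\zz_{(p)}$-coefficients, primes $q\neq p$ become units so the only pairs $(q,m)$ that contribute are those with $q=p$. Hence
\[
\sqrt{J(M)} \;=\; \bigcap_{i=1}^{r} I(n_i)
\]
for finitely many $n_i$. The key observation is that the invariant primes $I(n)=(p,v_1,\ldots,v_{n-1})$ form a chain $I(0)\subsetneq I(1)\subsetneq I(2)\subsetneq\cdots$ by definition. Any finite intersection of members of a chain collapses to the minimal member, so $\sqrt{J(M)}=I(n)$ with $n=\min_i n_i$.

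For the last sentence I would simply unpack the membership criterion: by the very description of the generators of $I(n)$, we have $v_r\in I(n)$ iff $r<n$, and $v_r\in\sqrt{J(M)}$ iff some power $v_r^k$ lies in $J(M)$. Combining these gives precisely the dichotomy claimed. The main obstacle here is really just a sanity check, namely verifying that Proposition \ref{rJMt-Ipn} and the Landweber classification \cite[Theorem 2.7]{La73b} transfer cleanly from $\laz$ to $BP$; but this is exactly the reduction set up in the paragraph preceding the corollary, so there is no serious content beyond the linear ordering of the $I(n)$.
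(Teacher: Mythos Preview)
Your argument is correct and matches the paper's intended (implicit) reasoning: the corollary is stated without proof precisely because it follows from Proposition~\ref{rJMt-Ipn} once one passes to $BP$ via the paragraph preceding it, together with the observation that the invariant primes $I(n)$ of $BP$ are linearly ordered by inclusion. Your explicit handling of the non-torsion case ($n=0$) and the collapse of the finite intersection to its minimal member are exactly the missing details the reader is expected to supply.
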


\begin{defi}
 \label{level}
 Let $M\in Ob(Chow(k))$. Define the '$p$-level' of $M$ as the number $n$, such that\\
 $\sqrt{J(M_{\zz_{(p)}})}=I(n)$, and $\infty$, if $M_{\zz_{(p)}}=0$.
\end{defi}

Let $K(r)$, for $r\geq 1$, be the $r$-th Morava K-theory. It is obtained from $\Omega^*(X)$ by change of coefficients:
$K(r)(X)=\Omega^*(X)\otimes_{\laz}K(r)$, where the coefficient ring $K(r)$ is $\zz/p[v_r,v_r^{-1}]$. We can also introduce $K(0)(X)=\CH^*(X)\otimes_{\zz}\qq$.
As was explained above, any Chow motive corresponds uniquely to a cobordism motive. Having a cobordism motive, we can construct $A^*$-motive for any oriented theory $A^*$, by using the {\it universality} of algebraic cobordism $\Omega^*$ - \cite[Theorem 1.2.6]{LM}. In particular, any Chow motive $M$ produces a sequence of Morava-motives
$M^{K(r)}$.
We can interpret the $p$-level of $M$ in terms of these motives.

\begin{prop}
 \label{In-Morava}
 The $p$-level $n$ of $M$ is uniquely determined from the condition: $M^{K(r)}=0$, for $r<n$, and $M^{K(r)}\neq 0$, for $r\geq n$.
\end{prop}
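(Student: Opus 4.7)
The plan is to verify both implications of the biconditional, working with $\zz_{(p)}$-coefficients so that by Corollary \ref{Zp-nur} the $p$-level $n$ is characterized by $\sqrt{J(M_{\zz_{(p)}})}=I(n)$ in $BP$ (with $n=\infty$ when $M_{\zz_{(p)}}=0$).

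For the easy direction $r<n\Rightarrow M^{K(r)}=0$: when $r\geq 1$, the inclusion $v_r\in I(n)=\sqrt{J(M_{\zz_{(p)}})}$ yields a power $v_r^a\in J(M_{\zz_{(p)}})$, so $v_r^a\cdot\rho^{BP}=0$ in $BP^*(X^{\times 2})$. The morphism of oriented theories $BP\row K(r)$ inverts $v_r$, so its image $\rho^{K(r)}$ is annihilated by a unit and therefore vanishes. For $r=0$, the hypothesis $n\geq 1$ forces $M$ to be a torsion motive by Proposition \ref{torsion-JM}, and since $K(0)=\CH\otimes\qq$ is rational, $M^{K(0)}=0$ immediately.

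For the hard direction $r\geq n\Rightarrow M^{K(r)}\neq 0$: argue by contradiction, assuming $\rho^{K(r)}=0$. Using the presentation $K(r)=BP/(p,v_i\,:\,i\neq r)[v_r^{-1}]$ together with the computation of $K(r)$-theory from $BP$-theory available for smooth projective varieties, the vanishing of $\rho^{K(r)}$ supplies an identity
$$
v_r^a\cdot\rho^{BP}=p\cdot y_0+\sum_{i\neq r}v_i\cdot y_i
$$
in $BP^*(X^{\times 2})$ for some $a\geq 0$ and some $y_i$. Replacing each $y_i$ by $\rho^{BP}\cdot y_i\cdot\rho^{BP}$ and using $(\rho^{BP})^2=\rho^{BP}$ we may take all the right-hand side terms to be ``internal'' to the motive. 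One then propagates this identity, via the Landweber--Novikov invariance of $\wt J(M)$ (Proposition \ref{JMt-LNinv}) and Landweber's classification of invariant prime ideals of $\laz$ (\cite[Theorem 2.7]{La73b}) --- along the lines of the proof of Proposition \ref{rJMt-Ipn} --- to a conclusion that a suitable power of $v_r$ itself lies in $\sqrt{\wt J(M)}=\sqrt{J(M_{\zz_{(p)}})}=I(n)$. Since $r\geq n$ forces $v_r\notin I(n)$, this is the desired contradiction.

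The main obstacle is precisely this last step of the hard direction: passing from the ``local'' vanishing in $K(r)$ back to a membership statement in $\sqrt{J(M)}$. Conceptually this is an algebraic analog of the Hopkins--Smith thick-subcategory detection theorem --- the set $\{r:M^{K(r)}\neq 0\}$ is an upward-closed subset of $\nn$ uniquely determined by the radical of $J(M_{\zz_{(p)}})$ --- and can be carried out using the invariant-ideal filtration $I(n)\subset I(n+1)\subset\cdots$ of $BP$ together with the Landweber--Novikov action on these ideals.
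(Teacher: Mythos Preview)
Your easy direction is fine and matches the paper's. The hard direction, however, has a genuine gap, and you yourself flag it: the passage from the identity
\[
v_r^a\cdot\rho^{BP}=p\cdot y_0+\sum_{i\neq r}v_i\cdot y_i
\]
to the conclusion $v_r\in\sqrt{J(M)}$ is never carried out. The problem is structural. The ideals $J(M)$, $\wt J(M)$, $\ov J(M)$ live in $BP$ and record which \emph{scalars} annihilate $\rho^{BP}$ (or $\slnT(\rho^{BP})$). Your identity, even after replacing each $y_i$ by $\rho^{BP}y_i\rho^{BP}$, only says that $v_r^a\cdot id_{M^{BP}}$ lies in the two-sided ideal of $\End(M^{BP})$ generated by $p$ and the $v_i$ with $i\neq r$. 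That is a statement about an ideal in a (possibly large, non-commutative) ring, not in $BP$; neither Proposition~\ref{JMt-LNinv} nor the argument of Proposition~\ref{rJMt-Ipn} lets you extract from it a scalar relation $v_r^a\in J(M)$. The Landweber--Novikov operations act on $BP^*(X^{\times 2})$, but they will move the $y_i$ around as well, and there is no mechanism here that forces the right-hand side to become zero.

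The paper avoids this trap by abandoning the attempt to produce a scalar relation and instead analysing the $BP$-module $BP(M\otimes M^{\vee})$ directly. The key inputs are: (i) passing to the graded pieces $\ov{BP}_{(c)}(M\otimes M^{\vee})$ of the codimension-of-support filtration; (ii) Sechin's result that each such piece is a filtered colimit of finitely presented $BP_*BP$-modules, combined with Landweber's filtration theorem expressing those as iterated extensions of $\ov{BP}/I(m)$; (iii) the Koszul computation $Tor^{\ov{BP}}_*(\ov{BP}/I(m),K(r))=0$ for $m>r$ and $\ov{BP}/I(m)\otimes_{\ov{BP}}K(r)=K(r)$ for $m\leq r$; and (iv) a minimality argument producing a \emph{finitely generated} quotient module on which no power of $v_r$ kills the image of the diagonal class. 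These together show $K(r)(M\otimes M^{\vee})\neq 0$, whence $M^{K(r)}\neq 0$ by Lemma~\ref{tensor-pow}. None of this is captured by ``propagating'' your displayed identity via Landweber--Novikov invariance; the module-theoretic analysis is essential.
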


\begin{proof}
 The fact that $M^{K(r)}=0$, for $r<n$, is obvious, as $J(M)$ contains some
 power of $v_r$, while this element is inverted in $K(r)$.
 
 Let now $r\geq n$. Let $M=(X,\rho)$ and $M^{\vee}=(X,\rho^{\vee})$ be the direct summand given by the dual projector (we swap two factors in $X^{\times 2}$). Then no power of $v_r$ annihilates $BP(M\otimes M^{\vee})$ (as the identity map of $M$ is represented by the diagonal class there), and as $M$ is $p$-primary torsion, no power of $v_r$ annihilates the mod-$p$
 reduction $\ov{BP}(M\otimes M^{\vee})$, where $\ov{BP}=BP/p$. 
 Then no power of $v_r$ will annihilate the graded version $Gr\ov{BP}(M\otimes M^{\vee})$. Consider the graded component $\ov{BP}_{(c)}(M\otimes M^{\vee})$ of codimension of support $=c$. We have two cases:
 1) $v_r\in\sqrt{Ann(\ov{BP}_{(c)}(M\otimes M^{\vee}))}$;
 2) $v_r\not\in\sqrt{Ann(\ov{BP}_{(c)}(M\otimes M^{\vee}))}$.
 
 1) From Sechin \cite[Proposition 2.21]{S18b} we know that 
 $\ov{BP}_{(c)}(M\otimes M^{\vee})$ is a union of finitely presented
 $BP_*BP$ modules, which by the result of Landweber \cite[Lemma 3.3]{La73b} are extensions of finitely many modules of the type
 $\ov{BP}/I(m)$. Since some power of $v_r$ annihilates our module, all these $m$ are $>r$. But, for such $m$, $Tor^{\ov{BP}}_i(\ov{BP}/I(m),K(r))=0$, for all $i$, as can be seen from the Koszul resolution
 of the $\ov{BP}$-module $\ov{BP}/I(m)$. Since $Tor$s commute with filtered colimits, we obtain that $Tor^{\ov{BP}}_i(\ov{BP}_{(c)}(M\otimes M^{\vee}),K(r))=0$, for all $i$. This implies that (for the codimension of support filtration $F_l$ on $\ov{BP}$) the map
 $$
 F_{c+1}\ov{BP}(M\otimes M^{\vee})\otimes_{\ov{BP}}K(r)\hookrightarrow
 F_c\ov{BP}(M\otimes M^{\vee})\otimes_{\ov{BP}}K(r)
 $$ 
 is injective (actually, an isomorphism).
 
 2) The space of $\ov{BP}$-generators of $\ov{BP}_{(c)}(M\otimes M^{\vee})$ can be identified with $\CH^c(M\otimes M^{\vee})/p$.  From \cite[proof of Theorem 4.1]{ACMLR} we know that for any subspace of generators, the relations in the respective $\ov{BP}$-submodule are generated by elements of positive co-dimension. Let $y$ be such a generator that $v_r\not\in\sqrt{Ann(y)}$ (we know that such $y$ exists). Let $N$ be the minimal quotient-module of $\ov{BP}_{(c)}(M\otimes M^{\vee})$, obtained by moding out a $\ov{BP}$-submodule generated by some subspace of generators with the property that the image
 of $y$ there is not annihilated by any power of $v_r$. I claim 
 that $N$ is finitely generated $\ov{BP}$-module. 
 Note that $N$ still has the property that the submodule generated by any subspace of generators of it has relations generated in positive co-dimension.
 Hence, if the submodule $\ov{BP}\cdot x$ generated by some generator $x$ 
 does not contain elements of the type $y\cdot v_r^k$ of positive co-dimension, it doesn't contain such elements at all (of any co-dimension). But since there are only finitely many elements of $\ov{BP}$ of bounded dimension, the space of generators of $N$ has a subgroup $C$ of finite index, so that for every
 $x\in C$, the submodule $\ov{BP}\cdot x$ doesn't
 contain elements of the form $y\cdot v_r^k$ of positive co-dimension. If $C$ is non-zero, this contradicts the minimality of $N$ (as we may mod-out the $\ov{BP}$-submodule, generated by $x$). Hence, $N$ has only finitely many generators. Since all the relations are in positive co-dimension, it is finitely presented (there are only finitely many such relations)
 
 Since $N$ is a finitely presented $BP_*BP$-module, by the results of Landweber \cite[Lemma 3.3]{La73b}, $N$ is an extension of finitely many
 $BP_*BP$-modules of the type $\ov{BP}/I(m)$. Since no power of $v_r$ annihilates the image of $y$ and so, $N$ itself, we see that among these pieces there should be, at least, one with $m\leq r$.
 Now, from the Koszul resolution of a $\ov{BP}$-module $\ov{BP}/I(m)$, we see that $\left(\ov{BP}/I(m)\right)\otimes_{\ov{BP}}K(r)=K(r)$, for $r\geq m$. Looking at the ``most outer'' piece with
 $m\leq r$ (and recalling that, for $l>r$, $Tor^{\ov{BP}}_i(\ov{BP}/I(l),K(r))=0$, for all $i$), we obtain that $N\otimes_{\ov{BP}}K(r)\neq 0$. Since $N$ is a quotient-module of $\ov{BP}_{(c)}(M\otimes M^{\vee})$, we get that
 $\ov{BP}_{(c)}(M\otimes M^{\vee})\otimes_{\ov{BP}}K(r)$
 is non-zero too. And so is $F_c\ov{BP}(M\otimes M^{\vee})\otimes_{\ov{BP}}K(r)$.
 
 Since we know that there exists $c$, such that $v_r\not\in\sqrt{Ann(\ov{BP}_{(c)}(M\otimes M^{\vee}))}$, combining cases 1) and 2) we obtain that $\ov{BP}(M\otimes M^{\vee})\otimes_{\ov{BP}}K(r)=K(r)(M\otimes M^{\vee})$ is non-zero. 
 Hence, $M^{K(r)}\neq 0$.
 \Qed
\end{proof}

Thus, our notion of the $p$-{\it level} agrees with that of the {\it type} in topology - see \cite[Definition 1.5.3]{Rav}.

\begin{rem}
 Since $K(1)$ is just a re-orientation (as in Quillen \cite{Qu71}, or Panin-Smirnov \cite{PS}) of the $p$-localised K-theory $K_0$, we see that ($p$-localised) Chow motives $M$ of $p$-level $\leq 1$
 are distinguished by the property that their K-motives are non-trivial. For motives of $p$-level $>1$, their $K$-motive is zero. So, these are 'K-phantom motives'.
 \Red
\end{rem}

Moreover, the Morava K-theory of $M$ disappers together with the ``derived versions'' of it.

\begin{prop}
 Let $M$ be a ($p$-localized) Chow motive of $p$-level $n$. Then the following conditions are equivalent:
 \begin{itemize}
  \item[$(1)$] $r<n$;
  \item[$(2)$] $Tor^{BP}_i(BP(M\otimes M^{\vee}),K(r))=0$, for all $i$.
 \end{itemize}
\end{prop}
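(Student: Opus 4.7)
The plan is to reduce the equivalence to two observations: (i) the condition $v_r^k\in J(M)$ is equivalent to the whole $BP$-module $N:=BP(M\otimes M^{\vee})$ being annihilated by $v_r^k$, and (ii) the identification $K(r)(M\otimes M^{\vee})=N\otimes_{BP}K(r)$ already established inside the proof of Proposition~\ref{In-Morava}.

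For (i), write $M=(X,\rho)$, so that the projector of $M\otimes M^{\vee}$ is $\rho\otimes\rho^{\vee}\in BP(X^{\times 4})$ and every element of $N$ has the form $(\rho\otimes\rho^{\vee})\cdot y$ for some $y\in BP(X\times X)$. Since $v_r^k$ is pulled back from $\op{Spec}(k)$, the projection formula gives $v_r^k\cdot(\rho\otimes\rho^{\vee})=(v_r^k\cdot\rho)\otimes\rho^{\vee}$, so $v_r^k\cdot\rho=0$ in $BP(X^{\times 2})$ kills everything in $N$. The reverse direction is immediate, as the identity map of $M$ is itself represented by $\rho\in N$.

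For $(1)\Rightarrow(2)$: if $r<n$, then by Corollary~\ref{Zp-nur} some power $v_r^k$ lies in $J(M)$, so by (i) the module $N$ is annihilated by $v_r^k$. Since $v_r$ acts invertibly on $K(r)$, multiplication by $v_r^k$ on $Tor^{BP}_i(N,K(r))$ is simultaneously zero (via the first argument) and an isomorphism (via the second), forcing these groups to vanish for every $i\geq 0$.

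For $(2)\Rightarrow(1)$, I argue contrapositively: if $r\geq n$, Proposition~\ref{In-Morava} gives $M^{K(r)}\neq 0$, which is equivalent to $K(r)(M\otimes M^{\vee})\neq 0$, since the vanishing of a motive is detected by the vanishing of its identity map. By (ii), $Tor^{BP}_0(N,K(r))=N\otimes_{BP}K(r)=K(r)(M\otimes M^{\vee})$ is therefore non-zero, so condition $(2)$ fails at $i=0$. The main obstacle is the first step (i): passing from the annihilator of the single distinguished element $\rho$ to that of the whole endomorphism module $N$ requires the projection-formula/centrality argument above. Once this is in place, both implications reduce to quoting Proposition~\ref{In-Morava} and the standard fact that a $v_r$-torsion module has trivial $Tor$ with a $v_r$-invertible one.
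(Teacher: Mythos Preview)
Your proof is correct, and for the implication $(2)\Rightarrow(1)$ it coincides with the paper's: both use the $i=0$ case together with Proposition~\ref{In-Morava} to identify $Tor^{BP}_0(N,K(r))$ with $K(r)(M\otimes M^{\vee})$ and conclude.

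For $(1)\Rightarrow(2)$, however, your route is genuinely more elementary than the paper's. The paper, having noted (as you do in step~(i)) that some power of $v_r$ annihilates $N=BP(M\otimes M^{\vee})$, then invokes the structural results of Sechin \cite[Proposition~2.21]{S18b} and Landweber \cite[Lemma~3.3]{La73b} to write $N$ as a filtered colimit of finitely presented $BP_*BP$-modules, each built from pieces $BP/I(m)$ with $m>r$, and finally computes $Tor^{BP}_i(BP/I(m),K(r))=0$ via the Koszul resolution. You bypass all of this with the standard functoriality observation: multiplication by $v_r^k$ on $Tor^{BP}_i(N,K(r))$ is zero when computed through the first variable (since $v_r^k N=0$) and invertible when computed through the second (since $v_r$ is a unit in $K(r)$), forcing the groups to vanish. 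This is shorter and requires no fine structure theory of $BP_*BP$-modules. The paper's approach, on the other hand, is consistent with the machinery already deployed in the proof of Proposition~\ref{In-Morava} and makes the vanishing ``visible'' piece by piece; but for the bare statement at hand your argument suffices. One minor remark: your notation ``$(\rho\otimes\rho^{\vee})\cdot y$'' for the correspondence action is informal, but the underlying projection-formula computation you sketch is exactly right.
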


\begin{proof}
 By the very definition, $(1)$ implies that some power of $v_r$ annihilates $BP(M\otimes M^{\vee})$. But by the results of Sechin \cite[Proposition 2.21]{S18b} and
 Landweber \cite[Lemma 3.3]{La73b}, this module is a union of finitely-presented $BP$-modules, each of which is an extension of finitely many
 modules of the type $BP/I(m)$. All these $m$s should be $>r$ due to mentioned annihilation. But as we discussed above, for $m>r$, $Tor^{BP}_i(BP/I(m),K(r))=0$, for all $i$. So, the same is true about $BP(M\otimes M^{\vee})$ which implies $(2)$.
 
 Conversely, since $Tor^{BP}_0(BP(M\otimes M^{\vee}),K(r))=K(r)(M\otimes M^{\vee})$, the $i=0$ case of $(2)$ implies $(1)$ by 
 Proposition \ref{In-Morava}.
 \Qed
\end{proof}

The following result shows that the $p$-level is stable under $\otimes$-powers.

\begin{lem}
 \label{tensor-pow} 
 Let $A^*$ be an oriented cohomology theory and $M\in Ob(Chow^A(k))$. Then $M\neq 0$ $\Rightarrow$
 $M^{\otimes n}\otimes (M^*)^{\otimes m}\neq 0$,
 $\forall n,m$, where $M^*=\Homi(M,T)$.
\end{lem}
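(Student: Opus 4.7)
The plan is to exploit the rigid symmetric monoidal structure on $Chow^A(k)$. Every object $M$ is dualizable, with coevaluation $\eta_M \colon T \to M \otimes M^*$ and evaluation $\epsilon_M \colon M^* \otimes M \to T$ satisfying the triangle identity $(id_M \otimes \epsilon_M) \circ (\eta_M \otimes id_M) = id_M$. Under the standard adjunction $\Hom(M, M) \cong \Hom(T, M \otimes M^*)$, the identity $id_M$ corresponds to $\eta_M$, so $M \neq 0 \Leftrightarrow id_M \neq 0 \Leftrightarrow \eta_M \neq 0$. This reframing turns the problem into one about the coevaluation.

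The heart of the argument is the claim that for every $k \geq 1$, the iterated coevaluation $\eta_M^{\otimes k} \colon T \to (M \otimes M^*)^{\otimes k}$ is non-zero. I would prove this by induction on $k$, the base case $k=1$ being the observation above. For the inductive step I would introduce a ``boundary contraction''
$$\phi_k \colon (M \otimes M^*)^{\otimes k} \longrightarrow (M \otimes M^*)^{\otimes(k-1)}$$
obtained by applying $\epsilon_M$ to the $M^*$-factor coming from the $(k-1)$-th copy together with the $M$-factor coming from the $k$-th copy, and the identity on all remaining tensorands. The key computation is that $(id_M \otimes \epsilon_M \otimes id_{M^*}) \circ (\eta_M \otimes \eta_M) = \eta_M$, which is a direct consequence of the triangle identity. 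Applying this only on the last two copies of $M \otimes M^*$ yields $\phi_k \circ \eta_M^{\otimes k} = \eta_M^{\otimes(k-1)}$. Hence the vanishing of $\eta_M^{\otimes k}$ would force $\eta_M^{\otimes(k-1)} = 0$, contradicting the inductive hypothesis.

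The main obstacle, as I see it, is this triangle-identity bookkeeping: conceptually transparent, but one must be careful with the symmetry isomorphisms and associators when inserting the contraction between two specific tensor slots among many.

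Once the claim is established, the symmetry of $\otimes$ gives $(M \otimes M^*)^{\otimes k} \cong M^{\otimes k} \otimes (M^*)^{\otimes k}$, so $M^{\otimes k} \otimes (M^*)^{\otimes k} \neq 0$ for every $k \geq 1$. For arbitrary $n, m$ not both zero, I may assume $n \geq m$ (the other case follows by swapping the roles of $M$ and $M^*$ via $M^{**} \cong M$). If $M^{\otimes n} \otimes (M^*)^{\otimes m}$ were zero, tensoring with $(M^*)^{\otimes(n-m)}$ would give $M^{\otimes n} \otimes (M^*)^{\otimes n} = 0$, contradicting the previous step. The remaining case $n = m = 0$ reduces to $T \neq 0$, which is automatic since otherwise every object, including $M$, would be zero.
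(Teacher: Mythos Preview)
Your proof is correct and follows essentially the same idea as the paper: both exploit rigidity and the fact that $id_M$ corresponds to the coevaluation $\eta_M\colon T\to M\otimes M^*$ under the duality adjunction. The paper's proof is extremely terse, simply asserting that it suffices to treat the case $n=m=1$ and then observing $M\neq 0\Rightarrow id_M\neq 0\Rightarrow \eta_M\neq 0\Rightarrow M\otimes M^*\neq 0$; your inductive contraction $\phi_k$ and the final ``tensor up to the diagonal'' step make explicit exactly the reduction that the paper leaves to the reader.
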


\begin{proof}
 It is sufficient to prove it for $n=m=1$. This is a standard
 argument for tenzor rigid categories:
 $
 M\neq 0\Rightarrow id_M\neq 0\Rightarrow T\stackrel{\Delta}{\row} M\otimes M^*\,\,\text{is nonzero}\,\,
 \Rightarrow M\otimes M^*\neq 0
 $. 
 \Qed
\end{proof}

Applying this to Morava-motives of $M$, we obtain the first statement of the following:

\begin{prop}
 \label{p-level-tensor}
 The $p$-level of $M^{\otimes n}$ coincides with that of $M$.
More precisely, we have:
 The motives $M^{\otimes n}\otimes (M^{\vee})^{\otimes m}$, for all $(n,m)\neq (0,0)$, have the same $J$ and $\ov{J}$.
\end{prop}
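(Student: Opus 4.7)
The plan is to prove the two statements in sequence. For the $p$-level equality, I will apply Lemma \ref{tensor-pow} inside each Morava-motivic category: by Proposition \ref{In-Morava} it suffices to show, for every $r \geq 0$, that $M^{K(r)} = 0$ iff $(M^{\otimes n} \otimes (M^{\vee})^{\otimes m})^{K(r)} = 0$. Since passage to $K(r)$-motives is symmetric monoidal and commutes with duality, the latter motive equals $(M^{K(r)})^{\otimes n} \otimes ((M^{K(r)})^{\vee})^{\otimes m}$, and Lemma \ref{tensor-pow} applied in $Chow^{K(r)}(k)$ (using $(n,m) \neq (0,0)$) gives the desired equivalence of vanishing.

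For the refined statement, set $N := M^{\otimes n} \otimes (M^{\vee})^{\otimes m}$; I will prove $J(N) = J(M)$, and the identical argument carried out in $Gr\Omega^*$ in place of $\Omega^*$ will yield $\ov{J}(N) = \ov{J}(M)$, since every ingredient used (composition of correspondences, tensor product, duality, triangle identity) descends to the graded version. The easy inclusion $J(M) \subseteq J(N)$ is formal: $J(M^{\vee}) = J(M)$ by duality, so one may assume $n \geq 1$, and then $id_{N^{\Omega}} = id_{M^{\Omega}} \otimes id_{\text{rest}}$, so any $u \in J(M)$ kills $id_{N^{\Omega}}$.

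The main content is the reverse inclusion $J(N) \subseteq J(M)$. My plan is to exhibit $M$ as a retract of $L_k := M^{\otimes (k+1)} \otimes (M^{\vee})^{\otimes k}$ in $Chow^{\Omega}(k)$ for every $k \geq 0$. The base case $k = 1$ is exactly the triangle identity $(id_M \otimes \op{ev}_M) \circ (\Delta_M \otimes id_M) = id_M$, displaying $M$ as a retract of $M \otimes M^{\vee} \otimes M \cong L_1$; the inductive step replaces one inner copy of $M$ in $L_k$ by its retract $M \otimes M^{\vee} \otimes M$, showing that $L_k$ is a retract of $L_{k+1}$ (and hence $M$ is a retract of $L_{k+1}$). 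I then choose $k := \max(n-1, m)$, so that both $k+1-n$ and $k-m$ are nonnegative and $L_k \cong N \otimes M^{\otimes (k+1-n)} \otimes (M^{\vee})^{\otimes (k-m)}$. For $u \in J(N)$, tensoring identities gives $u \in J(L_k)$, and writing $p \circ i = id_{M^{\Omega}}$ for the retract gives $u \cdot id_{M^{\Omega}} = p \circ (u \cdot id_{L_k^{\Omega}}) \circ i = 0$, as required.

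The main obstacle to watch for is that $M$ is torsion, so the Euler characteristic $\chi(M) \in \laz$ vanishes (being torsion and living in a torsion-free ring). This kills the more naive strategy of exhibiting $T$ as a retract of $M \otimes M^{\vee}$ via $\Delta_M$ and $\op{ev}_M$, whose relevant composition is $\chi(M)$. The triangle identity sidesteps this by producing $M$ -- not $T$ -- as a retract inside $M \otimes M^{\vee} \otimes M$, and this is precisely what makes the iteration above work in the torsion setting.
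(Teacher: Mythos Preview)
Your proof is correct and rests on the same categorical mechanism as the paper's: the triangle identity in a rigid tensor category. The paper's argument is more compressed --- it reduces everything to the single comparison $(1,0)$ versus $(1,1)$ and then observes the cycle of implications $z\cdot id_M=0\Leftrightarrow z\cdot\Delta=0\Leftarrow z\cdot id_{M\otimes M^{*}}=0\Leftarrow z\cdot id_M=0$ (in both $\Omega^*$ and $Gr\Omega^*$), where $\Delta$ is the coevaluation --- whereas you unfold this into an explicit retract $M\hookrightarrow L_k$ built by iterating the triangle identity. The two are equivalent reformulations: your retract maps $i,p$ are exactly the iterated coevaluation/evaluation that witness the paper's equivalences, and your observation that the argument transports to $Gr\Omega^*$ matches the paper's parenthetical ``in both $\Omega^*$ and $Gr\Omega^*$''. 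Your closing remark about why the naive $T\hookrightarrow M\otimes M^{\vee}$ strategy fails (vanishing Euler characteristic) is a nice piece of orientation not present in the paper.
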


\begin{proof}
 It is sufficient to compare $(1,0)$ and $(1,1)$. 
 In the notations of the proof of Lemma \ref{tensor-pow},\\
 $z\cdot id_M=0\Leftrightarrow z\cdot\Delta=0\Leftarrow z\cdot id_{M\otimes M^{*}}=0\Leftarrow z\cdot id_M=0$ in both
 $\Omega^*$ and $Gr\Omega^*$, where $M=(X,\rho)$ and $M^{\vee}=M^*(d)[2d]$ for $d=\ddim(X)$ is given by the dual projector.
 \Qed
\end{proof}

The following result allows to obtain a bound on the possible size of a torsion motive in terms of the $p$-level of it.

\begin{prop}
 \label{chow-level-r}
 Let $Y$ be a smooth variety, $z\in\CH^r_{\zz_{(p)}}(Y)$ and $\alpha\equiv v_{m}^k (mod\, I(m))\in BP$ be such that $z\cdot\alpha=0\in Gr BP(Y)$. Then
 \begin{itemize}
  \item[$(1)$] Either $z=0$, or $\displaystyle r\geq\frac{p^{m+1}-1}{p-1}$;
  \item[$(2)$] If $\displaystyle r\leq\frac{(2p-1)(p^m-1)}{p-1}$, then there exists an $\alpha$ as above with $k=1$.
 \end{itemize}
\end{prop}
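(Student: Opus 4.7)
The plan is to apply the symmetric operations of Vishik~\cite{SOpSt} iteratively to the relation $z\cdot\alpha=0$ in $GrBP(Y)$. Symmetric operations refine the Landweber--Novikov operations and, crucially, are capable of extracting $v_i$-divisibility information that Landweber--Novikov operations cannot see. Their relevant computational feature is that, applied to a product $z\cdot u$ of a Chow class $z\in\CH^*_{\zz_{(p)}}(Y)$ with a coefficient $u\in BP$, they produce, up to corrections governed by the universal formal group law, a new product $z'\cdot u'$, in which $z'$ is of larger codimension by a specific amount and $u'$ is a controlled modification of $u$.

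For part $(1)$, suppose $z\neq 0$. Starting from $z\cdot\alpha=0$ with $\alpha\equiv v_m^k\pmod{I(m)}$, I would apply a symmetric operation of degree $p^m$ to obtain a new relation $z_1\cdot\alpha_1=0$ with $z_1\in\CH^{r-p^m}_{\zz_{(p)}}(Y)$ and $\alpha_1$ congruent, up to a unit, to $v_m^{k-1}\pmod{I(m)}$ when $k\geq 2$, or to $v_{m-1}^{k'}\pmod{I(m-1)}$ when $k=1$. Iterating through the remaining $v_m$-exponent and then successively through symmetric operations of degrees $p^{m-1},\ldots,p,1$ peels off the generators $v_m,v_{m-1},\ldots,v_0$ one at a time, each step decreasing the codimension by $p^i$. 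At each stage one has to verify that the resulting Chow class is still nonzero, using minimality of the original relation. After a cumulative codimension decrease of $\frac{p^{m+1}-1}{p-1}$ the residual class would live in $\CH^{r-\frac{p^{m+1}-1}{p-1}}_{\zz_{(p)}}(Y)$ with an annihilator in $\laz$ containing a unit, which is absurd. Hence $r\geq\frac{p^{m+1}-1}{p-1}$.

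For part $(2)$, the aim is to replace $\alpha$ by a representative with $k=1$. The strategy is the inverse: an exponent-reduction $v_m^k\mapsto v_m^{k-1}\pmod{I(m)}$ is implemented by a symmetric operation that extracts a single factor of $v_m$, modifying $z$ to some $\wt z\in\CH^{r+p^m-1}_{\zz_{(p)}}(Y)$. After $k-1$ such reductions the exponent is $1$, and the intermediate class has codimension $r+(k-1)(p^m-1)$. For the procedure not to conflict with the lower bound of part $(1)$, this intermediate codimension must stay below $\frac{p^{m+1}-1}{p-1}$. Maximising the allowed $k$ subject to the original relation yields precisely the stated bound $r\leq\frac{(2p-1)(p^m-1)}{p-1}$.

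The main obstacle will be controlling the formal-group-law corrections produced when symmetric operations are applied to products. At each iteration one must check that (a) the leading coefficient in front of the new Chow class is actually a unit in the relevant quotient $BP/I(j)$, and (b) the $I(m)$-congruence class of the emerging $\alpha_i$ is preserved along the iteration. Both verifications rely on the explicit formulas in \cite{SOpSt} together with a careful choice of generators for $BP$ in the spirit of Hazewinkel or Araki, after which the codimension bookkeeping becomes routine.
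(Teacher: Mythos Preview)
Your overall instinct---use symmetric operations to strip powers of $v_m$ from the relation $z\cdot\alpha=0$---is exactly the paper's strategy. But the mechanism you describe is not how these operations behave in $GrBP$, and the proof as written would not go through.

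The crucial formula is \cite[Proposition 7.14]{SOpSt}: for $z\in\CH^r(Y)$ with $r>0$ and $u\in BP$,
\[
\Phi(z\cdot u)=z\cdot t^{r(p-1)}\cdot\mathbf{i}^r\cdot\Phi(u)_{\leq -r(p-1)},
\]
where $\mathbf{i}\in\zz_{(p)}^{\times}$. The Chow class $z$ is \emph{unchanged}; only the $BP$-coefficient moves. So there is no $z_1\in\CH^{r-p^m}$, no cascading through $v_{m-1},\ldots,v_0$, and no cumulative codimension drop of $\sum p^i$. The bound on $r$ arises instead from the truncation $(\,)_{\leq -r(p-1)}$: by \cite[Proposition 7.16]{SS}, the component $\Phi_{-l(p-1)(p^m-1)-(p^m-1)}(\alpha_l)\equiv -v_m^{l-1}\pmod{I(m)}$, and this component survives the truncation precisely when $r\leq\frac{(l(p-1)+1)(p^m-1)}{p-1}$. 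Iterating from $l=k$ down to $l=0$ requires the worst case $l=1$, giving $r\leq\frac{p(p^m-1)}{p-1}=\frac{p^{m+1}-1}{p-1}-1$; otherwise one reaches $\alpha_0\equiv 1\pmod{I(m)}$ with $z\cdot\alpha_0=0$, forcing $z=0$.

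For part $(2)$ the same truncation argument works, but one only needs to descend to $l=1$; the binding constraint is then $l=2$, which gives exactly $r\leq\frac{(2p-1)(p^m-1)}{p-1}$. Your explanation via ``not conflicting with part $(1)$'' and an increased-codimension $\wt z$ is not the actual mechanism.
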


\begin{proof}
The proof is based on {\it symmetric operations} of 
\cite{SOpSt}. The total symmetric operation
$\Phi:\Omega_{\zz_{(p)}}^*\row\Omega_{\zz_{(p)}}^*[t^{-1}]$
is the non-positive part of the total Quillen's type Steenrod operation on $\Omega^*$ divided by 'formal' $[p]$ - see \cite[Theorem 7.1]{SOpSt}. This operation can be extended to the graded algebraic cobordism $Gr\Omega^*$ and
to the $BP^*$-theory (as well as to the graded version of it) - see \cite{ACMLR}. 
By \cite[Proposition 7.14]{SOpSt}, the action of $\Phi$ on $Gr BP^*$ is described as
follows: for $z\in\CH^r(X)$, with $r>0$, and $u\in BP=\zz_{(p)}[v_1,v_2,\ldots]$, we have:
\begin{equation}
 \label{fizu}
\Phi(z\cdot u)=z\cdot t^{r(p-1)}\cdot{\mathbf i}^r\cdot\Phi(u)_{\leq -r(p-1)}, 
\end{equation}
where $\mathbf i$ is some integer, invertible in $\zz_{(p)}$. Here $\Phi_{\leq a}$ is the part of $\Phi$ of $t$-degree $\leq a$.

The idea is to 'divide' the relation $\alpha\cdot z=0$ by $v_{m}$ using symmetric operations (cf. \cite[Corollary 7.11]{SOpSt}). This is possible as long as codimension of $z$
is not very large.
Namely, by \cite[Proposition 7.16]{SS} we know about the action of $\Phi$ on $BP$ that
if $\alpha\equiv v_n^l\,(mod\,I(n))$, for $l>0$, then 
$\Phi_{-l(p-1)(p^n-1)-(p^n-1)}(\alpha)\equiv -v_n^{l-1}\,(mod\,I(n))$. If $\displaystyle r=codim(z)\leq\frac{p(p^{m}-1)}{p-1}$, then 
$-l(p-1)(p^{m}-1)-(p^{m}-1)\leq -r(p-1)$, and so,
(\ref{fizu}) applies. This shows by decreasing induction
on $l$, that for every $k\geq l\geq 0$ there is $\alpha_l\equiv v_{m}^l\,(mod\,I(m))$ such that
$\alpha_l\cdot z=0\in GrBP^*(Y)$. More precisely, we apply $\displaystyle\Phi_{r(p-1)-l(p-1)(p^{m}-1)-(p^{m}-1)}$ to $\alpha_l\cdot z=0$ and get
$\displaystyle\alpha_{l-1}=-{\mathbf i}^{-r}\cdot\Phi_{-l(p-1)(p^{m}-1)-(p^{m}-1)}(\alpha_l)$ of the needed form. Finally, we obtain $\alpha_0\equiv 1\,(mod\, I(m))$ such that $\alpha_0\cdot z=0\in Gr BP^*$. Hence, either $z=0$, or
$\displaystyle r>\frac{p(p^{m}-1)}{p-1}=\frac{p^{m+1}-1}{p-1}-1$. This proves $(1)$.

If $\displaystyle r\leq\frac{(2p-1)(p^m-1)}{p-1}$, then 
$-l(p-1)(p^{m}-1)-(p^{m}-1)\leq -r(p-1)$ as long as $l\geq 2$,
and so, there is $\alpha_1\equiv v_m(mod\, I(m))$ such that
$\alpha_1\cdot z=0\in GrBP^*(Y)$, which gives $(2)$.
 \Qed
\end{proof}

\begin{thm}
 \label{level-size}
 Let $M\!=\!(X,\rho)\!\in\! Ob(Chow(k,\zz_{(p)}))$ be a motive of $p$-level $n$. Then $\displaystyle\ddim(X)\geq\frac{p^{n}-1}{p-1}$.
\end{thm}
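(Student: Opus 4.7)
The plan is to reduce this statement to a direct application of Proposition \ref{chow-level-r}(1) with $m = n-1$, using that the projector $\rho^{\Omega}$ representing $M^{\Omega}$ is a codimension $d := \ddim(X)$ cycle on $X^{\times 2}$.

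First I would unpack what $p$-level $n$ means: by Definition \ref{level}, $\sqrt{J(M_{\zz_{(p)}})} = I(n) = (p,v_1,\ldots,v_{n-1})$, so in particular $v_{n-1} \in \sqrt{J(M)}$. By Proposition \ref{JM-JMt}, $\sqrt{J(M)} = \sqrt{\ov{J}(M)}$, so there exists some $k \geq 1$ with $v_{n-1}^k \in \ov{J}(M)$, i.e.\ $v_{n-1}^k \cdot \rho = 0$ in $GrBP^*(X^{\times 2})$. The class $\rho$ lives in $\CH^{d}(X^{\times 2})$ where $d = \ddim(X)$ (since a projector is a correspondence of degree zero), and since $M$ has finite $p$-level, $M \neq 0$, hence $\rho \neq 0$.

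Next I would set $Y = X^{\times 2}$, $z = \rho$, $r = d$, $m = n-1$, and $\alpha = v_{n-1}^k$ in Proposition \ref{chow-level-r}(1). The congruence $\alpha \equiv v_{n-1}^k \pmod{I(n-1)}$ is immediate (since $v_{n-1} \notin I(n-1)$). The hypothesis $z \cdot \alpha = 0$ in $GrBP(Y)$ is exactly what we established. Since $z = \rho \neq 0$, conclusion (1) of the proposition forces
\[
\ddim(X) = r \geq \frac{p^{(n-1)+1}-1}{p-1} = \frac{p^n-1}{p-1},
\]
which is what we wanted.

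There is no real obstacle here — the heavy lifting is entirely contained in Proposition \ref{chow-level-r}, whose proof uses the symmetric operations of \cite{SOpSt} to divide the relation $v_{n-1}^k \cdot \rho = 0$ successively by $v_{n-1}$ provided the codimension of $\rho$ is not too large. The one subtlety worth pointing out explicitly is that we may invoke $\ov{J}(M)$ rather than $J(M)$ itself (via Proposition \ref{JM-JMt}), because Proposition \ref{chow-level-r} is stated for the graded cobordism where the formula $\Phi(z \cdot u) = z \cdot t^{r(p-1)} \cdot \mathbf{i}^r \cdot \Phi(u)_{\leq -r(p-1)}$ is clean.
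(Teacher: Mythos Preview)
Your proposal is correct and follows exactly the same approach as the paper's proof: apply Proposition \ref{chow-level-r}(1) with $Y=X^{\times 2}$, $r=\ddim(X)$, $m=n-1$, $z=\rho$, and $\alpha=v_{n-1}^k\in\ov{J}(M)$. Your write-up simply spells out in more detail why such an $\alpha$ exists (via Definition \ref{level} and Proposition \ref{JM-JMt}) and why $\rho\neq 0$, which the paper leaves implicit.
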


\begin{proof}
This follows from Proposition \ref{chow-level-r} (1) applied to
$Y=X^{\times 2}$, $r=\ddim(X)$, $m=n-1$ and $z=\rho$, $\alpha=v_{n-1}^k\in\ov{J}(M)$ .
 \Qed
\end{proof}

More generally, we obtain:

\begin{prop}
 \label{CH-small}
 Let $M=(X,\rho)$ be a torsion motive of $p$-level $n$.
 Then $\CH^i_{\zz_{(p)}}(M_F)=0$, for $\displaystyle i<\frac{p^{n}-1}{p-1}$ and any field extension $F/k$.
\end{prop}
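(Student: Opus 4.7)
\noindent\emph{Proof plan.} The plan is to reduce to Proposition \ref{chow-level-r}(1) applied on $Y=X_F$. Concretely, I transport the annihilation $\alpha\cdot\rho_F = 0$ in $GrBP(X_F^{\times 2})$, which holds for a suitable $\alpha\equiv v_{n-1}^k \pmod{I(n-1)}$, into an annihilation $\alpha\cdot z = 0$ in $GrBP(X_F)$ for every $z\in \CH^i_{\zz_{(p)}}(M_F)$, and then apply the same codimension estimate as in Theorem \ref{level-size}.

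Since $M$ has $p$-level $n$, Corollary \ref{Zp-nur} provides $k\geq 1$ and $\alpha\equiv v_{n-1}^k \pmod{I(n-1)}$ with $\alpha\in J(M_{\zz_{(p)}})\subset \ov{J}(M)$. The Remark following Observation \ref{ovJ-LN} records that $\ov{J}$ can only grow under base change, so $\phi(\rho_F\otimes\alpha)=0$ already in $GrBP(X_F^{\times 2})$. Now fix $z\in \CH^i_{\zz_{(p)}}(M_F)$; viewed as a class in $\CH^i(X_F)$ it satisfies $\rho_F * z = z$, where $*$ denotes the correspondence action of $\rho_F\in \CH^{\ddim(X)}(X_F\times X_F)$. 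Since $\phi$ is a morphism of $\laz$-algebras compatible with pullback, proper pushforward and correspondence action, we compute in $GrBP^i(X_F)$
\[
\phi(z\otimes\alpha)\;=\;\phi((\rho_F*z)\otimes\alpha)\;=\;(p_2)_*\bigl(p_1^*(z)\cdot\phi(\rho_F\otimes\alpha)\bigr)\;=\;0,
\]
where the middle equality unpacks the correspondence action using multiplicativity of $\phi$ and the projection formula for $p_2\colon X_F\times X_F\to X_F$, and the final equality uses the vanishing established in the previous step.

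Finally, Proposition \ref{chow-level-r}(1) applied to $Y=X_F$, $r=i$, $m=n-1$ and the relation $z\cdot\alpha=0$ with $\alpha\equiv v_{n-1}^k \pmod{I(n-1)}$ forces either $z=0$ or $i\geq \frac{p^n-1}{p-1}$; the hypothesis $i<\frac{p^n-1}{p-1}$ leaves only $z=0$. The main subtlety is the compatibility of $\phi$ with correspondence action in the displayed line; this holds because the $\laz$-module structure on $GrBP$ is pulled back from the point, and hence commutes with the pullbacks, products and proper pushforwards used in composing correspondences. Beyond this routine compatibility check, I do not anticipate any further obstacles.
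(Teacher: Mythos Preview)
Your argument is correct and follows essentially the same route as the paper: transport the annihilation $\alpha\cdot\rho=0$ in $GrBP$ to each $z\in\CH^i_{\zz_{(p)}}(M_F)$ via the correspondence action, then invoke Proposition~\ref{chow-level-r}(1). The paper's proof compresses your displayed computation into the single sentence ``every element $\alpha\in BP$ annihilating $\rho$ will annihilate $\CH^*_{\zz_{(p)}}(M_F)$ in $GrBP$'', but the content is the same; your version simply makes explicit the $BP$-linearity of pushforward and the compatibility of $\phi$ with the correspondence action that justify this step.
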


\begin{proof}
 Observe that every element $\alpha\in BP$ annihilating $\rho$ will annihilate $\CH^*_{\zz_{(p)}}(M_F)$ in $GrBP$. It remains to apply
 Proposition \ref{chow-level-r} (1). 
 \Qed
\end{proof}

\begin{prop}
 \label{JM-small-dim}
 Let $M\!=\!(X,\rho)\in Ob(Chow(k;\zz_{(p)}))$ be a torsion motive of $p$-level $n$ with $\displaystyle\ddim(X)\leq\frac{(2p-1)(p^{n-1}-1)}{p-1}$. Then $\ov{J}(M)=I(n)=
 \sqrt{J(M)}$.
\end{prop}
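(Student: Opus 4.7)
The equality $I(n)=\sqrt{J(M)}$ comes for free from the $p$-level hypothesis: by Proposition \ref{JM-JMt} combined with Corollary \ref{Zp-nur}, $\sqrt{\ov{J}(M)}=\sqrt{J(M)}=I(n)$. Since $I(n)$ is prime, the inclusion $\ov{J}(M)\subset\sqrt{\ov{J}(M)}=I(n)$ is automatic, so the real content of the proposition is the reverse inclusion $I(n)\subset\ov{J}(M)$.

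To install $v_{n-1}$ inside $\ov{J}(M)$ modulo $I(n-1)$, I would apply Proposition \ref{chow-level-r}(2) with $Y=X^{\times 2}$, $z=\rho\in\CH^{\ddim(X)}(X^{\times 2})$ (so $r=\ddim(X)$), $m=n-1$. Since $v_{n-1}\in I(n)=\sqrt{\ov{J}(M)}$, some power $v_{n-1}^{K}$ lies in $\ov{J}(M)$, providing the initial $\alpha\equiv v_{n-1}^{K}\,(\mathrm{mod}\,I(n-1))$. The dimension hypothesis $\ddim(X)\leq\frac{(2p-1)(p^{n-1}-1)}{p-1}$ is exactly the bound needed to let the symmetric-operation reduction in Proposition \ref{chow-level-r}(2) run all the way down to $l=1$, producing $\alpha_{1}\in\ov{J}(M)$ with $\alpha_{1}\equiv v_{n-1}\,(\mathrm{mod}\,I(n-1))$.

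For the remaining generators $p,v_{1},\ldots,v_{n-2}$ of $I(n)$, I would exploit that $\ov{J}(M)$ is Landweber--Novikov invariant (Observation \ref{ovJ-LN}) and, by Proposition \ref{rJMt-Ipn} (applied after $p$-localization), a single invariant $I(n)$-primary ideal of $BP$. By Landweber's classification of invariant primary ideals of $BP$, any such ideal must have the form $(p,v_{1},\ldots,v_{n-2},v_{n-1}^{j})$ for some $j\geq 1$. The image of $\ov{J}(M)$ in $BP/I(n-1)\cong\ff_{p}[v_{n-1},v_{n},\ldots]$ would then be the principal ideal $(v_{n-1}^{j})$, and since it must contain the image $v_{n-1}$ of $\alpha_{1}$, this forces $j=1$, yielding $\ov{J}(M)=(p,v_{1},\ldots,v_{n-1})=I(n)$.

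The hard part will be justifying the last step: either invoking the classification of invariant $I(n)$-primary ideals of $BP$, or, equivalently, executing an explicit descending induction $m=n-1,\ldots,0$ that extracts $p,v_{1},\ldots,v_{n-2}$ from $\alpha_{1}$ one at a time by applying Landweber--Novikov operations whose action on $v_{m+1}$ has leading term $v_{m}$ modulo $I(m)$, while carefully tracking the error terms contributed by $\gamma:=\alpha_{1}-v_{n-1}\in I(n-1)$.
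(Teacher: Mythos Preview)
Your overall architecture matches the paper's: reduce to showing $I(n)\subset\ov{J}(M)$, apply Proposition~\ref{chow-level-r}(2) with $z=\rho$, $m=n-1$, $r=\ddim(X)$ to obtain $\alpha_{1}\in\ov{J}(M)$ with $\alpha_{1}\equiv v_{n-1}\pmod{I(n-1)}$, and then use Landweber--Novikov invariance of $\ov{J}(M)$ to extract the remaining generators. Your option~(b) is precisely what the paper does.

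Your option~(a), however, does not go through as written. First, Proposition~\ref{rJMt-Ipn} only gives a primary decomposition $\ov{J}(M)=Q_{1}\cap\cdots\cap Q_{r}$ with each $\sqrt{Q_{i}}$ some $I(m_{i})$; the fact that $\sqrt{\ov{J}(M)}=I(n)$ forces $\min_{i}m_{i}=n$ but does not rule out embedded components with $m_{i}>n$, so $\ov{J}(M)$ need not be primary. Second, even restricting to invariant $I(n)$-primary ideals, the asserted classification is false: for instance $I(n)^{2}$ is invariant and $I(n)$-primary but is not of the form $(p,v_{1},\ldots,v_{n-2},v_{n-1}^{j})$, since $p\notin I(n)^{2}$. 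So option~(a) should be dropped.

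For option~(b), the paper's execution is cleaner than the descending induction you sketch. Since $\alpha_{1}$ may be taken homogeneous of degree $|v_{n-1}|=p^{n-1}-1$, every positive-degree element of $I(n-1)$ already lies in $I^{2}$ (where $I=I(\infty)=(p,v_{1},v_{2},\ldots)$, because $BP/I=\ff_{p}$ is concentrated in degree $0$), so in fact $\alpha_{1}\equiv v_{n-1}\pmod{I^{2}}$. One then applies suitable Landweber--Novikov operations (as in \cite[Proposition~3.1]{ACMLR}) to produce, for each $0\leq i\leq n-1$, an element $\alpha_{i}\in\ov{J}(M)$ with $\alpha_{i}\equiv v_{i}\pmod{I^{2}}$. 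Finally, an \emph{increasing} induction on $i$ finishes: the degree-$|v_{i}|$ part of $I^{2}$ involves only $v_{0},\ldots,v_{i-1}$, so once $v_{0},\ldots,v_{i-1}\in\ov{J}(M)$ one gets $v_{i}=\alpha_{i}-(\alpha_{i}-v_{i})\in\ov{J}(M)$. This avoids having to track error terms step by step.
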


\begin{proof} 
By Proposition \ref{chow-level-r} (2) applied to $z=\rho$, we
know that there exists some $\alpha\equiv v_{n-1}(mod\, I(n-1))\in \ov{J}(M)$.
Then it follows from Observation \ref{ovJ-LN} that 
$I(n)\subset\ov{J}(M)$. 
Indeed, if $\alpha\equiv v_{n-1}(mod\, I(n-1))$, then
$\alpha\equiv v_{n-1}(mod\, I^2)$, where $I=I(\infty)$ is the ideal generated by all $v_i$s. Applying appropriate Landweber-Novikov operations to $\alpha$, we
get elements $\alpha_i\equiv v_i(mod\, I^2)$ in $\ov{J}(M)$, for any $0\leq i\leq n-1$ - see, for example, \cite[Proposition 3.1]{ACMLR}. Then an increasing induction on $i$ shows that $v_i\in\ov{J}(M)$, for any $0\leq i\leq n-1$.
Finally, in light of Proposition
\ref{JM-JMt}, our embedding is an equality.
 \Qed
\end{proof}

In addition, we can observe that torsion motives are
not present in the motives of curves.

\begin{prop}
 \label{no-in-curves}
 If $M=(X,\rho)$ is a torsion motive, then $\ddim(X)>1$.
\end{prop}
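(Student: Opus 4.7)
The plan is to treat $\ddim(X)=0$ and $\ddim(X)=1$ separately, showing in each case that $\op{End}(M(X))$ admits no non-zero torsion idempotent.

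For $\ddim(X)=0$, the variety $X$ is a finite disjoint union of spectra of field extensions, so $\op{End}(M(X))=\CH^0(X\times X)$ is a direct sum of copies of $\zz$, hence torsion-free, and the statement is immediate.

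For $\ddim(X)=1$ with $X$ a smooth projective curve, I would first reduce to the case $k=\kbar$: since $X\times X$ is proper and (after possibly passing to the components defined over $\op{H}^0(X,\co_X)$) geometrically connected, the five-term exact sequence of Hochschild--Serre combined with Hilbert~90 produces an injection $\op{Pic}(X\times X)\hookrightarrow \op{Pic}(X_{\kbar}\times X_{\kbar})^{G_k}$, with a Shapiro-type variant in the non-geometrically-connected case. Over $\kbar$ the group $\op{NS}(X\times X)$ is torsion-free, because the $\ell$-adic cycle class map embeds $\op{NS}(X\times X)\otimes\zz_\ell$ into $\op{H}^2_{\et}(X\times X,\zz_\ell(1))$, and the latter is torsion-free by the Künneth formula (the \'etale cohomology of a smooth projective curve over $\kbar$ is torsion-free at every prime $\ell$). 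Hence any torsion element $\rho\in \CH^1(X\times X)=\op{End}(M(X))$ lies in $\op{Pic}^0(X\times X)$, which by the seesaw theorem equals $J(X)\times J(X)$ via pullback along the two projections, so $\rho=\op{pr}_1^{\,*}L_1+\op{pr}_2^{\,*}L_2$ for some $L_1,L_2\in J(X)(\kbar)$.

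The heart of the argument is then the computation $\rho\circ\rho=0$ in $\CH^1(X\times X)$. Expanding the correspondence composition $(p_{13})_{*}(p_{12}^{*}\rho\cdot p_{23}^{*}\rho)$ on $X^{\times 3}$ produces four summands: one is proportional to $L_1\cdot L_2\in\CH^2(X)=0$ and so vanishes; each of the remaining three is $(p_{13})_{*}$ applied to a $1$-cycle whose restriction to a generic fibre of $p_{13}$ either has degree $\deg L_i=0$ (because $L_i\in\op{Pic}^0$) or contracts a positive-dimensional direction, and so the pushforward vanishes. Thus $\rho\circ\rho=0$, and the idempotent equation $\rho=\rho\circ\rho$ forces $\rho=0$. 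The main obstacle in the argument is the bookkeeping in this vanishing computation: to kill every one of the four terms one must use simultaneously that the $L_i$ have degree zero and that $\CH^2$ of a curve vanishes.
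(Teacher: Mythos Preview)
Your argument is correct and takes a different route from the paper's. The paper argues that the torsion projector $\rho$ acts trivially on $\CH^0(X_E)$ (torsion-free target) and on $\CH_0(X_E)$ (because the induced map $J\to J$ is a torsion element of the torsion-free ring $\End_{\text{ab.\,var.}}(J)$), and concludes $\rho=0$ from there. You instead pass to $\kbar$, use torsion-freeness of $\op{NS}(X_{\kbar}\times X_{\kbar})$ to place $\rho_{\kbar}$ inside $\op{Pic}^0(X_{\kbar}\times X_{\kbar})\cong J(\kbar)\times J(\kbar)$, and then verify $\rho\circ\rho=0$ by the explicit four-term expansion of the composition. Your approach is more hands-on: it avoids invoking the structure of $\End(J)$ and makes transparent that \emph{any} torsion element of $\End(M(X))$ (not just an idempotent) squares to zero. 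The paper's approach is shorter and stays over the ground field; on the other hand, your direct computation makes the last implication cleaner, since the step from ``$\rho$ acts trivially on all $\CH^*(X_E)$'' to ``$\rho=0$'' is not true for arbitrary correspondences (e.g.\ $p_1^*L$ with $L\in J(k)\setminus\{0\}$ acts trivially but is nonzero) and in the paper's argument tacitly relies on the idempotent hypothesis once more, say via the block-triangular shape of $\End(M(X))$ relative to the Chow--K\"unneth decomposition.
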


\begin{proof}
 The fact that torsion motives can't be direct summands of 0-dimensional varieties is obvious, since the Chow-endomorphism rings of the latter are torsion-free.
 
 Suppose $X$ is a curve. To prove that $\rho=0$ it is sufficient to show that it acts trivially on Chow groups
 of $X$ over any field extension $E/k$. The action on $\CH^0(X)=\zz$ is clearly zero. By the same reason, the action on $\CH_0(X)$ passes through 0-cycles of degree $0$ and so, defines an action on $J(E)$ - the $E$-rational
 points of the Jacobian of $X$. It is given by an algebraic map $J\row J$ which should be the projection to
 the zero point, as it is torsion. Hence, $\rho=0$.
 \Qed
\end{proof}

Torsion motives do exist.

\begin{exa}
 \label{GBB}
 In \cite{BGS}, \cite{GS} and \cite{AO} examples of the, so-called, 'quasiphantom' subcategories
 of $D^b(coh(S))$ were constructed for some surfaces (over the field $\cc$). Namely, for Godeaux, Beauville and Burniat surfaces.
 It was shown by Gorchiskiy-Orlov in \cite[Proposition 2.2]{GO} that Chow motives of respective surfaces contain direct summands $M$ such that $\CH^*(M)=\CH^1(M)=\op{Pic}(S)_{tors}$. The latter group is $\zz/5\zz$, $(\zz/5\zz)^2$
 and $(\zz/2\zz)^6$, for Godeaux, Beauville, respectively, Burniat surface. Moreover, it follows from \cite[Proposition 2.3]{GO} that any $n\in\nn$ which annihilates $\op{Pic}(S)_{tors}$, annihilates $M$ itself.
 Thus, the motives $M$ are torsion motives. These were used by Gorchinsky-Orlov to construct the first known example of a 'phantom category' - see \cite{GO}. 
 
 Since our motives $M$ are killed by $5$, respectively, $2$, their ideals $J(M)$ involve single primes only. Moreover, since their K-motives (=$K(1)$-motives) are non-trivial (as $K_0(M)=\op{Pic}(S)_{tors}\neq 0$), it follows
 from Proposition \ref{In-Morava} that $p$-levels of $M$ are $1$. The same can be seen from Theorem \ref{level-size}, as
 motives of surfaces can't contain torsion motives of level
 $>1$. Thus, $\sqrt{J(M)}$ is $I(5,1)$, $I(5,1)$,
 $I(2,1)$ for the Godeaux, Beauville, respectively Burniat torsion motive. 
 
\end{exa}

\section{Motivic cohomology of torsion motives}
\label{section-three}

More subtle results on torsion motives can be obtained with
the help of their motivic cohomology

\subsection{Pre-morphisms of theories}
\label{subsection-three-one}

The results of this subsection hold over any field (of any characteristic).

\begin{defi}
 Let $A^*$ and $B^*$ be oriented cohomology theories in the sense of Levine-Morel - \cite[Definition 1.1.2]{LM} (no (LOC) axiom). A 'pre-morphism' of theories $G:A^*\row B^*$
 is an additive morphism of functors on $\smk$ which, in addition, respects maps of multiplication by the $1$-st Chern classes of line bundles.
\end{defi}

\begin{prop}
\label{laz-lin}
 Let $k$ be any field. A 'pre-morphism' of theories is $\laz$-linear.
\end{prop}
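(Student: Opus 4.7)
The plan is to reduce $\laz$-linearity to identities involving the universal coefficients of the formal group law, and to extract each such identity from a computation on $X\times\pp^N\times\pp^N$.

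Recall that the $\laz$-action on $A^*(X)$ factors through the classifying map $\laz\row A^*(\spec(k))$ of the FGL of $A^*$, followed by pullback along $\pi_X\colon X\row\spec(k)$ and multiplication; and analogously for $B^*$. Writing $\laz$ as the quotient of $\zz[a_{ij}\mid i,j\geq 1]$ by the relations making the universal $F(x,y)=x+y+\sum a_{ij}x^iy^j$ a commutative associative formal group law, we see that $\laz$ is generated as a $\zz$-algebra by the $a_{ij}$. The additivity of $G$ together with the computation
\[
G(u_A v_A\cdot\alpha)=G(u_A\cdot(v_A\alpha))=u_B\cdot G(v_A\alpha)=u_B v_B\cdot G(\alpha),
\]
applied inductively, reduces the problem to showing $G(a_{ij}^A\cdot\alpha)=a_{ij}^B\cdot G(\alpha)$ for all $i,j\geq 1$ and all $\alpha\in A^*(X)$.

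For this, let $Y=X\times\pp^N\times\pp^N$ with $N$ large, let $p_0\colon Y\row X$ and $p_k\colon Y\row\pp^N$ ($k=1,2$) be the projections, and set $L_k=p_k^*\co(1)$, $L=L_1\otimes L_2$. Write $x=c_1^A(L_1)$, $y=c_1^A(L_2)$ in $A^*(Y)$, and $\wt{x}=c_1^B(L_1)$, $\wt{y}=c_1^B(L_2)$ in $B^*(Y)$. The FGL of $A^*$ yields, in $A^*(Y)$,
\[
c_1^A(L)=x+y+\sum_{i,j\geq 1}a_{ij}^A\,x^iy^j,
\]
with the sum finite because $x^{N+1}=y^{N+1}=0$, and the analogous identity holds in $B^*(Y)$. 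Multiplying by $\beta:=p_0^*\alpha$ and applying $G$, and then using additivity, the naturality of $G$ under $p_0^*$, and the hypothesis that $G$ commutes with multiplication by any $c_1^A$ of a line bundle (iterated), the identity becomes
\[
c_1^B(L)\cdot G(\beta)=\wt{x}\cdot G(\beta)+\wt{y}\cdot G(\beta)+\sum_{i,j\geq 1}\wt{x}^i\wt{y}^j\cdot p_0^*G(a_{ij}^A\cdot\alpha).
\]
Comparing with the FGL expansion $c_1^B(L)=\wt{x}+\wt{y}+\sum a_{ij}^B\wt{x}^i\wt{y}^j$ and using the projective bundle formula $B^*(Y)\cong B^*(X)[\wt{x},\wt{y}]/(\wt{x}^{N+1},\wt{y}^{N+1})$, we may equate coefficients of $\wt{x}^i\wt{y}^j$ for $1\leq i,j\leq N$; the injectivity of $p_0^*\colon B^*(X)\row B^*(Y)$ then yields $G(a_{ij}^A\cdot\alpha)=a_{ij}^B\cdot G(\alpha)$. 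Letting $N\to\infty$ covers all $i,j\geq 1$.

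The delicate point is the simultaneous bookkeeping across the two theories: the two FGLs live in different coefficient rings, and one must isolate the $a_{ij}$-coefficients on each side using the projective bundle formula for that theory alone. No ring-homomorphism property is required of $G$; the single hypothesis that it respects multiplication by first Chern classes of line bundles is, through the universality of the FGL, exactly what is needed to match the universal coefficients of $A^*$ and $B^*$.
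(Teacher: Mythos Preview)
Your proof is correct and follows essentially the same approach as the paper: both extract the identity $G(a_{ij}^A\cdot\alpha)=a_{ij}^B\cdot G(\alpha)$ by comparing coefficients in the formal group law expansion on $X\times\pp^N\times\pp^N$ (the paper phrases this via the Segre map, which simply realizes $L_1\otimes L_2$ as the pullback of $\co(1)$), and then passes from the generators $a_{ij}$ to all of $\laz$. Your explicit induction over monomials in the $a_{ij}$ makes transparent the step the paper compresses into the phrase ``generate the Lazard ring''.
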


\begin{proof}
 Consider the map 
 $$
 X\times\left(\pp^{\infty}\times\pp^{\infty}\stackrel{Segre}{\row}\pp^{\infty}\right).
 $$
 Let $x^{A,B}, y^{A,B}, t^{A,B}$ be the 1-st Chern classes
 of the line bundles $O(1)$ lifted from various components,
 in $A^*$, respectively, $B^*$-theory. Then
 $Segre^*(t^C)=F_C(x^C,y^C)$, where $F_C$ is the formal group law of the oriented theory $C$.
 Let $\alpha\in A^*(X)$. Then, using the fact that $G$ respects multiplication by the 1-st Chern classes, we get:
 \begin{equation*}
 \begin{split}
 &\sum_{i,j}G(\alpha\cdot a_{i,j}^A)\cdot (x^B)^i(y^B)^j=
 \sum_{i,j}G(\alpha\cdot a_{i,j}^A(x^A)^i(y^A)^j)=
 G(\alpha\cdot F_A(x^A,y^A))=\\
 &G(\alpha\cdot Segre^*(t^A))=
 (id_X\times Segre)^*G(\alpha\cdot t^A)=
 (id_X\times Segre)^*(G(\alpha)\cdot t^B)=\\
 &G(\alpha)\cdot Segre^*(t^B)=G(\alpha)\cdot F_B(x^B,y^B)=
 \sum_{i,j}G(\alpha)\cdot a_{i,j}^B\cdot (x^B)^i(y^B)^j.
 \end{split}
 \end{equation*}
 Comparing coefficients at $(x^B)^i(y^B)^j$, we obtain:
 $G(\alpha\cdot a_{i,j}^A)=G(\alpha)\cdot a_{i,j}^B$.
 Since, the coefficents of the universal formal group law
 generate the Lazard ring $\laz$ additively, we get that
 $G$ is $\laz$-linear.
 \Qed
\end{proof}

\begin{prop}
\label{Chern-push}
 Let $k$ be an arbitrary field and $G:A^*\row B^*$ be an additive operation, where $A^*$ and $B^*$ are oriented cohomology theory in the sense of Levine-Morel and $B^*$, in addition, satisfies the weak form of (LOC) axiom (exactness in the middle) as in Panin \cite[Definition 1.1.7]{P-RR}. 
 Then the following conditions are equivalent:
 \begin{itemize}
  \item[$(1)$] $G$ respects push-forwards.
  \item[$(2)$] $G$ commutes with maps of multiplication by the $1$-st Chern classes of line bundles.
 \end{itemize} 
\end{prop}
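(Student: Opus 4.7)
The plan is to treat the two implications separately. Direction $(1)\Rightarrow(2)$ is the easy one. For a line bundle $L$ on a smooth variety $X$, let $\pi\colon E\row X$ denote the total space of $L$ and $s\colon X\row E$ the zero section. Both $X$ and $E$ lie in $\smk$, so $s$ is a morphism in this category, and the self-intersection formula (valid in any oriented cohomology theory in the sense of Levine--Morel) gives $s^*s_*\alpha=c_1(L)\cdot\alpha$. Since $G$ is a natural transformation on $\smk$, it commutes with $s^*$, and by $(1)$ it commutes with $s_*$; therefore $G(c_1^A(L)\cdot\alpha)=s^*s_*G(\alpha)=c_1^B(L)\cdot G(\alpha)$.

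For the converse $(2)\Rightarrow(1)$, the hypothesis makes $G$ into a pre-morphism, so by Proposition \ref{laz-lin} it is $\laz$-linear. I would factor any projective morphism $f\colon Y\row X$ as a closed embedding $Y\hookrightarrow\pp^N\times X$ followed by the projection to $X$, and reduce to these two basic cases. For the projection $p\colon\pp(V)\row X$ from a projective bundle, the projective bundle theorem lets one write any class as $\sum_i\xi^i\cdot p^*\alpha_i$ with $\xi=c_1(\co(1))$; the projection formula, the universal expression of $p_*(\xi^i)$ as a polynomial in the Chern classes of $V$ with coefficients in $\laz$, and the splitting principle reducing those Chern classes to products of $c_1$'s of line bundles, together show that $G$ commutes with $p_*$.

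For a closed embedding $i\colon Z\hookrightarrow X$ of smooth varieties I would use deformation to the normal cone: $i$ degenerates to the zero section $i_0\colon Z\hookrightarrow N_{Z/X}$, whose pushforward equals multiplication by the Thom class, which in turn is a universal polynomial in Chern classes of $N_{Z/X}$ via the splitting principle, hence compatible with $G$ by $(2)$ and $\laz$-linearity. The weak (LOC) axiom on $B^*$ enters precisely here, through the exactness in the middle of the localization sequence associated with the deformation diagram; it is what allows one to transport compatibility with $(i_0)_*$ on the normal bundle side to compatibility with $i_*$ on the generic fibre. I expect this final reduction, rather than the Chern-class bookkeeping, to be the main technical obstacle, as it requires a careful diagram chase through a sequence that is known to be exact only in the middle.
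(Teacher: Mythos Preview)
Your overall architecture matches the paper's: the easy direction $(1)\Rightarrow(2)$ via the zero section of a line bundle and the self-intersection formula is exactly what the paper does, and for $(2)\Rightarrow(1)$ both you and the paper factor an arbitrary projective morphism as a closed embedding into $\pp^N\times X$ followed by the projection. For the projection, both arguments use the projective bundle theorem and $\laz$-linearity (Proposition~\ref{laz-lin}); note that since the projection is from a \emph{trivial} bundle, the Segre classes are just the Lazard-ring constants $[\pp^{n-i}]$, so your invocation of the splitting principle for Chern classes of $V$ is unnecessary here, though not wrong.

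The genuine divergence is in the regular-embedding step. You propose to run deformation to the normal cone directly: reduce to the zero section of the normal bundle (where $i_{0*}$ is multiplication by the top Chern class, hence handled by $(2)$ and the splitting principle), and then use the weak (LOC) axiom on $B^*$ to transport the compatibility along the deformation. The paper instead black-boxes this entire argument by invoking the general Riemann--Roch theorem for additive operations \cite[Theorem 5.19]{PO}, which already outputs a closed residue formula
\[
G(f_*(\alpha))=f_*\operatornamewithlimits{Res}_{t=0}
\frac{G\bigl(\textstyle\prod_i x_i^A\cdot\alpha\bigr)\bigl(x_i^B=t+_B\mu_i^B\bigr)\,\omega^B_t}
{t\cdot\prod_i(t+_B\mu_i^B)},
\]
and then observes that under hypothesis $(2)$ the numerator factors as $G(\alpha)\cdot\prod_i(t+_B\mu_i^B)$, cancelling to give $f_*(G(\alpha))$ in one line. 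The deformation-to-the-normal-cone work and the use of weak (LOC) are already baked into that cited theorem. So your route is correct and in a sense more self-contained, but you are right to flag the diagram chase through the middle-exact localisation sequence as the place where care is needed; the paper's approach trades that effort for a dependence on the external Riemann--Roch result.
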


\begin{proof}
$(1)\row (2)$: Let $L$ be a line bundle on $X$. We will denote the total space of it by the same symbol $L$. The zero section
provides a regular closed embedding $f:X\row L$, s.t.
$f_*f^*$ coincides with the multiplication by the 1-st Chern class of $L$. Since $G$ commutes with $f^*$ and $f_*$, it commutes with the multiplication by this Chern class.

$(2)\row (1)$: Since any projective morphism can be decomposed into a composition of a regular embedding and a
trivial projective fibration: 
$X\row\pp^n\times Y\row Y$, it is sufficient to check the statement for these two types of maps.

1) Let $f:X\row Y$ be a regular emebedding with the normal
bundle $N_f$. Let $\mu_i^{B}$, $i\in\ov{n}=\{1,\ldots,n\}$ be the $B^*$-{\it roots} of $N_f$.
Then by the general Riemann-Roch Theorem - \cite[Theorem 5.19]{PO}, for any $\alpha\in A^*(X)$, 
\begin{equation*}
 G(f_*(\alpha))=f_*\operatornamewithlimits{Res}_{t=0}
 \frac{G(\prod_{i\in\ov{n}}x_i^A\cdot\alpha)
 (x_i^B=t+_B\mu_i^B|_{i\in\ov{n}})\cdot\omega^B_t}
 {t\cdot\prod_{i\in\ov{n}}(t+_B\mu_i^B)},
\end{equation*}
where $x_i^{A,B}$ are the 1-st Chern classes of the line bundle $O(1)$ lifted from the $i$-th component in
$A^*(X\times(\pp^{\infty})^{\times n})$, respectively,
$B^*(X\times(\pp^{\infty})^{\times n})$, and we make a substitution instead of $x_i^B$-variables.
Since $G$ commutes with the multiplication by the 1-st Chern classes, this can be rewritten as 
\begin{equation*}
 f_*\operatornamewithlimits{Res}_{t=0}
 \frac{G(\alpha)\cdot\prod_{i\in\ov{n}}(t+_B\mu_i^B)
 \cdot\omega^B_t}{t\cdot\prod_{i\in\ov{n}}(t+_B\mu_i^B)}=
 f_*(G(\alpha)).
\end{equation*}

2) Let $\pi:Y\times\pp^n\row Y$ be the projection and 
$\alpha\in A^*(Y\times\pp^n)$. By the Projective Bundle Theorem, it can be uniquely written as
$\alpha=\sum_{i=0}^n\pi^*(a_i)\cdot(\xi^A)^i$, where $\xi^A=c_1^A(O(1))$ and $a_i\in A^*(Y)$. Then, by the projection formula,
$\pi_*(\alpha)=\sum_{i=0}^na_i\cdot [\pp^{n-i}]^A$.
So, by Proposition \ref{laz-lin} and (2), 
\begin{equation*}
\begin{split}
&G(\pi_*(\alpha))=\sum_{i=0}^nG(a_i\cdot [\pp^{n-i}]^A)=
\sum_{i=0}^nG(a_i)\cdot [\pp^{n-i}]^B=\\
&\pi_*(\sum_{i=0}^n\pi^*G(a_i)\cdot(\xi^B)^i)=
\pi_*(G(\alpha)).
\end{split}
\end{equation*}
 \Qed
\end{proof}

\begin{rem}
 Proposition \ref{Chern-push} shows that, if the target theory $B^*$ has (a weak form of) localisation, then 'pre-morphisms' of theories can be defined as additive transformations respecting both pull-backs and push-forwards (for projective morphisms). This agrees with and provides a simplification of the \cite[Definition 2.10]{PO}. 
 So, the only difference with genuine 'morphisms' of theories is that 'pre-morphisms' don't have to respect the
 multiplicative structure.
 \Red
\end{rem}

\subsection{Milnor's operations}

In this Subsection our ground field is any field of characteristic zero.

Let 
$$
Q_r:\hm^{a,b}(X,\ff_p)\row\hm^{a+2p^r-1,b+p^r-1}(X,\ff_p)
$$ 
be the motivic analogues of Milnor's operations
introduced by Voevodsky in \cite{VoOP}.
These operations are differentials: $Q_r\circ Q_r=0$ and
anti-commute with each other: $Q_r\circ Q_l=-Q_l\circ Q_r$,
for $r\neq l$. For $p=2$ and $I\subset\ov{\nn}=\nn\cup\{0\}$, let us denote as $Q_I$ the composition $\circ_{i\in I} Q_i$. Then the behavior of Milnor's operations with respect to the multiplicative structure is given by: $Q_r(x\cdot y)=\mu(\Delta(Q_r)(x\otimes y))$, where
\begin{equation}
\label{coprod}
  \Delta(Q_r)=
  \begin{cases}
   Q_r\otimes 1+1\otimes Q_r,\hspace{2mm}\text{for}\,\,p>2;\\
   \sum_{2^I+2^J=2^r}Q_I\otimes Q_J\cdot\{-1\}^{|I|+|J|-1},
   \hspace{2mm}\text{for}\,\,p=2,
  \end{cases}
\end{equation}
where $2^I=\sum_{i\in I}2^i$.

\begin{prop}
 \label{milnor-chow-linear}
 Milnor's operations are Chow group linear.
\end{prop}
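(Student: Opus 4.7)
The plan is to reduce Chow-linearity to the vanishing $Q_r(z)=0$ for every mod-$p$ Chow class $z$, and then apply the coproduct formula (\ref{coprod}).

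For the vanishing, take $z\in\CH^n(X)\otimes\ff_p=\hm^{2n,n}(X,\ff_p)$. Then $Q_r(z)$ lives in $\hm^{2n+2p^r-1,n+p^r-1}(X,\ff_p)$; setting $b=n+p^r-1$, this is $\hm^{2b+1,b}(X,\ff_p)$. The Bockstein long exact sequence gives a short exact sequence
$$
0\row \hm^{2b+1,b}(X,\zz)/p\row \hm^{2b+1,b}(X,\ff_p)\row \hm^{2b+2,b}(X,\zz)[p]\row 0.
$$
Bloch's identification $\hm^{a,b}(X,\zz)=\CH^b(X,2b-a)$, together with the vanishing of higher Chow groups in negative simplicial index, yields $\hm^{2b+1,b}(X,\zz)=\CH^b(X,-1)=0$ and $\hm^{2b+2,b}(X,\zz)=\CH^b(X,-2)=0$. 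Hence $Q_r(z)=0$.

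Now apply (\ref{coprod}) to $z\cdot x$. For $p>2$, $Q_r$ is a derivation and $Q_r(z\cdot x)=Q_r(z)\cdot x+z\cdot Q_r(x)=z\cdot Q_r(x)$ by the vanishing. For $p=2$,
$$
Q_r(z\cdot x)=\sum_{2^I+2^J=2^r}Q_I(z)\cdot Q_J(x)\cdot\{-1\}^{|I|+|J|-1}.
$$
For any nonempty $I$, the composition $Q_I$ applied to $z$ factors through some $Q_i(z)$ (for $i\in I$) which is zero by the vanishing result; hence $Q_I(z)=0$. Only the term $(I,J)=(\emptyset,\{r\})$ survives, giving $z\cdot Q_r(x)$.

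There is essentially no serious obstacle: the argument turns entirely on the fact that $Q_r$ sends the Chow-diagonal bidegree $(2n,n)$ into the bidegree $(2b+1,b)$, which sits one step above the diagonal and is thus zero for smooth $X$. The only point requiring slight care is the $p=2$ case, where one must verify that every nonempty-$I$ term in the coproduct vanishes; this follows at once because the innermost $Q_i(z)$ already lands in a zero group.
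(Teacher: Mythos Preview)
Your proof is correct and follows essentially the same approach as the paper: first show $Q_r$ vanishes on Chow classes because the target bidegree $(2b+1,b)$ lies above the slope-$2$ line where motivic cohomology of smooth varieties vanishes, then apply the coproduct formula (\ref{coprod}). You simply give more detail—invoking the Bockstein sequence and Bloch's higher Chow groups for the vanishing, and spelling out the $p=2$ case explicitly—whereas the paper states the vanishing in one line and leaves the coproduct conclusion to the reader.
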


\begin{proof}
 Milnor's operations act trivially on Chow groups of smooth varieties, as
 their bi-degree is $(p^r-1)[2p^r-1]$ and so, the potential target resides in bi-degrees, where smooth varietieties have no motivic cohomology (above the slope $=2$ line). 
 Then the coproduct formula (\ref{coprod}) shows that
 $Q_r(u\cdot\alpha)=u\cdot Q_r(\alpha)$, for any Chow group
 element $u$. 
 \Qed
\end{proof}

As 1-st Chern classes of line bundles reside in Chow groups, we immediately get the following.

\begin{cor}
 \label{milnor-premorphism}
 Milnor's operations are pre-morphisms of theories.
\end{cor}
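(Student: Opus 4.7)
The plan is to simply verify the two clauses in the definition of a pre-morphism (additivity and commutation with multiplication by first Chern classes), using the preceding proposition to handle the second clause.

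First I would note that $Q_r$ is, by its very construction in \cite{VoOP}, an additive natural transformation of the functor $\hm^{*,*}(-,\ff_p)$ from $\smk$ to bigraded abelian groups; so it is an additive morphism of functors on $\smk$, which is the first requirement.

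For the second requirement, I would observe that for a line bundle $L$ on a smooth variety $X$, the first Chern class $c_1(L)$ lives in $\hm^{2,1}(X,\ff_p)=\CH^1(X)\otimes\ff_p$, i.e.\ it is a Chow group element. Proposition \ref{milnor-chow-linear} then gives
\[
 Q_r(c_1(L)\cdot\alpha)=c_1(L)\cdot Q_r(\alpha)
\]
for every $\alpha\in\hm^{*,*}(X,\ff_p)$, which is exactly the statement that $Q_r$ commutes with the operator of multiplication by $c_1(L)$. Combining the two observations yields the corollary.

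There is really no obstacle here: the content is entirely packaged in Proposition \ref{milnor-chow-linear}, and the corollary is just the observation that first Chern classes sit in the Chow-linear part of motivic cohomology. The only thing to keep in mind is that we are working with mod-$p$ coefficients, so that $c_1(L)$ is understood as its reduction in $\hm^{2,1}(X,\ff_p)$; this is harmless since the pre-morphism axiom refers to the multiplication-by-$c_1(L)$ operator on the same cohomology theory that $G$ acts on.
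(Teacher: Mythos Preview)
Your proof is correct and follows the same approach as the paper: the corollary is deduced immediately from Proposition~\ref{milnor-chow-linear} via the observation that first Chern classes of line bundles are Chow group elements.
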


Taking into account that motivic cohomology satisfy the conditions of Proposition \ref{Chern-push},
combining further Proposition \ref{milnor-chow-linear} with
Proposition \ref{Chern-push} we obtain the following result (cf. \cite[Lemma 9.1]{YaABP}).

\begin{cor}
 \label{milnor-corresp}
 Milnor's operations commute with (push-forwards for) correspondences and so, define a morphism of functors on the category of Chow motives.
\end{cor}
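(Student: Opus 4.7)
The plan is to factor the action of a Chow correspondence $\alpha\in\CH^{*}(X\times Y)$ on motivic cohomology as
\begin{equation*}
\alpha_{*}=(p_{Y})_{*}\circ\bigl(\alpha\cdot(-)\bigr)\circ p_{X}^{*},
\end{equation*}
where $p_{X}$ and $p_{Y}$ are the two projections from $X\times Y$, and then to check that $Q_{r}$ commutes with each of the three constituents. Granting this, $Q_{r}\circ\alpha_{*}=\alpha_{*}\circ Q_{r}$, so in particular $Q_{r}$ commutes with the idempotent $\rho^{*}$ cut on $\hm^{*,*}(X,\ff_{p})$ by any Chow projector $\rho$, and therefore descends to a natural transformation of the cohomology functor on the category of Chow motives.

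Of the three pieces, commutation with $p_{X}^{*}$ is immediate from the definition of $Q_{r}$ as a cohomology operation, i.e.\ as a natural transformation of the functor $\hm^{*,*}(-,\ff_{p})$ on $\smk$. Commutation with multiplication by the Chow class $\alpha\in\CH^{*}(X\times Y)$ is exactly Proposition \ref{milnor-chow-linear}. Commutation with the push-forward $(p_{Y})_{*}$ along the smooth projective map $p_{Y}$ is obtained by combining Corollary \ref{milnor-premorphism} (which states that $Q_{r}$ is a pre-morphism of theories) with Proposition \ref{Chern-push}. The only point that deserves verification is that the target theory, Voevodsky's motivic cohomology, satisfies the weak (LOC) axiom of Panin: for a closed immersion $i:Z\hookrightarrow X$ of smooth schemes with open complement $j:U\hookrightarrow X$, the sequence $\hm^{*,*}(Z,\ff_{p})\to\hm^{*,*}(X,\ff_{p})\to\hm^{*,*}(U,\ff_{p})$ is exact in the middle. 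This is standard and follows from the Gysin (localization) triangle in $\dmk$; it is the only non-formal ingredient in the argument and is therefore the step that merits the most care.
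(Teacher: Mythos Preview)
Your argument is correct and matches the paper's own proof essentially line for line: decompose the action of a correspondence as pull-back, multiplication by a Chow class, and push-forward, then invoke Proposition \ref{milnor-chow-linear} for the middle step and Proposition \ref{Chern-push} (via Corollary \ref{milnor-premorphism}) for the push-forward. Your added remark verifying the weak (LOC) hypothesis for motivic cohomology is a welcome bit of care that the paper leaves implicit.
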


\begin{proof}
 Let $\varphi:X\rightsquigarrow Y$ be a correspondence (between smooth projective varieties). Then $\varphi_*$ is the composition of a pull-back, followed by a multiplication by a certain Chow group element, followed by a push-forward. By Propositions \ref{milnor-chow-linear} and
 \ref{Chern-push}, Milnor's operations commute with all these ingredients.
 \Qed
\end{proof}

\begin{rem}
 Of course, nothing of this sort is true for 'non-pure' motives, as Milnor's operations are not linear with respect to (non-pure) motivic cohomology elements.
 \Red
\end{rem}

Following Voevodsky, let us introduce the spectrum
${\mathbf\Phi}_r$ in $\shk$ from the distinguished triangle
$$
T^{p^r-1}\wedge{\mathbf\op{H}_{\ff_p}}\stackrel{u}{\lrow}{\mathbf\Phi}_r\stackrel{v}{\lrow}{\mathbf\op{H}_{\ff_p}}\stackrel{Q_r}{\lrow}
S^1_s\wedge T^{p^r-1}\wedge{\mathbf\op{H}_{\ff_p}},
$$
where ${\mathbf\op{H}_{\ff_p}}$ is the {\it Eilenberg-MacLane spectrum}.
It follows from \cite[Lemma 3.23]{VVMilnorConj} that
${\mathbf\Phi}_r$ has a structure of an $\op{MGL}$-module and so, an orientation compatible with this distinguished triangle. In other words, that the respective cohomology theory has a structure of push-forwards compatible with push-forwards for motivic cohomology with $\ff_p$-coefficients. Thus, we also have the action of (algebraic cobordism of Levine-Morel) $\Omega^*$-correspondences on our distinguished triangle.

\begin{prop}
 \label{corresp-Margolis}
 Let $X$ and $Y$ be smooth projective varieties and $\ffi^{BP}:X\rightsquigarrow Y$ be a $BP^*$-correspondence. Let $z\in BP$ be such $\nu_r$-element
 that $z\cdot\ffi^{BP}=0\in GrBP$. Then the Chow trace $\ffi^{CH}$ acts trivially on the $r$-th Margolis cohomology. That is, if $Q_r(x)=0$, for some $x\in\hm(X)$, then $\ffi^{CH}_*(x)=Q_r(y)$, for some $y\in\hm(Y)$.
\end{prop}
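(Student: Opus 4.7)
My plan is to lift the entire situation through the distinguished triangle defining ${\mathbf\Phi}_r$ and use the $MGL$-module structure on ${\mathbf\Phi}_r$ to construct the required $y\in\hm(Y)$ with $Q_r(y)=\varphi^{CH}_*(x)$.

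First, since $Q_r(x)=0$, the long exact sequence associated to the triangle yields a lift $\tilde{x}\in{\mathbf\Phi}_r^{*,*}(X)$ with $v_*(\tilde{x})=x$. By \cite[Corollary 2.8]{VY}, $\varphi^{BP}$ lifts to an $\Omega^*$-correspondence $\varphi^{\Omega}:X\rightsquigarrow Y$, and since ${\mathbf\Phi}_r$ is an $MGL$-module with orientation compatible with the triangle, $\varphi^{\Omega}$ acts on ${\mathbf\Phi}_r$-cohomology. Set $\tilde{y}:=\varphi^{\Omega}_*(\tilde{x})\in{\mathbf\Phi}_r^{*,*}(Y)$. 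Naturality of $v$ as an $MGL$-module map, together with the fact that the $MGL$-action on $\mathbf{\op{H}_{\ff_p}}$-cohomology factors through the Chow-correspondence action, gives $v_*(\tilde{y})=\varphi^{CH}_*(x)$. This already gives $Q_r(\varphi^{CH}_*(x))=0$, recovering Corollary \ref{milnor-corresp}.

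The key step is to strengthen this to $v_*(\tilde{y})\in\op{Im}(Q_r)$ using $z\cdot\varphi^{BP}=0\in GrBP$. The crucial structural ingredient is that $v_r$ acts as zero on $\mathbf{\op{H}_{\ff_p}}$ (since $\mathbf{\op{H}_{\ff_p}}=MGL/I(\infty)$ and $v_r\in I(\infty)$); hence $v\circ v_r:{\mathbf\Phi}_r\to\mathbf{\op{H}_{\ff_p}}$ vanishes and the triangle furnishes a factorization $v_r=u\circ\beta$ for some $\beta:{\mathbf\Phi}_r\to T^{p^r-1}\wedge\mathbf{\op{H}_{\ff_p}}$ (up to suspension). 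Working with the correspondence class $\tilde\varphi\in{\mathbf\Phi}_r^{*,*}(X\times Y)$ induced from $\varphi^{\Omega}$ via the unit $MGL\to{\mathbf\Phi}_r$, the hypothesis forces $z\cdot\tilde\varphi$ into strictly higher codim-of-support. Combining this vanishing with the factorization $v_r=u\circ\beta$ and the pull-back/push-forward expression of the correspondence action along the projections of $X\times Y$, one rewrites $v_*(\tilde{y})$ as $Q_r(y)$ for an explicit $y\in\hm(Y)$ constructed from $\beta_*(\tilde\varphi)$ and $x$, using the correspondence-compatibility of $Q_r$ (Corollary \ref{milnor-corresp}) to identify the result.

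The main obstacle will be this last step: converting the filtration-level identity $z\cdot\varphi^{BP}=0\in GrBP$ into the concrete motivic-cohomological identity $v_*(\tilde{y})=Q_r(y)$. I expect this to require reducing first to the case $z\equiv v_r\pmod{I(r)}$ (i.e., $k=1$) via symmetric operations in the spirit of Proposition \ref{chow-level-r}(2), and then carefully tracking the compatibility of the ${\mathbf\Phi}_r$-correspondence action with both the $u$-$v$ factorization of $v_r$-multiplication and the codim-of-support filtration inherited from $BP$ via the natural map $BP\to{\mathbf\Phi}_r$ on $X\times Y$.
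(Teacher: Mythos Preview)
Your framework is right and matches the paper: lift $x$ with $Q_r(x)=0$ to $\tilde{x}\in{\mathbf\Phi}_r(X)$, push forward via the $\Omega^*$-correspondence to $\tilde{y}\in{\mathbf\Phi}_r(Y)$ with $v(\tilde{y})=\ffi^{CH}_*(x)$, and then argue that $\ffi^{CH}_*(x)\in\kker(u)=\op{Im}(Q_r)$. But the last step in your proposal does not close, for two reasons.

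First, your factorization goes the wrong way. From $v\circ(\cdot v_r)=0$ you extract $\cdot v_r=u\circ\beta$ for \emph{some} $\beta$, and then try to build $y$ out of $\beta(\tilde{\ffi})$ and $x$. What is actually needed (and what the paper invokes from \cite[Proposition~3.24]{VVMilnorConj}) is the reverse identity: on ${\mathbf\Phi}_r$ one has $u\circ v=\cdot z$ up to a unit in $\ff_p$, for any $\nu_r$-element $z$. With this in hand the argument is one line:
\[
u(\ffi^{CH}_*(x))=u\big(v(\tilde{y})\big)=z\cdot\tilde{y}=(z\cdot\ffi^{BP})_*(\tilde{x}),
\]
so it suffices to know that $z\cdot\ffi^{BP}$ vanishes in ${\mathbf\Phi}_r(X\times Y)$. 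Your $\beta$ is in fact equal to $v$ up to a unit, but this is precisely the content of Voevodsky's computation; without it, knowing only that \emph{some} $\beta$ exists gives no handle on $u\circ v$, and your sketch of constructing $y$ from $\beta(\tilde\ffi)$ and $x$ does not produce an element with $Q_r(y)=\ffi^{CH}_*(x)$.

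Second, you are missing the passage from the $GrBP$-hypothesis to a ${\mathbf\Phi}_r$-vanishing. The paper uses that on the Chow line ${\mathbf\Phi}_r^{2*,*}(V)=BP^*(V)\otimes_{BP}\zz/p[v_r]/(v_r^2)$; since the correspondence class lives on this line, $z\cdot\ffi^{BP}=0\in GrBP(X\times Y)$ forces $z\cdot\ffi^{BP}=0\in{\mathbf\Phi}_r(X\times Y)$ directly. No symmetric operations are needed here, and your proposed reduction ``to $k=1$'' is moot: by hypothesis $z$ is already a $\nu_r$-element, i.e.\ $z\equiv v_r\pmod{I(r)}$. Once you replace your $\beta$-factorization by $u\circ v=\cdot z$ and add the ${\mathbf\Phi}_r^{2*,*}$ computation, your outline becomes exactly the paper's proof.
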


\begin{proof}
Combining the action of $\Omega^*$-correspondences with \cite[Proposition 3.24]{VVMilnorConj}, we get a diagram
$$
\xymatrix{
& {\mathbf\Phi}_r^{*+2m,*'+m}(Y) \ar[d]_{v} \ar[dr]^(0.45){\cdot z} & \\
{\mathbf\Phi}_r^{*,*'}(X) \ar[ur]^{\ffi^{BP}_*} \ar[dr]^(0.3){\cdot z} \ar[d]_{v}
& \hm^{*+2m,*'+m}(Y,\ff_p) \ar[r]_(0.4){u} &
{\mathbf\Phi}_r^{*+2m-2(p^r-1),*'+m-(p^r-1)}(Y)\\
\hm^{*,*'}(X,\ff_p) \ar[ur]^(0.6){\ffi^{CH}_*} 
|!{[u];[r]}\hole \ar[r]_(0.35){u} & {\mathbf\Phi}_r^{*-2(p^r-1),*'-(p^r-1)}(X) \ar[ru]^{\ffi^{BP}_*}
}
$$
commuting up to an invertible element of $\ff_p$ (here $m$ is the degree of $\ffi$). 
Since ${\mathbf\Phi}_r^{2*,*}(V)=BP^*(V)\otimes_{BP}\zz/p[v_r]/(v_r^2)$, we see that, for $x\in BP^*(V)$,
if $z\cdot x=0\in GrBP(V)$, then $z\cdot x=0\in{\mathbf\Phi}_r(V)$.
Hence, the composition $u\circ\ffi^{CH}_*\circ v$ is zero, which means exactly that $\ffi^{CH}_*$ acts as zero on
$\kker(Q_r)/\op{Im}(Q_r)$.
 \Qed
\end{proof}

\begin{cor}
 \label{torsion-M-Margolis}
 If $M$ is a torsion motive such that $\ov{J}(M_{\zz_{(p)}})\supset I(n)$, then the $r$-th Margolis cohomology of $M$ are trivial, for all $0\leq r<n$.
\end{cor}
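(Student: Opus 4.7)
The plan is to reduce Corollary \ref{torsion-M-Margolis} directly to Proposition \ref{corresp-Margolis} applied to the projector of $M$ itself. Write $M_{\zz_{(p)}}=(X,\rho)$. Since $I(n)=(p,v_1,\ldots,v_{n-1})\subset\ov{J}(M_{\zz_{(p)}})$, for each $0\leq r<n$ the element $v_r$ lies in $\ov{J}(M_{\zz_{(p)}})$. By the very definition of $\ov{J}$, this means that $v_r\cdot\rho^{BP}=0$ in $GrBP(X^{\times 2})$, where $\rho^{BP}$ is the $BP$-projector corresponding to $\rho$ under the lifting $Chow(k,\zz_{(p)})\to Chow^{BP}(k)$. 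As $v_r$ is by construction a $\nu_r$-element of $BP$, the hypotheses of Proposition \ref{corresp-Margolis} are met for the correspondence $\ffi^{BP}:=\rho^{BP}:X\rightsquigarrow X$ with $z=v_r$.

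First I would apply Proposition \ref{corresp-Margolis} to conclude that the Chow trace $\rho^{CH}_*$ acts as zero on the $r$-th Margolis cohomology $\kker(Q_r)/\op{Im}(Q_r)$ of $X$. Next I would translate this into a statement about $M$: since $M$ is cut out of $X$ by the idempotent correspondence $\rho$, its motivic cohomology with $\ff_p$-coefficients is the image of the projector $\rho_*$ on $\hm^{*,*}(X,\ff_p)$. By Corollary \ref{milnor-corresp}, $Q_r$ commutes with $\rho_*$, so $\rho_*$ restricts to an idempotent on $\kker(Q_r)$, descends to an idempotent on the $r$-th Margolis cohomology of $X$, and identifies the $r$-th Margolis cohomology of $M$ with its image. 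Since this restriction is zero by the previous step, the $r$-th Margolis cohomology of $M$ vanishes.

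This argument works uniformly for every $0\leq r<n$, so no inductive or multi-$r$ bookkeeping is required. The only potentially delicate point is making sure that the identification of $\hm^{*,*}(M,\ff_p)$ as a direct summand of $\hm^{*,*}(X,\ff_p)$ is compatible with the Margolis differential used in Proposition \ref{corresp-Margolis}; but this compatibility is precisely the content of Corollary \ref{milnor-corresp}, together with the fact that on $\zz_{(p)}$-coefficients the Chow trace of $\rho^{BP}$ is indeed $\rho$. Hence the only real input is the correspondence-level vanishing $v_r\cdot\rho^{BP}=0$ in $GrBP$, which is handed to us by the containment $I(n)\subset\ov{J}(M_{\zz_{(p)}})$.
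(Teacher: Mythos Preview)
Your proposal is correct and follows exactly the approach of the paper, which simply says ``Apply Proposition \ref{corresp-Margolis} to $\ffi=\rho_M$.'' Your version just unpacks the one-line proof: you make explicit that $v_r\in I(n)\subset\ov{J}(M_{\zz_{(p)}})$ yields $v_r\cdot\rho^{BP}=0$ in $GrBP$, and you spell out (via Corollary \ref{milnor-corresp}) why the vanishing of $\rho_*$ on the Margolis cohomology of $X$ forces the Margolis cohomology of the direct summand $M$ to be zero.
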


\begin{proof}
Apply Proposition \ref{corresp-Margolis} to $\ffi=\rho_M$.
 \Qed
\end{proof}

\begin{rem}
 \label{ss-Kr-k}
 More generally, one can show that if $\ov{J}(M_{\zz_{(p)}})$ contains some $\alpha\equiv v_{n-1}^k(mod\, I(n-1))$, then the spectral sequence $\hm^{*,*'}(M,\zz/p)\Rightarrow\op{K}(r)(M)$
 (with the target being zero by Proposition \ref{In-Morava})
 should degenerate after $k$ differentials, for all $0\leq r<n$.
 \Red
\end{rem}

\subsection{Consequences for torsion motives}

In this Subsection, our ground field is any field of characteristic zero.
It appears that motivic cohomology of torsion motives are absent in some regions depending on the level.

\begin{prop}
 \label{mot-coh-abs}
 Let $M=(X,\rho)$ be a torsion motive such that $\ov{J}(M_{\zz_{(p)}})$
 contains $\alpha\equiv v_{n-1}(mod\,I(n-1))$. Then
 $\hm^{a,b}(M_F,\zz/p)=0$, for $a< n$, as well as for $b< n$, over any $F/k$.
\end{prop}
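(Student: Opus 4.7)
The plan is to combine Corollary \ref{torsion-M-Margolis} (which gives $Q_0,\ldots,Q_{n-1}$ as exact differentials on $\hm^{*,*}(M_F,\ff_p)$) with Proposition \ref{CH-small} (Chow group vanishing in low codimension) and a direct degree-of-correspondence argument at the boundary.

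I first establish the coordinate-axis vanishings $\hm^{0,b}(M_F,\ff_p)=0$ and $\hm^{a,0}(M_F,\ff_p)=0$ for all $a,b$. Since $\CH^0(M_F)=0$ by Proposition \ref{CH-small} (valid for $n\geq 1$), the second degree $d_\rho:=\deg(\op{pr}_{2*}\rho)\in\zz$ of the projector must vanish (being an idempotent integer, it is $0$ or $1$, and the latter would force $\CH^0(M_F)=\zz$). The projection formula then shows $\rho$ acts as multiplication by $d_\rho=0$ on the constant cohomology $\hm^{0,b}(X_F,\ff_p)=\mu_p^{\otimes b}(F)$, while $\hm^{a,0}(X_F,\ff_p)=0$ already for $a\neq 0$ by constant-sheaf vanishing on an irreducible scheme, and $\hm^{0,0}(M_F)=d_\rho\cdot\ff_p=0$.

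For $b<n$, I proceed by induction on $b$; the base $b\leq 0$ is handled above. In the inductive step $0<b<n$, the Chow row $\hm^{2b,b}(M_F,\ff_p)=\CH^b(M_F,\ff_p)=0$ by Proposition \ref{CH-small}, since $b<n\leq\frac{p^n-1}{p-1}$. For $a<2b$, I apply Margolis exactness of $Q_0$: the already-established $\hm^{0,b}(M_F)=0$ makes $Q_0$ injective on $\hm^{1,b}$, while $Q_0$ vanishes into $\hm^{2b,b}=0$ at the top, giving an exact sequence $0\to\hm^{1,b}\to\cdots\to\hm^{2b-1,b}\to 0$. To turn this into term-by-term vanishing, I invoke a higher $Q_r$ with $1\leq r<n$ chosen so that $p^r>b+1$; then the preimage weight $b-p^r+1<0$ makes $Q_r$ injective on $\hm^{a,b}(M_F)$, and chaining with $Q_0$ brings each class into a target controlled either by the Chow-row vanishing at weight $b$ or by the outer weight-induction hypothesis. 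Small edge cases where $p^{n-1}\leq b+1$ (e.g.\ $(p,n)=(2,2)$, $b=1$) are handled directly using only $Q_0$, Chow, and the boundary vanishing.

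For $a<n$, a symmetric strategy applies, starting now from $\hm^{a,0}(M_F)=0$ and combining Margolis exactness with the just-established $b<n$ vanishing. Alternatively, Proposition \ref{p-level-tensor} guarantees that $M^\vee$ has the same $p$-level as $M$, so Poincar\'e duality $\hm^{a,b}(M_F)\cong\hm^{2d-a,d-b}(M^\vee_F)$ transforms the $a<n$ statement for $M$ into an analogous high-bidegree vanishing for $M^\vee$, which is proved by running the same machinery ``from the top'' (replacing Proposition \ref{CH-small} by its dual form $\CH^i(M_F)=0$ for $i>d-\frac{p^n-1}{p-1}$, obtained by applying it to $M^\vee$). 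The main technical obstacle is the uniform orchestration of the Margolis chases in the interior of the $(a,b)$-region: at every forbidden bi-degree one must select a Milnor operation (or short composition of them) that either lands in an automatically vanishing target or reduces to a bi-degree already covered by the induction, and the choice depends delicately on the position relative to the slope-$2$ Chow line and the zero axes.
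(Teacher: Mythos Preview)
Your proposal identifies the right ingredients (Margolis exactness from Corollary~\ref{torsion-M-Margolis}) but leaves the central step unfinished. You say explicitly that ``the main technical obstacle is the uniform orchestration of the Margolis chases'', and indeed your sketch never closes this gap: knowing that $Q_0$ is an exact differential on the finite row $\hm^{*,b}$ does not force termwise vanishing, and your attempt to fix this by applying a single higher $Q_r$ sends classes to weight $b+p^r-1>b$, where neither the induction hypothesis nor the Chow vanishing is available. The phrase ``chaining with $Q_0$ brings each class into a target controlled \ldots\ by the outer weight-induction hypothesis'' is not justified, since $Q_0$ does not change the weight.

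The paper's argument supplies exactly the missing idea and is much shorter. One reduces to $b<n$ (the case $a<n$ follows immediately from Beilinson--Lichtenbaum, since for $a<n\leq b$ one has $\hm^{a,b}\cong\hm^{a,a}$; no duality or separate argument is needed). Given a nonzero $u\in\hm^{a,b}(M_F,\zz/p)$ with $a\geq b$, one applies the \emph{full chain} $Q_{n-1}\circ\cdots\circ Q_0$ and shows inductively that every partial composite $Q_r\circ\cdots\circ Q_0(u)$ is nonzero: if it vanished, Margolis exactness would give $Q_{r-1}\circ\cdots\circ Q_0(u)=Q_r(v)$ with $v$ in weight $b+\sum_{i<r}(p^i-1)-(p^r-1)<b$, hence $v=0$ by the induction on $b$. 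The final nonzero element $Q_{n-1}\circ\cdots\circ Q_0(u)$ then lives in bidegree $(c,d)$ with $c-2d=(a-2b)+n>0$ (using $a\geq b$ and $b<n$), which is impossible for a Chow motive. This cumulative degree shift across all $n$ operations is what makes the argument work; your approach of applying isolated $Q_r$'s at a fixed weight cannot reproduce it. The invocation of Proposition~\ref{CH-small} is unnecessary.
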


\begin{proof}
 By the Beilinson-Lichtenbaum ``conjecture'' it is sufficient to prove the statement for $b< n$. Induction on $b$. For $b<0$ we know it. 
 Suppose, $u\in\hm^{a,b}(M,\zz/p)$ is a non-zero element. 
 Again, by the Beilinson-Lichtenbaum and the inductive assumption we can assume that $a\geq b$ and, in the case $a=b$, $u$ is not in the image of $Q_0$. 
 Now by induction on $0\leq r< n$ we show that
 $Q_r\circ\ldots\circ Q_0(u)$ is non-zero. Suppose not.
 Then by Corollary \ref{torsion-M-Margolis},
 $Q_{r-1}\circ\ldots\circ Q_0(u)=Q_r(v)$, for some $v$.
 But the ``round'' degree of $v$ is equal to
 $\displaystyle b+(p^0-1)+\ldots+(p^{r-1}-1)-(p^r-1)=
 b+\frac{p^r-1}{p-1}-r-(p^r-1)<b$. Thus, $v=0$ by the inductive assumption - a contradiction. Hence, $Q_{n-1}\circ\ldots\circ Q_0(u)\neq 0$.
 But this element resides in some group $\hm^{c,d}(M,\zz/p)$
 with $c>2d$ - a contradiction. Therefore, $u=0$. 
 \Qed
\end{proof}

Combining it with the Beilinson-Lichtenbaum ``conjecture'', we obtain:

\begin{cor}
 \label{et-vanish-a}
 Let $M=(X,\rho)$ be a torsion motive such that $\ov{J}(M_{\zz_{(p)}})$
 contains $\alpha\equiv v_{n-1}(mod\,I(n-1))$. Then
 $\op{H}^a_{et}(M_F,\zz/p)=0$, for any $a<n$ and any 
 $F/k$.
\end{cor}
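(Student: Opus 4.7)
The plan is to bootstrap directly from Proposition \ref{mot-coh-abs} via the Beilinson-Lichtenbaum theorem (Voevodsky-Rost), which asserts that for any smooth $Y$ over $F$ the natural comparison map
$$\hm^{a,b}(Y,\zz/p)\lrow \op{H}^a_{et}(Y,\mu_p^{\otimes b})$$
is an isomorphism whenever $a\leq b$. Since both sides are functorial with respect to correspondences, this isomorphism descends to any Chow motive, and in particular to our direct summand $M$.

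Given $a<n$, I would first choose a weight $b$ in the range $\max(a,0)\leq b<n$ (for $a<0$ the statement is vacuous since étale cohomology vanishes in negative degrees). By Proposition \ref{mot-coh-abs} applied at this bi-degree, $\hm^{a,b}(M_F,\zz/p)=0$, and the Beilinson-Lichtenbaum isomorphism then converts this into $\op{H}^a_{et}(M_F,\mu_p^{\otimes b})=0$.

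The remaining step is to eliminate the Tate twist. If $F$ contains a primitive $p$-th root of unity the étale sheaves $\mu_p^{\otimes b}$ and $\zz/p$ are already isomorphic and we are done. In general I would pass to $F'=F(\mu_p)$: since $[F':F]$ divides $p-1$ and is therefore coprime to $p$, the standard restriction/corestriction composition exhibits $\op{H}^a_{et}(M_F,\zz/p)$ as a direct summand of $\op{H}^a_{et}(M_{F'},\zz/p)\cong\op{H}^a_{et}(M_{F'},\mu_p^{\otimes b})$, which vanishes by the previous step.

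The only point requiring care is that Beilinson-Lichtenbaum is usually stated for smooth varieties, whereas $M$ is merely a direct summand of the motive of a smooth projective $X$ cut out by the projector $\rho$; but both sides of the comparison map are functorial with respect to correspondences, so the isomorphism transports along $\rho$ to $M$ without difficulty. Apart from this bookkeeping, no essential obstacle arises — all the real work has already been done in Proposition \ref{mot-coh-abs}.
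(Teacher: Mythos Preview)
Your argument is correct and follows essentially the same route as the paper: Proposition \ref{mot-coh-abs} plus Beilinson--Lichtenbaum. The paper's own proof is literally the one-line remark ``Combining it with the Beilinson--Lichtenbaum `conjecture', we obtain'', so you have reproduced the intended argument.

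Your additional care with the Tate twist (passing to $F(\mu_p)$ and using restriction--corestriction) is a genuine clarification that the paper's terse statement glosses over. Note, however, that you do not actually need the constraint $b<n$ when choosing your weight: since Proposition \ref{mot-coh-abs} gives vanishing for \emph{all} $b$ once $a<n$, you may simply take $b=a$ (for $a\geq 0$) and apply Beilinson--Lichtenbaum on the diagonal. Also, in the paper's applications (Proposition \ref{sm-Met-kbar} onward) only the case $F=\kbar$ is used, where $\mu_p\cong\zz/p$ and the twist issue evaporates --- which may explain why the paper does not dwell on it.
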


\begin{rem}
 \label{ab-k-less-p}
 In view of the Remark \ref{ss-Kr-k}, similar arguments imply 
 that the result still holds if $\ov{J}(M_{\zz_{(p)}})$
 contains some $\alpha\equiv v_{n-1}^k(mod\, I(n-1))$, for
 $k<p$.
 \Red
\end{rem}

In the case of an algebraically closed field, we obtain more.

\begin{prop}
 \label{sm-Met-kbar}
 Let $M\!=\!(X,\rho)$ be a torsion motive such that $\ov{J}(M_{\zz_{(p)}})$
 contains $\alpha\!\equiv\! v_{n-1}(mod\,I(n\!-\!1))$. Then
 $\op{H}^a_{et}(M_{\kbar},\zz/p)=0$, for any $a<n$ and 
 for $a> 2d-n$, where $d=\ddim(X)$. In other words, the singular cohomology (with $\zz/p$-coefficients)
 $H^a$ of the topological realization of $M$ are absent in this range.
\end{prop}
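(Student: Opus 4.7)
The plan is to reduce both vanishing ranges to Corollary \ref{et-vanish-a}, the first directly and the second by Poincar\'e duality. For $a<n$ the statement is just Corollary \ref{et-vanish-a} specialised to $F=\kbar$, so no new work is required for this half.

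For $a>2d-n$, the idea is to dualise. By Proposition \ref{p-level-tensor}, the dual motive $M^{\vee}$ satisfies $\ov{J}(M^{\vee}_{\zz_{(p)}})=\ov{J}(M_{\zz_{(p)}})$, and hence contains the same element $\alpha\equiv v_{n-1}\,(mod\,I(n-1))$. So Corollary \ref{et-vanish-a} applied to $M^{\vee}$ yields $\op{H}^b_{et}(M^{\vee}_{\kbar},\zz/p)=0$ for every $b<n$. Since $M^{\vee}=(X,\rho^t)(d)[2d]$ with $X$ smooth projective of dimension $d$, and the transposed projector $\rho^t$ is the Poincar\'e-adjoint of $\rho$, classical Poincar\'e duality over $\kbar$ (where $char(\kbar)=0$ and $\mu_p\cong\zz/p$ noncanonically, so all Tate twists are trivial as \'etale sheaves) produces a perfect $\zz/p$-bilinear pairing
$$
\op{H}^a_{et}(M_{\kbar},\zz/p)\times\op{H}^{2d-a}_{et}(M^{\vee}_{\kbar},\zz/p)\lrow\zz/p.
$$
Vanishing of the right-hand factor for $2d-a<n$, i.e.\ $a>2d-n$, then forces the vanishing of the left.

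The concluding assertion about singular cohomology is immediate from Artin's comparison theorem: for smooth projective varieties over $\cc$, $\zz/p$-\'etale cohomology agrees with singular cohomology of the complex-analytic realisation, compatibly with the action of correspondences, and so the two vanishing ranges transfer directly to the topological realisation of $M$.

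The only nontrivial ingredient beyond the results established earlier in the paper is the compatibility of the Poincar\'e pairing with the presentation of $M^{\vee}$ as $(X,\rho^t)(d)[2d]$; this identification of $\op{H}^a_{et}(M_{\kbar},\zz/p)$ with the $\zz/p$-linear dual of $\op{H}^{2d-a}_{et}(M^{\vee}_{\kbar},\zz/p)$ is standard, but it is the single place where one appeals to an input not packaged in Corollary \ref{et-vanish-a} or Proposition \ref{p-level-tensor}.
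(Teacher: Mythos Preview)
Your argument is correct and follows the paper's own proof: vanishing for $a<n$ is Corollary~\ref{et-vanish-a}, and for $a>2d-n$ one applies the same corollary to $M^{\vee}$ (which has the same $\ov{J}$) and then invokes Poincar\'e duality over $\kbar$. One minor slip: $M^{\vee}=\Homi(M,\zz(d)[2d])=(X,\rho^t)$, not $(X,\rho^t)(d)[2d]$ --- but the pairing you wrote down is already the correct one for $M^{\vee}=(X,\rho^t)$, so the argument stands.
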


\begin{proof}
 Let $M^{\vee}=\Homi(M,\zz(d)[2d])$ be the dual direct summand. Then $\ov{J}(M^{\vee})=\ov{J}(M)$ and, by Corollary \ref{et-vanish-a}, $\op{H}^a_{et}(M_{\kbar},\zz/p)=0$, for $a<n$, and the same is true for $M^{\vee}$.
 Since $\op{H}^a_{et}(M_{\kbar},\zz/p)=\left(\op{H}^{2d-a}_{et}(M^{\vee}_{\kbar},\zz/p)\right)^{\!\!*}$, we get that
 $\op{H}^a_{et}(M_{\kbar},\zz/p)=0$, for $a> 2d-n$.
 \Qed
\end{proof}

The above Proposition shows that a torsion motive of a high $p$-level can't reside just ``under the surface'', but should be ``sufficiently deep'' inside the motive of the respective variety.\\

For a number $x\in\rr$, denote as $\lceil x\rceil$ the smallest integer $\geq x$. 

\begin{prop}
 \label{prav-dim-et-vanish}
 Let $M=(X,\rho)\in Ob(Chow(k;\zz_{(p)}))$ be a torsion motive of $p$-level $n$ with\\
 $\displaystyle\ddim(X)<\lceil\frac{n}{2}\rceil+\frac{p^{n}-1}{p-1}$.
 Then $M_{et}|_{\kbar}=0$.
\end{prop}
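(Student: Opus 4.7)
The plan is to combine Proposition \ref{sm-Met-kbar}, which handles the extremes, with a minimality argument using Milnor operations to eliminate the middle etale range $[n, 2d-n]$.

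First, I verify the hypothesis falls within the scope of Proposition \ref{JM-small-dim}: for $n \geq 2$ the elementary inequality $\lceil n/2 \rceil \leq p^{n-1} - 1$ holds (the case $n \leq 1$ being vacuous by Proposition \ref{no-in-curves}), so $d < \lceil n/2 \rceil + K$ (with $K = (p^n-1)/(p-1)$) implies $d \leq (2p-1)(p^{n-1}-1)/(p-1)$. Thus $\ov{J}(M_{\zz_{(p)}}) = I(n)$, and Proposition \ref{sm-Met-kbar} yields $H^a_{et}(M_{\kbar},\zz/p) = 0$ for $a < n$ and $a > 2d - n$. It remains to kill the middle range.

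Suppose for contradiction that $H^{a^*}_{et}(M_{\kbar},\zz/p) \neq 0$ for some minimal $a^*$; then $a^* \geq n$. Pick a nonzero class $u$ there, and, via Beilinson--Lichtenbaum, view it as $u \in H^{a^*, a^*+K}_{mot}(M_{\kbar},\zz/p)$. The composition $\Theta = Q_{n-1} \circ Q_{n-2} \circ \cdots \circ Q_0$ shifts bidegree by $(2K-n, K-n)$, so $\Theta u$ lands on the Beilinson--Lichtenbaum boundary in bidegree $(a^*+2K-n, a^*+2K-n)$, corresponding to an etale class of degree $a^*+2K-n$. The hypothesis (using integrality of $d$) gives $2d < n + 2K$, hence $a^* + 2K - n \geq 2K > 2d - n$, and by Proposition \ref{sm-Met-kbar} we obtain $\Theta u = 0$.

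Now descend via Margolis triviality (Corollary \ref{torsion-M-Margolis}), which states $\op{Im}(Q_r) = \op{Ker}(Q_r)$ on motivic cohomology of $M$ for $0 \leq r < n$. For $r$ descending from $n-1$ down to $0$, each vanishing $Q_r(Q_{r-1} \cdots Q_0 u) = 0$ (the base case being $\Theta u = 0$) yields a preimage $Q_{r-1} \cdots Q_0 u = Q_r w_r$. A direct bidegree computation places $w_r$ at round degree $a^* + 2(K_r - p^r) - r + 1$ and weight $a^* + K + (K_r - p^r) - r + 1$, where $K_r = (p^r-1)/(p-1)$; since $p^r - K_r = ((p-2)p^r + 1)/(p-1) \geq 1$ for all $p \geq 2$, $r \geq 0$, the round degree is at most $a^* - r - 1 < a^*$, and the weight is large enough to keep $w_r$ in Beilinson--Lichtenbaum range. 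Hence $w_r$ represents an etale class of degree strictly less than $a^*$, forcing $w_r = 0$ by minimality. So $Q_{r-1} \cdots Q_0 u = 0$; iterating to $r = 0$ yields $u = Q_0 w_0 = 0$, contradicting $u \neq 0$. Therefore $H^a_{et}(M_{\kbar},\zz/p) = 0$ for all $a$, i.e. $M_{et}|_{\kbar} = 0$.

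The main challenge is the bidegree bookkeeping through the chain of Margolis lifts: one must ensure each witness $w_r$ stays in the Beilinson--Lichtenbaum range so that its vanishing on etale cohomology is meaningful, and that its round degree is strictly less than $a^*$ so minimality applies. The precise bound $d < \lceil n/2 \rceil + K$ is exactly what is needed to push the initial composition $\Theta u$ past the upper vanishing threshold $2d - n$; everything else is driven by the formulaic inequality $p^r \geq K_r + 1$.
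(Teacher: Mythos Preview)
Your proof is correct and follows the same strategy as the paper: use Proposition \ref{JM-small-dim} to get $I(n)\subset\ov{J}(M)$, invoke Proposition \ref{sm-Met-kbar} to confine the possibly nonzero $\HH^a_{et}(M_{\kbar},\zz/p)$ to the window $n\leq a\leq 2d-n$, then eliminate that window via Margolis exactness (Corollary \ref{torsion-M-Margolis}). The only difference is packaging: the paper observes that exactness of each $Q_r$ makes $\HH^*_{et}(M_{\kbar},\zz/p)$ a free $\Lambda_{\ff_p}(Q_0,\ldots,Q_{n-1})$-module whose ``height'' $2K-n$ exceeds the window length $2d-2n$, forcing it to vanish; you instead unfold this into an explicit minimal-degree descent, lifting a hypothetical lowest class $u$ to bidegree $(a^*,a^*+K)$, pushing it past $2d-n$ with $Q_{n-1}\cdots Q_0$, and then peeling off one $Q_r$ at a time via Margolis preimages $w_r$ of strictly smaller round degree --- this is exactly the hands-on proof that a bounded module with all $Q_r$ exact is free, specialized to the situation at hand.
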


\begin{proof}
For $n=1$ we know it from Proposition \ref{no-in-curves}.
So, assume that $n> 1$.
Since $\displaystyle\lceil\frac{n}{2}\rceil+\frac{p^{n}-1}{p-1}-1\leq\frac{(2p-1)(p^{n-1}-1)}{p-1}$, for $n> 1$, from Proposition \ref{JM-small-dim} we obtain that $I(n)\subset\ov{J}(M)$. Then Proposition \ref{sm-Met-kbar} implies that
$\op{H}^a_{et}(M_{\kbar},\zz/p)$ can be non-zero only for
$n\leq a\leq 2d-n$, where $d=\ddim(X)$.

It follows from Corollary \ref{torsion-M-Margolis} and the Beilinson-Lichtenbaum ``conjecture'' that $Q_r$
is an exact differential on $\op{H}^*_{et}(M_{\kbar},\zz/p)$, for any
$0\leq r<n$. This means that $\op{H}^*_{et}(M_{\kbar},\zz/p)$ is a free module over $\Lambda_{\zz/p}(Q_0,\ldots,Q_{n-1})$. But the ``height'' of this algebra is the
``square'' degree of $Q_{n-1}\circ\ldots\circ Q_0$, which is
$\displaystyle(2p^0-1)+\ldots+(2p^{n-1}-1)=2\frac{p^{n}-1}{p-1}-n>2d-2n$. Hence, $\op{H}^*_{et}(M_{\kbar},\zz/p)=0$. Since the category of etale motives with $\zz/p$-coefficients over $\kbar$ is the derived category of $\ff_p$-vector spaces, we obtain: $M_{et}|_{\kbar}(mod\, p)=0$, and so,
$M_{et}|_{\kbar}=0$, as $M$ is killed by some power of $p$.
 \Qed
\end{proof}

Corollary \ref{torsion-M-Margolis} permits to structurize the motivic cohomology of $M$ (provided $\ov{J}(M_{\zz_{(p)}})$ contains a $\nu_{n-1}$ element). Let us denote as $\Lambda$ the ring
$\Lambda_{\ff_p}(Q_0,\ldots,Q_{n-1})$.

\begin{prop}
 \label{mot-coh-u-v}
 Suppose $I(n)\subset\ov{J}(M_{\zz_{(p)}})$. Then 
 $\hm^{*,*'}(M,\zz/p)$ has a filtration $F^{(a,b)}$
 by $\Lambda[\tau]$-modules, where
 $(a,b)$ runs through all pairs $2b\geq a\geq b\geq 0$ 
 ordered lexicographically, $F^{(a,b)}/F^{<(a,b)}$ is a 
 direct sum of modules of the type
 $\Lambda[\tau]\cdot u_{\alpha}$
 and 
 $\Lambda[\tau]/(\tau^{l_{\beta}})\cdot v_{\beta}$, where $\ddeg(u_{\alpha})=\ddeg(v_{\beta})=(b)[a]$, $0\leq l_{\beta}\leq a-b-1$, and
 the quotient module $G=\hm^{*,*'}(M,\zz/p)/F^{(a,b)}$ satisfies: $G^{c,d}=0$, for $c<a$ and for $c=a$, $d\leq b$.
\end{prop}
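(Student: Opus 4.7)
The plan is to combine the $\Lambda$-freeness of $\hm^{*,*'}(M,\zz/p)$ (a consequence of Margolis-exactness from Corollary \ref{torsion-M-Margolis}) with an inductive construction of the filtration tracking $\tau$-multiplication.

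First, I would establish that $\hm^{*,*'}(M,\zz/p)$ is a free module over $\Lambda=\Lambda_{\zz/p}(Q_0,\ldots,Q_{n-1})$. By Corollary \ref{torsion-M-Margolis}, each of the anti-commuting differentials $Q_0,\ldots,Q_{n-1}$ has vanishing Margolis cohomology on $\hm^{*,*'}(M,\zz/p)$. A standard inductive argument on $r$ (successively peeling off free $\Lambda(Q_r)$-summands using contracting homotopies built from the Margolis-exactness of each $Q_r$) then yields that $\hm^{*,*'}(M,\zz/p)$ is free over $\Lambda$.

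Next, I construct the filtration $\{F^{(a,b)}\}$ by lexicographic induction on $(a,b)$ with $2b\geq a\geq b\geq 0$, starting from $F^{<(0,0)}=0$. At step $(a,b)$, assume $F^{<(a,b)}$ is a $\Lambda[\tau]$-submodule with quotient $G:=\hm^{*,*'}(M,\zz/p)/F^{<(a,b)}$ vanishing in all bi-degrees strictly less than $(a,b)$. The key observation is that $Q_r$ strictly raises the cohomological degree $a$ while $\tau$ raises only the weight $b$, so for any $\zz/p$-subspace $V\subset G^{a,b}$, the $\Lambda[\tau]$-submodule $\Lambda[\tau]\cdot V\subset G$ meets bi-degree $(a,b)$ precisely in $V$. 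Setting $V:=G^{a,b}$ and defining $F^{(a,b)}$ to be the preimage of $\Lambda[\tau]\cdot V$ in $\hm^{*,*'}(M,\zz/p)$ preserves the inductive hypothesis.

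To decompose $F^{(a,b)}/F^{<(a,b)}\cong\Lambda[\tau]\cdot V$ into the prescribed summands, observe that the $\Lambda$-freeness of $\hm^{*,*'}(M,\zz/p)$ transfers (via choosing $V$ as lifts of $\Lambda$-free generators) to freeness of the $\Lambda$-action on $\Lambda[\tau]\cdot V$; since $\Lambda$ strictly raises $a$, the $\Lambda[\tau]$-module structure is then governed purely by the $\tau$-action on $V$. Filter $V$ by $\tau$-nilpotence order: $V^{(l)}:=\{v\in V:\tau^l v=0\text{ in }\Lambda[\tau]\cdot V\}$, yielding a chain $V^{(1)}\subset V^{(2)}\subset\cdots\subset V^{(\infty)}\subset V$. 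Choose complementary subspaces $C_l\subset V^{(l)}$ with $V^{(l)}=V^{(l-1)}\oplus C_l$, and $C_\infty\subset V$ with $V=V^{(\infty)}\oplus C_\infty$. Each basis vector of $C_l$ generates a cyclic summand $\Lambda[\tau]/(\tau^l)\cdot v_\beta$, and each basis vector of $C_\infty$ generates a free summand $\Lambda[\tau]\cdot u_\alpha$. The bound $l_\beta\leq a-b-1$ follows from Beilinson-Lichtenbaum applied to the direct summand $M$: in the range $c\geq a$, both $\hm^{a,c-1}(M,\zz/p)$ and $\hm^{a,c}(M,\zz/p)$ identify with étale cohomology groups and $\tau$-multiplication is an isomorphism; moreover, the transition map $\hm^{a,a-1}(M,\zz/p)\to\hm^{a,a}(M,\zz/p)$ is also injective (as a consequence of Bloch-Kato for $M$ as a summand of a smooth projective variety). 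Hence a relation $\tau^{l}v_\beta=0$ with $l\geq a-b$ would force $\tau^{l-1}v_\beta=0$, contradicting minimality of $l$.

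The main obstacle will be the coherence of the decomposition across steps: one must verify that the $\Lambda$-free-generator structure of $\hm^{*,*'}(M,\zz/p)$ can be lifted to each bi-degree stratum $V$ compatibly with the $\tau$-nilpotence filtration, and that the non-commutation between $Q_0$ and $\tau$ (arising from the class $Q_0(\tau)\in k^*/(k^*)^p$) does not disrupt the $\Lambda[\tau]$-submodule structure — this may require working modulo a controlled error term coming from $Q_0(\tau)$, or restricting the interaction to the appropriate bi-degrees where such corrections already lie in $F^{<(a,b)}$.
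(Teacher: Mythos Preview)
Your approach is essentially the paper's: both use Margolis-exactness (Corollary~\ref{torsion-M-Margolis}) to obtain $\Lambda$-freeness, build the filtration by lexicographic induction on $(a,b)$, stratify each graded piece by $\tau$-nilpotence order, and invoke Beilinson--Lichtenbaum for the bound $l_\beta\le a-b-1$. The one organisational difference is that the paper does not set up global $\Lambda$-freeness in advance; instead, at each step it observes that the quotient $G=\hm^{*,*'}(M,\zz/p)/F^{<(a,b)}$ again has vanishing Margolis homology (since $F^{<(a,b)}$ is $\Lambda$-free by induction), and then uses the vanishing of $G$ in cohomological degrees $<a$ to see that $Q_{n-1}\circ\cdots\circ Q_0$ is injective on $G^{a,*}$. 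This is cleaner than your phrase ``choosing $V$ as lifts of $\Lambda$-free generators,'' which as written does not explain why such lifts exhaust $G^{a,b}$; the paper's quotient argument delivers that automatically.

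Your closing paragraph correctly flags a subtlety the paper passes over: the Bockstein $Q_0$ need not commute with multiplication by $\tau$, since $Q_0(\tau)=[\zeta_p]\in k^*/(k^*)^p$. Over fields in which $\zeta_p$ is a $p$-th power (in particular, over $\kbar$ and its extensions, which is where the proposition is actually applied in Proposition~\ref{plyus-odin}) this term vanishes and $\Lambda[\tau]$ is the naive object; over a general field of characteristic zero one should either read $\Lambda[\tau]$ as the subalgebra of the motivic Steenrod algebra with the twisted relation, or verify, as you suggest, that the $Q_0(\tau)$-correction terms already lie in $F^{<(a,b)}$. The paper does not comment on this, so your caution is well placed, though it does not affect any downstream use of the result.
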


\begin{proof} We can take $F^{<(0.0)}=0$. Suppose, we already constructed $F^{<(a,b)}$. By inductive assumption, we have an
exact sequence:
$$
0\row F^{<(a,b)}\row\hm^{*,*'}(M,\zz/p)\stackrel{g}{\row} G
\row 0,
$$
where $G^{c,d}=0$, for $c<a$ and for $c=a$, $d<b$. Then $Q_r$, $0\leq r\leq n-1$ act as exact differentials on $G$ as well.
By the standard arguments, using the fact that $G$ is trivial below the level $a$, we
obtain that $Q_{n-1}\circ\ldots\circ Q_0$ is injective on 
$G^{a,*}$. 
We have a filtration $G^{a,b}_l=\kker(\cdot\tau^l)$ on $G^{a,b}$. Let $G^{a,b}_{\infty}=\cup_l G^{a,b}_l$.
By the Beilinson-Lichtenbaum ``conjecture'', $G^{a,b}_{\infty}=G^{a,b}_{a-b-1}$. Choose an $\ff_p$-vector
subspace $H_l$ of $G^{a,b}_l$ that $G^{a,b}_l=G^{a,b}_{l-1}\oplus H_l$ and a subspace $H_{\infty}$ of $G^{a,b}$ such 
that $G^{a,b}=G^{a,b}_{\infty}\oplus H_{\infty}$. Then
$$
H=\left(\operatornamewithlimits{\oplus}_{0\leq l\leq a-b-1}\Lambda[\tau]/(\tau^l)\otimes_{\ff_p}H_l\right)\oplus\left(\Lambda[\tau]\otimes_{\ff_p}H_{\infty}\right)
$$
is a $\Lambda$-submodule of $G$ such that $H^{a,b}=G^{a,b}$.
It remains to take $F^{(a,b)}=g^{-1}(H)$. 
Induction step is proven. 
 \Qed
\end{proof}

This permits to improve a bit the bound of Theorem \ref{level-size}, at least, for algebraically closed fields.

\begin{prop}
 \label{plyus-odin}
 Let $M=(X,\rho)\in Ob(Chow(k;\zz_{(p)}))$ be a torsion motive of $p$-level $n$ with\\
 $\displaystyle\ddim(X)=\frac{p^{n}-1}{p-1}+i$, where
 $\displaystyle i<\lceil\frac{n}{2}\rceil$ and $i\leq 1$. Then $M_{\kbar}=0$.
\end{prop}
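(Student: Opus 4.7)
The plan is to sharpen Proposition~\ref{prav-dim-et-vanish} by combining its conclusion with the motivic cohomology filtration of Proposition~\ref{mot-coh-u-v}. Under the additional hypothesis $i\leq 1$ I will show that the entire $\hm^{*,*'}(M_{\kbar},\zz/p)$ vanishes (not merely its $\tau$-inverted part), and then deduce $M_{\kbar}=0$.

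First, since $i<\lceil n/2\rceil$, Proposition~\ref{prav-dim-et-vanish} applies and yields $M_{et}|_{\kbar}=0$. The inequality $d=\tfrac{p^n-1}{p-1}+i\leq\tfrac{(2p-1)(p^{n-1}-1)}{p-1}$ is easily verified in every case allowed by $i\leq 1<\lceil n/2\rceil$, so Proposition~\ref{JM-small-dim} gives $\ov{J}(M_{\zz_{(p)}})=I(n)$, and Proposition~\ref{mot-coh-u-v} equips $\hm^{*,*'}(M_{\kbar},\zz/p)$ with a filtration whose successive quotients are of the form $\Lambda[\tau]\cdot u_\alpha$ and $\Lambda[\tau]/(\tau^{l_\beta})\cdot v_\beta$. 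The vanishing $M_{et}|_{\kbar}=0$ is precisely the statement that this cohomology is zero after inverting $\tau$; since the $v_\beta$-pieces are $\tau$-torsion while the $u_\alpha$-pieces become free $\Lambda[\tau^{\pm 1}]$-modules, every $u_\alpha$ must vanish. Only sub-quotients $\Lambda[\tau]/(\tau^{l_\beta})\cdot v_\beta$ with $l_\beta\geq 1$, and hence with $a\geq b+2$, can survive.

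For any such remaining $v_\beta$ in bidegree $(b)[a]$, I would apply $Q_{n-1}\circ\cdots\circ Q_0$: by the freeness of the $\Lambda$-action on the sub-quotient this produces a non-zero class in $\hm^{*,*'}(M_{\kbar},\zz/p)$ of bidegree $\bigl(a+2\tfrac{p^n-1}{p-1}-n,\;b+\tfrac{p^n-1}{p-1}-n\bigr)$. The weight bound for classes on $M_{\kbar}\subset X_{\kbar}$ of dimension $d$ forces $b+\tfrac{p^n-1}{p-1}-n\leq d$, i.e.\ $b\leq n+i$, while the slope bound (square-degree at most twice the weight) gives $a\leq 2b-n$. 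Combined with $a\geq b+2$ this yields $b\geq n+2$, contradicting $b\leq n+i\leq n+1$ under $i\leq 1$. Hence no $v_\beta$ survives and $\hm^{*,*'}(M_{\kbar},\zz/p)=0$.

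The hard part is then extracting $M_{\kbar}=0$ from this vanishing. The same analysis applied over every algebraically closed extension $\Omega\supset\kbar$ (the invariants $n,d,\ov{J}$ being preserved under base change) produces $\CH^{*}(M_\Omega,\zz/p)=0$, and the $p^N$-torsion of $M$ upgrades this to $\CH^{*}(M_\Omega,\zz_{(p)})=0$ for every such $\Omega$. A standard generic-point and localization argument then promotes the vanishing of the correspondence's action on all Chow groups over all extensions to the vanishing of the idempotent projector $\rho_{\kbar}$ itself. This final implication---turning triviality of the induced action into triviality of the cycle $\rho$ defining the projector---is the main obstacle, and where the argument is most delicate.
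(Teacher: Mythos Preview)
Your approach has the right overall architecture, but there is a genuine gap in the elimination of the $v_\beta$-pieces and in the passage to $M_{\kbar}=0$.

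The ``weight bound'' $b'\leq d$ you invoke is not a general fact about motivic cohomology of a direct summand of a $d$-dimensional variety: already for $X=\pp^1$ one has $\hm^{2,b'}(\pp^1_{\kbar},\zz/p)\neq 0$ for all $b'\geq 1$. It \emph{is} true in your situation, but only because you are over an algebraically closed field (so residue fields of $X$ have bounded cohomological dimension, forcing $\hm^{a,b}(M_{\kbar},k_M)=0$ for $b>d$) \emph{and} because $M_{et}|_{\kbar}=0$ (so the $\tau$-isomorphisms on $\hm^{a,b}$ for $b\geq d$ propagate the vanishing from the \'etale range). You need to supply this argument; as written, the bound is unjustified.

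More seriously, this weight bound genuinely fails over non-algebraically-closed extensions $F/\kbar$, since residue fields of $X_F$ can have arbitrarily large cohomological dimension. Hence your argument only yields $\CH^*(M_\Omega,\zz_{(p)})=0$ for algebraically closed $\Omega$, and this does \emph{not} suffice for $M_{\kbar}=0$: Manin's identity principle requires the vanishing of $\CH^*(M_F)$ over all finitely generated $F/\kbar$ (e.g.\ $F=\kbar(X)$), and the restriction $\CH^*(M_F)\to\CH^*(M_{\overline{F}})$ need not be injective on $p$-torsion. So the ``most delicate'' step you flag at the end is in fact blocked by the earlier choice of bound.

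The paper sidesteps both problems by using the \emph{diagonal} bound $a'-b'\leq d$, which holds over every field. Since $Q_{n-1}\circ\cdots\circ Q_0$ raises the diagonal by $\frac{p^n-1}{p-1}$, all generators $u_\alpha,v_\beta$ must lie on diagonals $\leq i\leq 1$; but Beilinson--Lichtenbaum makes $\tau$ injective on diagonals $\leq 1$, so no $\tau$-torsion pieces exist. This argument runs unchanged over every finitely generated $F/\kbar$, giving $\tau$-injectivity on $\hm^{*,*'}(M_F,\zz/p)$; combined with $M_{et}|_F=0$ (inherited from $\kbar$) one gets $\hm^{*,*'}(M_F,\zz/p)=0$, hence $\CH^*(M_F)=0$ for all such $F$, and then $M_{\kbar}=0$ follows directly.
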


\begin{proof}
 By Proposition \ref{JM-small-dim}, $\ov{J}(M)=I(n)$. Hence, Proposition \ref{mot-coh-u-v} applies and 
 $\hm^{*,*'}(M_F,\zz/p)$ is an extension of modules of the type
 $\Lambda[\tau]\cdot u_{\alpha}$ and 
 $\Lambda[\tau]/(\tau^{l_{\beta}})\cdot v_{\beta}$, where
 $\ddeg(v_{\beta})=(b)[a]$ and $l_{\beta}\leq a-b-1$.
 Since $\displaystyle\ddim(X)\leq\frac{p^{n}-1}{p-1}+1$
 and $Q_r$ has the ``diagonal degree'' $p^r$, the generators
 $v_{\beta}$ (as well as $u_{\alpha}$) of our modules reside on the diagonals with numbers $\leq 1$. But multiplication by $\tau$ is injective on such diagonals by
 the Beilinson-Lichtenbaum ``conjecture''. Hence, the modules
 of the second kind ($\tau$-torsion ones) are absent, and so,
 multiplication by $\tau$ is injective on $\hm^{*,*'}(M_F,\zz/p)$, for any extension $F/k$. Consider now some finitely 
 generated extension $F/\kbar$. From Proposition 
 \ref{prav-dim-et-vanish}, $M_{et}|_{\kbar}=0$, which implies
 that $M_{et}|_F=0$ $\Rightarrow$ $\op{H}^*_{et}(M_F,\zz/p)=0$.
 Since multiplication by $\tau$ is injective, by the Beilinson-Lichtenbaum ``conjecture'' we obtain that
 $\hm^{*,*'}(M_F,\zz/p)=0$ for all finitely generated extensions $F/\kbar$. As $M$ is killed by some power of $p$,
 the same is true about integral motivic cohomology. In particular, all Chow groups of $M$ are zero, for any such $F$. Then $M_{\kbar}=0$.
 \Qed
\end{proof}

\begin{cor}
 \label{sharper-m-tri}
 Let $k=\kbar$ and $M=(X,\rho)$ be a torsion motive of 
 $p$-level $n\leq 4$. Then 
 $$
 \displaystyle\ddim(X)\geq 
 \lceil\frac{n}{2}\rceil+\frac{p^{n}-1}{p-1}.
 $$
\end{cor}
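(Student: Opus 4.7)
The plan is to deduce this as an immediate consequence of Proposition \ref{plyus-odin}, arguing by contradiction. Suppose $\ddim(X) < \lceil\frac{n}{2}\rceil + \frac{p^n-1}{p-1}$. By Theorem \ref{level-size}, we already know $\ddim(X) \geq \frac{p^n-1}{p-1}$, so we may write $\ddim(X) = \frac{p^n-1}{p-1} + i$ for some integer $i$ with $0 \leq i < \lceil\frac{n}{2}\rceil$.

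The key observation is that the hypothesis $n \leq 4$ is exactly what matches the two conditions $i < \lceil\frac{n}{2}\rceil$ and $i \leq 1$ appearing in Proposition \ref{plyus-odin}. Indeed, for $n \leq 4$ we have $\lceil\frac{n}{2}\rceil \leq 2$, and therefore the inequality $i < \lceil\frac{n}{2}\rceil$ forces $i \leq 1$ automatically. Hence both hypotheses of Proposition \ref{plyus-odin} are satisfied, and we conclude $M_{\kbar} = 0$. Since our ground field is assumed algebraically closed, this means $M = 0$, which contradicts the assumption that $M$ has $p$-level exactly $n$ (indeed, $p$-level $n < \infty$ implies $M_{\zz_{(p)}} \neq 0$ by Definition \ref{level}).

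There is no real obstacle beyond checking this numerical compatibility: Proposition \ref{plyus-odin} does all the substantive work (it in turn combined the small-dimension identification $\ov{J}(M) = I(n)$ from Proposition \ref{JM-small-dim}, the $\Lambda[\tau]$-module structure of Proposition \ref{mot-coh-u-v}, and the \'etale vanishing of Proposition \ref{prav-dim-et-vanish}). The restriction $n \leq 4$ in the corollary is therefore a direct reflection of the $i \leq 1$ constraint in Proposition \ref{plyus-odin}; removing it would require strengthening Proposition \ref{plyus-odin} to allow larger values of $i$, which is exactly the content of the conjecture mentioned in the introduction.
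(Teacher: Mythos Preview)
Your proof is correct and matches the paper's intended argument: the corollary is stated immediately after Proposition \ref{plyus-odin} without proof, and your deduction via the numerical compatibility ($n\leq 4\Rightarrow \lceil n/2\rceil\leq 2\Rightarrow i\leq 1$) is exactly the implicit reasoning.
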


In the light of expected properties of Chow motives (in
particular, {\it Rost nilpotence conjecture}), it is natural
to propose:

\begin{conj}
 \label{gip-razmer} Let $M=(X,\rho)$ be a
  tosion motive of $p$-level $n$. Then 
  $\displaystyle\ddim(X)\geq 
 \lceil\frac{n}{2}\rceil+\frac{p^{n}-1}{p-1}$.
\end{conj}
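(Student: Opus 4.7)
My plan is to extend the strategy of Proposition \ref{plyus-odin} (which settles $i \leq 1$, where $i := \ddim(X) - \frac{p^n-1}{p-1}$) to the full range $i < \lceil n/2 \rceil$. I work over $\kbar$, proceeding by contradiction: assume $M = (X, \rho)$ is a non-zero torsion motive of $p$-level $n$ with $\ddim(X) < \lceil n/2 \rceil + \frac{p^n-1}{p-1}$. The case $n = 1$ is Proposition \ref{no-in-curves}, so assume $n \geq 2$. Then the elementary estimate $\lceil n/2 \rceil + 1 \leq p^{n-1}$ gives $\ddim(X) \leq \frac{(2p-1)(p^{n-1}-1)}{p-1}$, so Proposition \ref{JM-small-dim} yields $\ov{J}(M_{\zz_{(p)}}) = I(n)$. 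Proposition \ref{prav-dim-et-vanish} then produces $M_{et}|_{\kbar} = 0$, hence $\op{H}^*_{et}(M_F, \zz/p) = 0$ for every extension $F/\kbar$.

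Next I would apply Proposition \ref{mot-coh-u-v} to $\hm^{*,*'}(M_F, \zz/p)$, obtaining an iterated extension of $\Lambda[\tau]$-modules generated by free generators $u_\alpha$ and torsion generators $v_\beta$ of order $l_\beta \leq a_\beta - b_\beta - 1$. Since $Q_{n-1} \circ \cdots \circ Q_0$ is injective on each generator and shifts the diagonal $a - b$ by $\frac{p^n - 1}{p - 1}$, staying inside the cohomology of $M_F$ forces every generator to lie on a diagonal $\leq i$, so $l_\beta \leq i - 1 < \lceil n/2 \rceil - 1$. The vanishing of etale cohomology, combined with the Beilinson--Lichtenbaum identification of $\hm^{a,b}$ with $\op{H}^a_{et}$ for $b \geq a$, rules out all free generators: any $u_\alpha$ lifted to a class in $\hm^{a,b}(M_F, \zz/p)$ yields a nonzero $u_\alpha \cdot \tau^{a-b}$ in the stable range, which under Beilinson--Lichtenbaum would be a nonzero etale class.

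The main obstacle is the remaining step: eliminating the $\tau$-torsion generators $v_\beta$ of order $1 \leq l_\beta \leq i - 1$, which are invisible to etale cohomology. Two complementary strategies suggest themselves. The first is to extract additional constraints from the Atiyah--Hirzebruch-type spectral sequence $\hm^{*,*'}(M, \zz/p) \Rightarrow K(n)(M)$ of Remark \ref{ss-Kr-k}: since all generators sit in diagonals $\leq i$ while $Q_n$ shifts diagonal by $p^n$ (which exceeds $\frac{p^n-1}{p-1}$ for every $p \geq 2$), applying $Q_n$ to any generator pushes it past $\ddim(X)$ and kills it, so the configuration of higher differentials is rigidly constrained; one then aims to show the spectral sequence collapses to zero, contradicting $K(n)(M) \neq 0$ from Proposition \ref{In-Morava}. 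The second strategy is Poincar\'e duality: $M^{\vee}$ has the same $\ov{J}$ and admits a parallel presentation, but a $\tau$-torsion generator of $M$ at position $(b)[a]$ corresponds under the duality $\hm^{a,b}(M) \cong \hm^{2d-a,d-b}(M^{\vee})^{*}$ to a class on the opposite diagonal $d - (a-b) > d - i$ of $M^{\vee}$, which should lie outside the allowed range of diagonals $\leq i$ for $M^{\vee}$'s generators. Making either strategy precise — in particular, carefully handling the interaction of torsion generators with duality across the filtration of Proposition \ref{mot-coh-u-v} — is the essential technical difficulty, which is presumably why the statement appears as a conjecture rather than a theorem.
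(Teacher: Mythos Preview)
The statement is labelled a \emph{Conjecture} in the paper and carries no proof; the paper establishes only the special case $n\leq 4$ over an algebraically closed field (Corollary~\ref{sharper-m-tri}), which is exactly the range where $\lceil n/2\rceil\leq 2$ so that Proposition~\ref{plyus-odin} (the case $i\leq 1$) already suffices. There is therefore no ``paper's own proof'' to compare your attempt against.

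Your proposal is not a proof but a strategy outline, and you correctly acknowledge this in your final sentence. The reduction steps you carry out---passing to $\kbar$, invoking Propositions~\ref{JM-small-dim} and~\ref{prav-dim-et-vanish} to get $\ov{J}(M)=I(n)$ and $M_{et}|_{\kbar}=0$, then applying the structure result Proposition~\ref{mot-coh-u-v}---exactly mirror the paper's argument in the proven cases, and your elimination of the $\tau$-free generators $u_\alpha$ via Beilinson--Lichtenbaum is essentially the mechanism behind Proposition~\ref{plyus-odin}. The obstacle you isolate, namely the $\tau$-torsion generators $v_\beta$ of order between $2$ and $i-1$, is precisely the gap separating the known case $i\leq 1$ from the general conjecture.

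Two cautions on your sketched strategies. First, the Poincar\'e duality you invoke, $\hm^{a,b}(M)\cong\hm^{2d-a,d-b}(M^{\vee})^{*}$, holds for \'etale cohomology over $\kbar$ but not for motivic cohomology; the correct motivic duality pairs $\hm^{a,b}(M)$ with motivic \emph{homology} of $M$, and $\tau$-torsion classes are invisible to the \'etale pairing, so this route needs a genuinely different formulation. Second, your reduction to $\kbar$ already presupposes that $M_{\kbar}\neq 0$, which is not automatic; the paper explicitly flags (in the sentence preceding the Conjecture) that the full statement over arbitrary $k$ is motivated by the Rost nilpotence conjecture, so even a complete argument over $\kbar$ would leave this descent step conditional.
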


\section{Torsion motives of surfaces}
\label{section-four}

\subsection{Godeaux torsion motive}

The aim of this subsection is to have a closer look at the torsion direct summand in the motive of the Godeaux surface.
Our ground field will be the field of complex numbers $\cc$. 

Let $X$ be a {\it Godeaux surface}, that is, the quotient of the Fermat quintic $x_1^5+x_2^5+x_3^5+x_4^5=0$ in $\pp^3$ by the $\zz/5$-action given by $\sigma(x_1,x_2,x_3,x_4)=(\xi x_1,\xi^2 x_2,\xi^3 x_3,\xi^4 x_4)$, for a generator $\sigma$ of $\zz/5$, where $\xi$ is a primitive root of $1$ of degree $5$.

It is a smooth projective surface whose singular cohomology
$\HH^*(X,\zz)=\HH^*_{sing}(X(\cc),\zz)$ are given by:
$\HH^i(X,\zz)=\zz$, for $i=0,4$; $\HH^1(X,\zz)=0$, 
$\HH^3(X,\zz)=\zz/5$; $\HH^2(X,\zz)=
\zz^9\oplus\zz/5$ - \cite[Section 2]{BGS}.

It was shown by Gorchinskiy-Orlov \cite[Proposition 2.2]{GO} that the torsion part of the cohomology corresponds 
to a direct summand of $M(X)$. It is not difficult to
see that the respective projector can be made self-dual.

\begin{prop}
 \label{M-dual}
 There exists a self dual with respect to $\Homi(-,\zz(2)[4])$ direct summand $M$ of $M(X)$ such that
 $\HH^*(M,\zz)=\HH^*(X,\zz)_{tors}$.
\end{prop}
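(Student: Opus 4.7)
The plan is to take $M$ to be the Gorchinskiy--Orlov direct summand $N := (X, \pi)$ itself, and to prove that this $N$ is already self-dual, i.e.\ $N \cong N^\vee(2)[4] = (X, \pi^t)$ in $Chow(\cc)$, where $\pi^t$ denotes the transpose correspondence. First I would compute $\HH^*(N^\vee(2)[4], \zz)$: since $\pi$ acts trivially on $\HH^*(X, \qq)$ (its image is pure torsion), the same holds for $\pi^t$, so $N^\vee(2)[4]$ has only torsion singular cohomology. By the universal coefficient formula applied to the derived dual, $\HH^i(N^\vee, \zz) \cong \op{Ext}^1(\HH^{i+1}(N, \zz), \zz)$ when the source is torsion; substituting $\HH^2(N) = \HH^3(N) = \zz/5$ and using $\op{Ext}^1(\zz/5, \zz) \cong \zz/5$, and accounting for the $(2)[4]$ shift, one finds $\HH^*(N^\vee(2)[4], \zz) \cong \HH^*(X, \zz)_{\op{tors}}$. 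This matches (in the abstract) the linking-form duality $\HH^2_{\op{tors}}(X) \cong \HH^3_{\op{tors}}(X)$ on the $4$-manifold $X(\cc)$. Since the only torsion direct summand of $\HH^*(X, \zz)$ abstractly isomorphic to $\HH^*(X, \zz)_{\op{tors}}$ is the full torsion subgroup itself, the projector $\pi^t_*$ is the identity on $\HH^*(X, \zz)_{\op{tors}} = \op{Im}(\pi_*)$, a point I shall need below.

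Next I would construct an explicit isomorphism by setting $f := \pi^t \circ \pi \in \Hom_{Chow}(N, N^\vee(2)[4])$ and $g := \pi \circ \pi^t \in \Hom_{Chow}(N^\vee(2)[4], N)$; the required factorization conditions $\pi^t \circ f \circ \pi = f$ and $\pi \circ g \circ \pi^t = g$ in $\CH^2(X \times X)$ are immediate from $\pi^2 = \pi$ and $(\pi^t)^2 = \pi^t$. The composites $g \circ f = \pi \pi^t \pi$ and $f \circ g = \pi^t \pi \pi^t$ act as identities on the respective singular cohomologies by the observation above ($\pi^t_*$ is the identity on $\op{Im}(\pi_*) = \HH^*(X, \zz)_{\op{tors}}$).

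The main obstacle, and the key step of the plan, is to upgrade these cohomological identities to Chow-level equalities $\pi \pi^t \pi = \pi$ and $\pi^t \pi \pi^t = \pi^t$ in $\CH^2(X \times X)$. For this I would exploit the smallness of $\End_{Chow(\cc)}(N)$: since $N$ is killed by $5$ and has cohomology concentrated in only two adjacent degrees, over the algebraically closed field $\cc$ the \'etale realization functor, combined with the Beilinson--Lichtenbaum conjecture, injects $\End_{Chow(\cc)}(N)$ into $\End(\HH^*_{\op{sing}}(N, \ff_5))$; the cohomological identities then descend to Chow-level ones. Alternatively, one may bypass this step by producing a self-dual projector directly via a symmetrization of the GO construction, which is built from torsion elements of $\op{Pic}(X)$ admitting the natural involution $L \mapsto L^{-1}$. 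Either route yields the required self-dual direct summand $M$ of $M(X)$.
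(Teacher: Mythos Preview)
Your approach differs substantially from the paper's and has a real gap at the key step.

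The paper does not attempt to show that the Gorchinskiy--Orlov summand is self-dual. Instead it constructs a symmetric projector onto the \emph{complement} directly: using a $\zz$-basis $e_1,\dots,e_9$ of the torsion-free part of $\op{Pic}(X)$ whose intersection matrix $A$ is integrally invertible (such a basis exists by \cite[Corollary~4.15]{BGS}), the cycle $\rho_1=\sum_{i,j}(A^{-1})_{ij}\,e_i\times e_j$ is a symmetric projector splitting off $(\zz(1)[2])^{\oplus 9}$, and together with $\rho_2=[X\times pt]+[pt\times X]$ one obtains a symmetric $\rho=\rho_1+\rho_2$ projecting onto the torsion-free part of $\HH^*(X,\zz)$. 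Then $M=(X,1-\rho)$ is self-dual on the nose. No realization functors are needed.

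In your argument the injectivity step is unjustified and, in the form you use it, false. Beilinson--Lichtenbaum gives $\hm^{a,b}(-,\zz/p)\hookrightarrow\HH^a_{et}$ only for $a\leq b+1$, whereas $\End(N)\subset\CH^2(X\times X)=\hm^{4,2}$ lies outside this range. More decisively, your cohomological identity is established on $\HH^*(N,\zz)$, not on $\HH^*(N,\ff_5)$, and these are genuinely different targets: the paper later computes (Example~\ref{End-Godeaux}) that $\End(M)\cong\ff_5[\epsilon]/(\epsilon^2)$, and the nilpotent $\epsilon$ necessarily acts as zero on each $\HH^i(M,\zz)\cong\zz/5$. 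So the realization $\End(N)\to\End(\HH^*_{sing}(N,\zz))$ is \emph{not} injective, and ``$(\pi\pi^t\pi)_*=\pi_*$ on integral cohomology'' does not lift to $\pi\pi^t\pi=\pi$ in $\CH^2(X\times X)$.

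Your approach can be salvaged with a weaker input: the kernel of $\End(N)\to\End(\HH^*_{sing}(N,\zz))$ consists of nilpotents (essentially the conservativity of \'etale realization for motives of surfaces, cf.\ Gille~\cite{Gi14}, used later in Theorem~\ref{K-S}). Then $g\circ f=\op{id}_N+(\text{nilpotent})$ is a unit, giving $N\cong N^{\vee}(2)[4]$. This yields self-duality up to isomorphism rather than a literally symmetric projector --- adequate for the sequel, but less direct than the paper's construction. Your proposed alternative via ``symmetrization of the GO construction'' is too vague to evaluate; the paper's method \emph{is} such a direct symmetrization, but of the complementary projector built from the N\'eron--Severi lattice rather than from the exceptional collection.
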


\begin{proof}   
By \cite[Corollary 4.15]{BGS} there is a $\zz$-basis
of the free part of the Picard group $N(X)=\zz^9$ that
the respective intersection matrix $A$ is an invertible
(integral valued symmetric) matrix. Let $e_i,\,i=1,...,9$ be such a basis, and let $B$ be some
symmetric integral valued $9\times 9$ matrix. We can assign to
it the cycle $\Phi_{B}=\sum_{i,j}b_{i,j}\cdot e_i\times e_j\in\CH^2(X\times X)$. If $f=\sum_k\lambda_ke_k\in N(X)$, then $(\Phi_{B})_*(f)=B\cdot A\cdot f$. Thus, if
we take $B=A^{-1}$, then $(\Phi_{B})_*$ will act identically on $N(X)$. By the same reason, 
$\rho_1=\Phi_{A^{-1}}$
is a projector. Since $A^{-1}$ is a symmetric matrix, 
this projector is symmetric. It splits $(\zz(1)[2])^{\oplus 9}$ from the motive of $X$. In addition,
we have a projector $\rho_2=[X\times pt]+[pt\times X]$
orthogonal to $\rho_1$, and so, the symmetric projector $\rho=\rho_1+\rho_2$ splits the free part from the cohomology of $X$. That is,
$M(X)=\rho M(X)\oplus (1-\rho) M(X)$, where 
$\rho M(X)=\zz\oplus(\zz(1)[2])^{\oplus 9}\oplus
\zz(2)[4]$ and $\HH^*((1-\rho)M(X),\zz)=\HH^*(X,\zz)_{tors}$. Let us denote $(1-\rho)M(X)$ as $M$.
Since $(1-\rho)$
is symmetric, we have a natural identification
$\Homi(M,\zz(2)[4])=M$.
\phantom{a}\hspace{5mm}
\Qed
\end{proof}

Because $\HH^i(M,\zz)=\zz/5$, for
$i=2,3$ and zero for all other $i$, we obtain that
$\HH^1(M,\zz/5)={\mathbf u}\cdot\zz/5$, $\HH^2(M,\zz/5)=Q_0({\mathbf u})\cdot\zz/5\oplus {\mathbf v}\cdot\zz/5$, $\HH^3(M,\zz/5)=Q_0({\mathbf v})\cdot\zz/5$, where $Q_0$ is the Bockstein, with all other cohomology groups trivial. 

By the Artin comparison theorem 
\cite[Exp.XI, Theorem 4.4]{SGA4}, $\HH^*_{et}(X,\zz/5)=
\HH^*(X(\cc),\zz/5)$. Hence, $\HH^*_{et}(M,\zz/5)=\HH^*(M,\zz/5)$. 

\begin{prop}
 \label{Het-Xk}
 Suppose, $N$ is a Chow motive over $\cc$ and $k/\cc$ is
 some field extension and $l$ is prime. Then
 $$
 \HH^*_{et}(N_k,\zz/l)=\HH^*(N(\cc),\zz/l)\otimes_{\zz/l}
 \HH^*_{et}(k,\zz/l).
 $$
\end{prop}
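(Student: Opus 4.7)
The plan is to reduce the statement to the case where $N = M(X)$ for a smooth projective variety $X/\cc$, and then to analyse the Leray spectral sequence for the projection $X_k\to\spec(k)$.

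Writing $N=(X,\rho)$, both sides of the claimed identity are functorial with respect to the action of the Chow correspondence $\rho$: the left-hand side tautologically, and the right-hand side because the Artin comparison and the pull-back along $X_k\to X$ are both compatible with correspondences. Thus it suffices to establish the formula with $M(X)$ in place of $N$, and then apply $\rho$ to both sides to get the statement for $N$.

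For $X$ smooth projective over $\cc$, I would consider the projection $\pi\colon X_k\to\spec(k)$ and its Leray spectral sequence
$$E_2^{p,q}=\HH^p_{et}(\spec(k),R^q\pi_*\zz/l)\Rightarrow\HH^{p+q}_{et}(X_k,\zz/l).$$
Proper base change identifies the stalk of $R^q\pi_*\zz/l$ at a geometric point $\ov{\eta}\to\spec(k)$ with $\HH^q_{et}(X_{\ov{\eta}},\zz/l)$. Since $X$ is already defined over the algebraically closed field $\cc$, invariance of \'etale cohomology of smooth proper varieties under extensions of algebraically closed base fields gives $\HH^q_{et}(X_{\ov{\eta}},\zz/l)=\HH^q_{et}(X,\zz/l)$, which by Artin comparison equals $\HH^q(X(\cc),\zz/l)$. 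Moreover, $\op{Gal}(\kbar/k)$ acts trivially on these stalks because everything descends to $\cc$. Hence $R^q\pi_*\zz/l$ is the constant sheaf with value $\HH^q(X(\cc),\zz/l)$, and
$$E_2^{p,q}=\HH^p_{et}(k,\zz/l)\otimes_{\zz/l}\HH^q(X(\cc),\zz/l).$$

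To see that the spectral sequence degenerates at $E_2$, I would note that classes on the $q=0$ axis come from $\pi^*$ and are permanent cycles, while classes on the $p=0$ axis admit a one-sided inverse of the edge map given by pull-back along $X_k\to X$ composed with Artin comparison, and so are also permanent cycles. Because Leray differentials are derivations with respect to cup product and the $E_2$-page is generated multiplicatively by its two axes, all differentials vanish. The resulting filtration on $\HH^n_{et}(X_k,\zz/l)$ consists of short exact sequences of $\zz/l$-vector spaces, which split, and the splitting is realized by the natural cup-product map
$$\HH^*(X(\cc),\zz/l)\otimes_{\zz/l}\HH^*_{et}(k,\zz/l)\lrow\HH^*_{et}(X_k,\zz/l),$$
giving the desired isomorphism. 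The main obstacle is establishing that $R^q\pi_*\zz/l$ is genuinely constant (rather than just locally constant): this requires both the base change statement and the triviality of the Galois action, which in turn both rely crucially on the fact that $X$ descends to $\cc$.
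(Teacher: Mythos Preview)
Your proposal is correct and follows essentially the same approach as the paper: reduce to $N=M(X)$, run the Leray/Hochschild--Serre spectral sequence for $X_k\to\spec(k)$, identify the $E_2$-page as the tensor product using base change and triviality of the Galois action, and argue degeneration. The only cosmetic difference is that the paper phrases degeneration via $\HH^*_{et}(k,\zz/l)$-linearity of the differentials together with survival of the $p=0$ basis classes, whereas you invoke the derivation property plus survival of both axes; these are two packagings of the same idea.
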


\begin{proof}
 Since $\HH^*(N(\cc),\zz/l)=\HH^*_{et}(N_{\cc},\zz/l)$, we have a natural map
 $$
 \HH^*(N(\cc),\zz/l)\otimes_{\zz/l}\HH^*_{et}(k,\zz/l)\row
 \HH^*_{et}(N_k,\zz/l)
 $$ 
 which we claim to be an isomorphism.
 Our motive $N$ is a direct summand in $M(Y)$ for some
 smooth projective $\cc$-variety $Y$. It is sufficient to prove the statement for $M(Y)$.
 
 We have the Hochschild-Serre spectral sequence:
 $$
 E_2=\HH^*_{et}(k,\HH^*_{et}(Y_{\ov{k}},\zz/l))\Rightarrow
 \HH^*_{et}(Y_k,\zz/l).
 $$
 The $E_2$-term of this sequence is a free 
 $\HH^*_{et}(k,\zz/l)$-module with the basis 
 $\HH^0_{et}(k,\HH^*_{et}(Y_{\kbar},\zz/l))=
 \HH^*_{et}(Y_{\kbar},\zz/l)=\HH^*(Y(\cc),\zz/l)$, since
 the natural map $\HH^*_{et}(Y_{\cc},\zz/l)\row
 \HH^*_{et}(Y_{\kbar},\zz/l)$ is an isomorphism by the 
 smooth base change theorem \cite[Chapter VI, Corollary 4.3]{Miln}, and so, the 
 $Gal(\kbar/k)$-action on the latter module is trivial.
 Differentials in this sequence are $\HH^*_{et}(k,\zz/l)$-linear, while the basis elements survive in the $E_{\infty}$-term by the same smooth base change result, since these come from 
 $\HH^*_{et}(Y_{\cc},\zz/l)$. Hence, the sequence degenerates from the $E_2$-term and so, our original map is an isomorphism.
 \Qed
\end{proof}

We have the Brown-Gersten-Quillen type spectral sequence
$$
E_1^{p,q}(coniveau)=\oplus_{x\in X^{(p)}}\HH^q_{et}(k(x),\zz/l)\Rightarrow
\HH^{q+2p}_{et}(X_k,\zz/l),
$$
inducing the coniveau filtration on $\HH^*_{et}(X_k,\zz/l)$. Staring with the $E_2$-page this spectral sequence can be identified with the $\tau$-spectral sequence (see, for example, \cite[Section 2]{TY}).
$$
E_1^{a,b}(tau)=\moco{a}{b}(X_k,\km)\Rightarrow \HH^{a}_{et}(X_k,\zz/l)
$$
given by the exact pair
$$
\xymatrix{
& \oplus_{a,b}\moco{a}{b}(X_k,\km) \ar[dr]^{(-1)[1]} & \\
\oplus_{a,b}\moco{a}{b}(X_k,\zz/l) \ar[ur] & & 
\oplus_{a,b}\moco{a}{b}(X_k,\zz/l) 
\ar[ll]^{(1)}_{\cdot\tau}
}
$$
coming from the distinguished triangle
$\zz/l(-1)\stackrel{\tau}{\row}\zz/l\row\km\row\zz/l(-1)[1]$ (where $\tau$ corresponds to some choice of
the primitive $l$-th root of $1$ in $\cc\subset k$).
Note that $k_M$ belongs to the heart of the homotopic $t$-structure \cite{Deg} and is represented there by the Milnor's K-theory mod $p$ Rost cycle module $k^*_M=K^*_M/p$ (so, the name).
We have: $E_2^{p,q}(coniveau)=E_1^{2p+q,p+q}(tau)$, by the results of Deligne and Parajape - see \cite[Theorem 2.4]{TY}.
On the level of associated filtrations on etale cohomology, this is reflected by the fact
that elements of 
$\HH^n_{et}(X_k,\zz/l)=\moco{n}{n}(X_k,\zz/l)$ 
supported in codimension $\geq c$ are exactly those
which are divisible by $\tau^c$ in $\moco{*}{*'}$.

The differential $d_r$ of the $\tau$-sequence acts
as follows: $d_r:E_r^{a,b}\row E_r^{a+1,b-r}$ and,
for a $d$-dimensional variety, $E_1^{a,b}$ is non-zero
only for $a-2b\leq 0\leq a-b\leq d$. In particular, the sequence
degenerates at $E_d$. 
The diagonal part $E_1^{a,a}$ is nothing else, but
the {\it unramified cohomology}
$\HH^a_{nr}(k(X)/k,\zz/l)$ of the generic point of $X$.

If the ground field $k=\kbar$ is algebraically closed, then $\HH^q_{et}(\kbar(x),\zz/l)=0$,
for points of co-dimension $p>d-q$. Hence, the $E(tau)$
spectral sequence is concentrated, in this case, in the
region $a-2b\leq 0\leq a-b$; $b\leq d$. The $E_{\infty}$
page of it provides us with some sort of ``Hodge half-diamond'' describing the codimension of support of
elements in $\HH^*(X(\cc),\zz/l)$.

$$
\def\objectstyle{\scriptstyle}
\def\labelstyle{\scriptstyle}
\xymatrix @-1.5pc{
& & & & & & & & & & & & \\
& & & & & & & & & & & & \\
& & & & & \bub & & & & & & & \\
& & & & & \bub & & & & & & & \\
& & & &  \bub & \bub & & \ar@{}[r]^{E^{a,b}_{\infty}(\tau)} & & & & & \\
& & & & \bub & \bub & & & & & & & \\
& & & \bub & \bub & \bub & & & & & & & \\
& & & \bub & \bub & & & & & & & & \\
& & \bub & \bub & & & & & & & & & \\
& & \bub & & & & & & & & & & \\
\dub \ar@{->}[rrrrrrrrrrr]_(0.9){(b)} & \bub & & & & \wub
\ar@{}[r]_{d} 
& & & & & & & \\
& \dub \ar@{->}[uuuuuuuuuuu]^(0.9){[a]} & & & & & & & & & & & \\
}
$$

Note that the $\tau$-spectral sequence makes sense for
arbitrary motives. In particular, for Chow motives. 
If a Chow motive $M$ is a direct summand of a motive of a surface, then this sequence degenerates in the $E_2$-term and if, moreover, $k=\kbar$ is algebraically closed, then it degenerates already in the $E_1$-term.
Since singular cohomology of $M$ are finite-dimensional
vector spaces, this implies that the same holds about
$\km$-motivic cohomology, in this case.

Returning to our Godeaux motive $M$, we conclude (taking into account the action of Bockstein) that the Hodge half-diamond for $M_{\cc}$ looks as:
$$
\def\objectstyle{\scriptstyle}
\def\labelstyle{\scriptstyle}
\xymatrix @-1.5pc{
& & & & & & & & \\
& & & \bub \ar@{}[r]^{Q_0(\tilde{v})}& & & & & \\
& & \bub \ar@{}[u]^{Q_0(\tilde{u})}& 
\bub \ar@{}[r]^{\tilde{v}} & & & & & \\
& & \bub \ar@{}[d]^{\tilde{u}} & & & & & & \\
\dub \ar@{->}[rrrrr]_(0.9){(b)} & & & & & & & & \\
& \dub \ar@{->}[uuuuu]^(0.9){[a]} & & & & & & & \\
}
$$
where each $\bullet$ corresponds to a 1-dimensional
$\zz/5$-space spanned by the respective element and 
elements $\tilde{u}$ and $\tilde{v}$ project to ${\mathbf u}$ and
${\mathbf v}\in \HH^*(M(\cc),\zz/5)$.
Taking into account that Tate-motives contribute only 
to the slope$=2$ part $a=2b$ of the half-diamond, 
we see
that the diagonal part $a=b$ of the $E_1$-term for the
Godeaux surface $X$ itself is $1\cdot\zz/5\oplus \tilde{u}\cdot\zz/5\oplus \tilde{v}\cdot\zz/5$. This is exactly the unramified
cohomology $\HH^*_{nr}(\cc(X)/\cc,\zz/5)$ of the generic
point of $X$ over $\cc$. So, $\tilde{u}$ is represented by some
unramified element of $K^M_1(\cc(X))/5$
and $\tilde{v}$ by some unramified element of $K^M_2(\cc(X))/5$. Since $\km$-motivic cohomology fits into the exact sequence:
$$
0\row\op{Coker}(\moco{a}{b-1}\stackrel{\tau}{\row}
\moco{a}{b})\row\moco{a}{b}(-,\km)\row
\kker(\moco{a+1}{b-1}\stackrel{\tau}{\row}\moco{a+1}{b})\row 0,
$$
where $\moco{*}{*'}=\moco{*}{*'}(-,\zz/5)$, and 
$\moco{2}{0}(X)=\moco{3}{1}(X)=0$, we see that
$\tilde{u}$ and $\tilde{v}$ lift to elements $u\in\moco{1}{1}(M_{\cc},\zz/5)$ and
$v\in\moco{2}{2}(M_{\cc},\zz/5)$ which are not divisible by $\tau$ and project to ${\mathbf u}$ and ${\mathbf v}$ in $\HH^*(M(\cc),\zz/5)$. 

From the description of the Hodge half-diamond for $M$
and the fact that Tate-motives don't have $\moco{3}{*}(-,\zz/5)$
over $\cc$, we see that the product $u\cdot v$ in
$\moco{*}{*'}(X_{\cc},\zz/5)$ is equal to $m\cdot\tau Q_0(v)$,
for some $m\in\zz/5$. In particular, $Q_0(u\cdot v)=0$.
Since $M(\cc)$ is self-dual with respect to $\Homi(-,\zz/5[4])$ in
$D^b(\zz/5)$, we can assume w.l.o.g. that
${\mathbf u}\cdot Q_0({\mathbf v})={\mathfrak e}\in \HH^4(X,\zz/5)$ the dual of the class of a point. In other words, the non-degenerate pairing
on $\HH^*(M,\zz/5)$ satisfies $\la {\mathbf u},Q_0({\mathbf v})\ra=1$. Then, since 
$Q_0({\mathbf u}\cdot{\mathbf v})=0$, we have 
$\la Q_0({\mathbf u}),{\mathbf v}\ra=1$.
Because multiplication by $\tau$ is injective
on $\moco{4}{*}(X_{\cc},\zz/5)$ (note that $M_{\cc}$
has no motivic cohomology on this row), we obtain that
$u\cdot Q_0(v)=\eps\cdot\tau$ and $Q_0(u)\cdot v=\eps\cdot\tau$
for the class of a $\cc$-point $\eps\in\moco{4}{2}(X_{\cc},\zz/5)$.

It follows from the proof of \cite[Proposition 2.3]{GO} that the identity morphism of $M$ is killed by multiplication by $5$. That is, $M$ is a torsion motive of exponent $5$. This is a consequence of the Beilinson-Lichtenbaum conjecture and the computation of singular cohomology of $M$.

Let now $k/\cc$ be some field extension.
Denote as $\mocod{i}$ the $i$-th diagonal
$\moco{i+*}{*}$ in motivic cohomology.
Since integral motivic cohomology of $M$
is killed by $5$, Bockstein $Q_0$ acts without cohomology on $\moco{*}{*'}(M_k,\zz/5)$. By the 
Beilinson-Lichtenbaum conjecture and Proposition 
\ref{Het-Xk}, we have:
$$
\mocod{0}(M_k,\zz/5)=u\cdot \HH^*_{et}\oplus\, 
v\cdot \HH^*_{et}\oplus\, \tau Q_0(u)\cdot \HH^*_{et}\oplus\,
\tau Q_0(v)\cdot \HH^*_{et}, 
$$
where $\HH^*_{et}=\HH^*_{et}(k,\zz/5)$. Multiplication by 
$\tau$ is injective on $\mocod{1}$ and so,
$$
\mocod{1}(M_k,\zz/5)=\tau^{-1}\cdot(u,v)\cdot A\oplus\, Q_0(u)\cdot \HH^*_{et}\oplus\,
Q_0(v)\cdot \HH^*_{et}, 
$$
where $A=\kker(\HH^*_{et}\oplus \HH^*_{et}
\stackrel{(\tilde{u},\tilde{v})}{\lrow} \HH^*_{et}(k(X)))$ (recall that $\moco{n}{n}(M_k,\km)$ is a direct
summand in $\HH^n_{nr}(k(X)/k,\zz/5)$, so a linear combination of $u$ and $v$ is divisible by $\tau$
if and only if the respective combination of $\tilde{u}$ and $\tilde{v}$ vanishes in unramified cohomology). 
The image of $Q_0:\mocod{0}(M_k,\zz/5)\row\mocod{1}(M_k,\zz/5)$ is 
$Q_0(u)\cdot \HH^*_{et}\oplus\, Q_0(v)\cdot \HH^*_{et}$,
and as $Q_0$ acts as an exact differential, it is injective on 
$\tau^{-1}\cdot(u,v)\cdot A$. On the other hand, the
map $Q_0:\mocod{1}(M_k,\zz/5)\row\mocod{2}(M_k,\zz/5)$
is surjective, since $Q_0$ is trivial on the second diagonal ($X$ is 2-dimensional). Hence,
$$
\mocod{2}(M_k,\zz/5)=\tau^{-1}\cdot Q_0(u,v)\cdot A.
$$
In particular, we see that multiplication by $\tau$ is injective on the second, and so, all the diagonals. Hence, all the differentials $d_r$ are zero in the $\tau$-spectral sequence of $M$, which implies that our spectral sequence degenerates at the 1-st page.

Considered for all finitely generated field extensions
$k/\cc$, $A$ forms a cycle module of Rost \cite{RCM},
which is a submodule of $\HH^*_{et}\oplus \HH^*_{et}$
(with appropriate degree shift). Note that this module has only a zero section over $\cc$, since there are no relations among $\tilde{u},\tilde{v}\in \HH^*_{nr}(\cc(X)/\cc,\zz/5)$ as $\cc$ has cohomological 
dimension zero. But below we will see that over appropriate field
extensions such relations will appear.

We can express the group of zero cycles on $M$.

\begin{cor}
 \label{zero-cycl-Godeaux}
 The group $\CH_0(M_k)=\CH^2(M_k)$ can be identified with
 $$
 \kker(\HH^2_{et}(k)\oplus \HH^1_{et}(k)
\stackrel{(\tilde{u},\tilde{v})}{\lrow} \HH^3_{et}(k(X))).
$$
\end{cor}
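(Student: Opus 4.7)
The plan is to read off $\CH_0(M_k)=\CH^2(M_k)=\moco{4}{2}(M_k,\zz)$ directly from the formula $\mocod{2}(M_k,\zz/5)=\tau^{-1}\cdot Q_0(u,v)\cdot A$ just established. First I would pass from integral to mod-$5$ motivic cohomology using the Bockstein long exact sequence. Since $M$ is killed by $5$, multiplication by $5$ acts as zero on all motivic cohomology of $M$, so this sequence breaks into short exact sequences
\begin{equation*}
0\row\moco{4}{2}(M_k,\zz)\row\moco{4}{2}(M_k,\zz/5)\row\moco{5}{2}(M_k,\zz)\row 0.
\end{equation*}
To identify the leftmost term with the middle one, I would observe that the right-hand group vanishes: $\moco{5}{2}(M_k,\zz)$ injects into $\moco{5}{2}(M_k,\zz/5)$, which is a direct summand of $\moco{5}{2}(X_k,\zz/5)=0$ (the latter vanishes by dimensional reasons since $X$ is $2$-dimensional and $\zz/5(2)$ is a Nisnevich complex concentrated in degrees $\leq 2$). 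Equivalently, this position lies on the third diagonal $\mocod{3}(M_k,\zz/5)$, which is zero for any direct summand of a surface.

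Next, I would extract the $(4,2)$-component from the formula $\mocod{2}(M_k,\zz/5)=\tau^{-1}\cdot Q_0(u,v)\cdot A$. A general element of the second diagonal has the shape $\tau^{-1}Q_0(u)\cdot a_1+\tau^{-1}Q_0(v)\cdot a_2$ with $(a_1,a_2)\in A$. Since $Q_0(u)\in\moco{2}{1}$ and $\tau^{-1}$ shifts the weight down by one, the first summand sits in $\moco{4}{2}$ precisely when $a_1\in\moco{2}{2}(k,\zz/5)=\HH^2_{et}(k,\zz/5)$; similarly the second summand contributes to $\moco{4}{2}$ when $a_2\in\moco{1}{1}(k,\zz/5)=\HH^1_{et}(k,\zz/5)$, using the Beilinson--Lichtenbaum identification on the diagonal. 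The constraint $(a_1,a_2)\in A$ is by definition the vanishing $a_1\cdot\tilde u+a_2\cdot\tilde v=0$ in $\HH^3_{et}(k(X),\zz/5)$.

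Combining the two steps gives the desired identification
\begin{equation*}
\CH_0(M_k)=\moco{4}{2}(M_k,\zz/5)=\kker\Bigl(\HH^2_{et}(k)\oplus\HH^1_{et}(k)\xrightarrow{(\tilde u,\tilde v)}\HH^3_{et}(k(X))\Bigr).
\end{equation*}
There is no serious obstacle: everything reduces to bookkeeping of bidegrees and an appeal to the already-proven description of $\mocod{2}$. The only point worth care is the vanishing of the ``co-kernel'' $\moco{5}{2}(M_k,\zz)$, but, as noted, it is forced by the fact that $M$ is a summand of a surface and so all of its motivic cohomology is concentrated on diagonals $0$, $1$, $2$.
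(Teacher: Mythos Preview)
Your proposal is correct and is exactly the argument the paper has in mind: the corollary is stated without proof because it is meant to be read off directly from the formula $\mocod{2}(M_k,\zz/5)=\tau^{-1}\cdot Q_0(u,v)\cdot A$ established just above, together with the fact that $M$ is $5$-torsion (so integral and mod-$5$ motivic cohomology agree in the relevant bidegree) and that the third diagonal vanishes for a surface. You have simply made the implicit bookkeeping explicit.
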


To get hold of our cycle module $A$, we will first establish certain orthogonality relations among its sections.

\begin{prop}
 \label{orthog-A}
 Let $k/\cc$ be some field extension and 
 $(\lambda_a,\mu_a),(\lambda_b,\mu_b)$ be two sections
 of $A$ over $k$. Then
 \begin{equation}
  \label{ortog}
 \mu_b\lambda_a+(-1)^{\ddeg(\lambda_a)\ddeg(\lambda_b)}\mu_a\lambda_b=0\in \HH^*_{et}(k).
 \end{equation}
\end{prop}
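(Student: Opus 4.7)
The plan is to prove the orthogonality by lifting the two sections of $A$ to classes in the motivic cohomology of $M_k$ and exploiting the ring structure together with the explicit descriptions of the diagonals $\mocod{1}(M_k,\zz/5)$ and $\mocod{2}(M_k,\zz/5)$ derived above.

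First I would form, for each section $(\lambda,\mu)\in A$, its lift $\alpha=\tau^{-1}(u\lambda+v\mu)\in\mocod{1}(M_k,\zz/5)$. This is well defined precisely because $(\lambda,\mu)\in A$ forces $u\lambda+v\mu$ to be $\tau$-divisible in motivic cohomology. Then I would compute the cup product $\alpha_a\cdot\alpha_b\in\mocod{2}(M_k,\zz/5)$ using three structural facts: $u^2=0$ (graded commutativity at the odd prime $5$), $v^2=0$ (since the Hodge half-diamond of $M_{\cc}$ forces $\moco{4}{4}(M_{\cc},\zz/5)=0$), and $u\cdot v = m\tau Q_0(v)$ for some unit $m\in(\zz/5)^{\times}$ (coming from $\moco{3}{3}(M_{\cc},\zz/5) = \zz/5\cdot\tau Q_0(v)$). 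After expanding and tracking the graded-commutativity signs, the product takes the form $\alpha_a\cdot\alpha_b = m\tau^{-1}Q_0(v)\cdot N$, where $N$ is an $\HH^*_{et}(k)$-linear combination of $\lambda_a\mu_b$ and $\mu_a\lambda_b$, with no $Q_0(u)$ contribution.

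Next I would compare this with the description $\mocod{2}(M_k,\zz/5)=\tau^{-1}Q_0(u,v)\cdot A$. Because the $Q_0(u)$- and $Q_0(v)$-summands in the decomposition of $\mocod{1}(M_k,\zz/5)$ are independent, the map $A\hookrightarrow\mocod{2}$ sending $(\lambda',\mu')\mapsto\tau^{-1}(Q_0(u)\lambda'+Q_0(v)\mu')$ is injective. The vanishing of the $Q_0(u)$-coefficient of $\alpha_a\cdot\alpha_b$ therefore forces the corresponding $A$-section to have the form $(0,mN)$, and membership in $A$ amounts to $mN\cdot\tilde v=0$ in $\HH^*_{et}(k(X))$. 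Unwinding this using the defining relations $\mu_a\tilde v=-\lambda_a\tilde u$ and $\mu_b\tilde v=-\lambda_b\tilde u$, together with the graded commutativity of $\HH^*_{et}$, should yield the claimed identity $\mu_b\lambda_a+(-1)^{\ddeg(\lambda_a)\ddeg(\lambda_b)}\mu_a\lambda_b=0$ in $\HH^*_{et}(k)$.

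The hard part will be the careful accounting of graded-commutativity signs throughout. Both the motivic cup-product computation and the subsequent unwinding in $\HH^*_{et}(k(X))$ generate numerous sign factors from interchanges of classes of various degrees, and matching them precisely against the exponent $\ddeg(\lambda_a)\ddeg(\lambda_b)$ in the proposition will require meticulous bookkeeping. Additional care will also be needed to ensure that the identity, which a priori is derived from a relation in $\HH^*_{et}(k(X))$, in fact descends to $\HH^*_{et}(k)$ in exactly the claimed form, and that no lower terms from the decomposition of $\mocod{2}$ contribute spurious pieces.
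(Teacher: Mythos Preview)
Your plan has two genuine gaps, and the paper takes a different, cleaner route.

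First, the cup product $\alpha_a\cdot\alpha_b$ is computed in $\moco{*}{*'}(X_k,\zz/5)$, not in $\moco{*}{*'}(M_k,\zz/5)$, since $M$ carries no coalgebra structure. Your claim $v^2=0$ therefore concerns $\moco{4}{4}(X_{\cc},\zz/5)$, and this group is \emph{not} zero: the Tate summand $\zz(2)[4]$ in $M(X)$ contributes a copy of $\zz/5$ (the class $\eps\tau^2$). So $v^2$ could be a nonzero multiple of $\eps\tau^2$. More seriously, the text only records $u\cdot v=m\tau Q_0(v)$ for \emph{some} $m\in\zz/5$; nothing rules out $m=0$, and if $m=0$ your expansion produces no $Q_0(v)$-term at all and the argument collapses.

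Second, even granting your computation, the step from $(0,mN)\in A(k)$ to $N=0$ in $\HH^*_{et}(k)$ does not work. Membership in $A(k)$ only says $\tilde v\cdot mN=0$ in $\HH^*_{et}(k(X))$. Your proposed ``unwinding'' via $\tilde v\mu_c=-\tilde u\lambda_c$ converts this into a relation of the shape $\tilde u\cdot(\text{combination of }\lambda_a\lambda_b)=0$ in $\HH^*_{et}(k(X))$, which is a statement about products with $\tilde u$, not about $N$ itself; there is no mechanism to descend from there to $N=0$ in $\HH^*_{et}(k)$.

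The paper sidesteps both problems by pairing a first-diagonal class with a \emph{second}-diagonal class rather than with another first-diagonal one. Set $\alpha=u\lambda_a+v\mu_a$ and $\beta=Q_0(u)\lambda_b+Q_0(v)\mu_b$. Then $\tau^{-1}\alpha\in\mocod{1}(M_k)$ and $\tau^{-1}\beta\in\mocod{2}(M_k)$, so their product lies in $\mocod{3}(X_k,\zz/5)=0$ (as $X$ is a surface), whence $\alpha\cdot\beta=0$. Pushing forward to the point and using the already established pairings $u\cdot Q_0(v)=Q_0(u)\cdot v=\eps\tau$ (while $\pi_*(u\cdot Q_0(u))$ and $\pi_*(v\cdot Q_0(v))$ vanish for degree reasons) yields $\tau\cdot\big((-1)^{\ddeg(\lambda_a)}\lambda_a\mu_b+\mu_a\lambda_b\big)=0$ directly in $\HH^*_{et}(k)$, and injectivity of $\tau$ finishes. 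No knowledge of $u\cdot v$ or $v^2$ is needed.
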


\begin{proof}
 We know that $u\cdot Q_0(v)=Q_0(u)\cdot v=\eps\cdot\tau\in\moco{4}{3}(X_k,\zz/5)$, where $\eps$ is the class of a $\cc$-point.
 Let $\alpha=(u\cdot \lambda_a+v\cdot\mu_a)$
 and $\beta=(Q_0(u)\cdot\lambda_b+Q_0(v)\cdot\mu_b)$

 Since $\tau^{-1}\cdot\alpha\in \tau^{-1}\cdot(u,v)\cdot A$ belongs to the 1-st diagonal, while $\tau^{-1}\cdot\beta$ belongs to the 2-nd diagonal in
 motivic cohomology of $M$, their product in $\moco{*}{*'}(X,\zz/5)$ must be zero. Then so is the product
 of $\alpha$ and
 $\beta$. But the push-forward 
 $\pi_*(\alpha\cdot\beta)$ of the latter product to the
 point is equal to $\tau\cdot((-1)^{\ddeg(\lambda_a)}\lambda_a\mu_b+\mu_a\lambda_b)$. Since multiplication
 by $\tau$ is injective in $\moco{*}{*'}(k,\zz/5)$, and elements anticommute with respect to the square degree (and $\ddeg(\lambda)=\ddeg(\mu)+1$), the triviality of this element is equivalent
 to the equation we need to prove.
 \Qed
\end{proof}

We have a map $M\stackrel{(u,v)}{\lrow}\zz/5(1)[1]\oplus\zz/5(2)[2]$. The dual of it under 
$\Homi(-,\zz(2)[4])$ will be
$\zz/5[1]\oplus\zz/5(1)[2]\stackrel{(v^{\vee},u^{\vee})}{\lrow}M$ (note that
$\Homi(\zz/5,\zz)=\zz/5[-1]$). Let $\pi:X\row\op{Spec}(\cc)$ be the projection. Since 
$\pi_*(u\cdot Q_0(v))=\pi_*(v\cdot Q_0(u))=\tau$
and $\pi_*(v\cdot Q_0(v))=\pi_*(u\cdot Q_0(u))=0$
in $\moco{*}{*'}(\cc,\zz/5)$, it follows that
$u\circ v^{\vee}=v\circ u^{\vee}=\cdot\tau$, while
$u\circ u^{\vee}=v\circ v^{\vee}=0$ and we obtain 
a self-dual composition
$$
\xymatrix{
\zz/5[1]\oplus\zz/5(1)[2] \ar[r]_(0.75){(v^{\vee},u^{\vee})}
\ar@/^1.5pc/ @{->}[rr]^{\cdot\tau}
&
M \ar[r]_(0.2){(u,v)}&
\zz/5(1)[1]\oplus\zz/5(2)[2]
}.
$$
Since $M$ is killed by multiplication by $5$, by the Beilinson-Lichtenbaum ``conjecture'', the map
$\moco{4}{2}(M\otimes M,\zz)\row \HH^4_{et}(M\otimes M,\zz/5)$ is injective. Since $(u,v)$ induces an isomorphism
on etale cohomology, our composition shows that
$M_{et}=(\zz/5)_{et}[1]\oplus(\zz/5)_{et}[2]$.

The above composition can be extended to a self-dual
octahedron:
$$
\def\objectstyle{\scriptstyle}
\def\labelstyle{\scriptstyle}
\xymatrix @-0.0pc{
 &{\ct}(-1) \ar[d] \ar[dr]^{\tau} & \\
 M_{\bullet} \ar[ru]^(0.4){[1]}  & M \ar[l] \ar[r] 
 \ar@{}[rd]|-(0.22){\star} \ar@{}[lu]|-(0.22){\star}& 
 {\ct}\ar[dl]^{[1]}\\
 & M^{\bullet} \ar[ul] \ar[u] &
}\hspace{8mm}
\xymatrix @-0.0pc{
 &{\ct}(-1) \ar[dr]^{\tau} & \\
 M_{\bullet} \ar[ru]^(0.4){[1]} \ar[r]^{[1]} & \ck \ar[u] \ar[d] 
 \ar@{}[ld]|-(0.22){\star} \ar@{}[ru]|-(0.22){\star}& 
 {\ct}\ar[dl]^{[1]} \ar[l]^{[1]}\\
 & M^{\bullet} \ar[ul] &
}
$$
where $\ct=\zz/5(1)[1]\oplus\zz/5(2)[2]$ and
$\ck=\km(1)\oplus\km(2)[1]$. All the objects here are compact.
The computation of motivic cohomology of $M$ above
gives that $\mocod{i}(M^{\bullet},\zz)=0$, for $i\neq 2$, while
$\mocod{2}(M^{\bullet},\zz)$ can be identified with
$\delta(\tau^{-1}\cdot(u,v)\cdot A)$, where
$\delta:\moco{*}{*'}(-,\zz/5)\row\moco{*+1}{*'}(-,\zz)$
is the connecting homomorphism. Under the pull-back
with respect to $\ck\row M^{\bullet}$ it is identified
with the submodule $(\ov{u},\ov{v})\cdot A$ of $\ov{u}\cdot \HH^*_{et}\oplus\,\ov{v}\cdot \HH^*_{et}=
\moco{*}{*'}(\ck,\zz)$, where $(\ov{u},\ov{v})$ is
$\delta$ of the canonical projection $\ck\row\ct(-1)$.
This implies that motivic cohomology of $M_{\bullet}$
is concentrated on the 3-rd diagonal and
$\mocod{3}(M_{\bullet},\zz)=(\ov{u},\ov{v})\cdot\left(\HH^*_{et}\oplus \HH^*_{et}/A\right)[1]$. The dual
calculations with $\mohod{i}$ - diagonals $\HH^{{\cal M}}_{i+*,*}$ in motivic homology show that we have an exact sequence:
$$
0\row\mohod{0}(M_{\bullet},\zz)\row\left(\ov{v}^{\vee}\cdot \HH^*_{et}\oplus\, \ov{u}^{\vee}\cdot \HH^*_{et}\right)
\row\mohod{-1}(M^{\bullet},\zz)\row 0,
$$
where $\mohod{0}(M_{\bullet},\zz)=
(\ov{v}^{\vee},\ov{u}^{\vee})\cdot A$, 
$\mohod{-1}(M^{\bullet},\zz)=(\ov{v}^{\vee},\ov{u}^{\vee})\cdot\left(\HH^*_{et}\oplus \HH^*_{et}/A\right)$
and all other diagonals are zero. Since motivic homology of $M_{\bullet}$ are concentrated on the zeroth diagonal, this object belongs to the heart of
the homotopy $t$-structure on $\dmk$ - see \cite{Deg}.
This heart can be identified with the abelian category of Rost cycle modules.
The cycle module of Rost corresponding to $M_{\bullet}$ is exactly $A$. Under this identification, $A$ is a graded 
submodule
of $\HH^*_{et}\la-1\ra\oplus \HH^*_{et}\la-2\ra$, where $\la m\ra$ indicates the shift in degrees. We will use this convention below.

We have an adjoint pair
$$
\xymatrix{
\dmkF{\zz} \ar@/_0.7pc/ @{->}[r]_{\nu^*} & 
\dmkF{\zz/5} \ar@/_0.7pc/ @{->}[l]_{\nu_*}
}
$$
where $\nu^*$ respects the $\otimes$ and $\nu_*$ satisfies
the projection formula. Since the cycle module $A$ is
5-torsion, it comes from $\dmkF{\zz/5}$ together with
the map $M_{\bullet}\row\ck[1]$. By obvious reasons,
the same is true about the map $\ck\row\ct(-1)$. Hence,
the motive $M$ itself is $\nu_*({\mathbf M})$, for some
motive ${\mathbf M}\in\dmkF{\zz/5}$.

Let $\Delta_M\in\moco{4}{2}(M\otimes M,\zz)$ be the
composition $M\otimes M\row M(X\times X)\stackrel{\Delta_X}{\row}\zz(2)[4]$. The etale version of it modulo $5$ is detected in the topological realisation and so, is equal to
$$
(\ov{\Delta}_M)_{et}=-u_{et}\times Q_0(v_{et})+v_{et}\times Q_0(u_{et})+Q_0(v_{et})\times u_{et}+Q_0(u_{et})\times v_{et}.
$$ 
Since $M$ is killed by $5$, the modulo
5 version $\ov{\Delta}_M$ is $Q_0$ of some element
from $\moco{3}{2}(M\otimes M,\zz/5)$. But the map from
Nisnevich to etale topology is injective on
$\mocod{1}(M\otimes M,\zz/5)$. Hence, 
$\ov{\Delta}_M=Q_0(\tau^{-1}(u\times v+v\times u))$
and so,
$\Delta_M=\delta(\tau^{-1}(u\times v+v\times u))$.
Since $\Delta_M$ is nothing else but the projector $(1-\rho)$ in Proposition \ref{M-dual}, and the remaining summands are Tate, we readily obtain
the following result.

\begin{prop}
 \label{delta-X}
 The class of the diagonal of the Godeaux surface can be expressed as
 $$
 \Delta_X=1\times\eps+\eps\times 1+\sum_i\alpha_i\times\beta_i+\delta(\tau^{-1}(u\times v+v\times u)),
 $$
 where $\alpha_i,\beta_i\in\CH^1(X)$, 
 $u\!\in\!\moco{1}{1}(X,\zz/5)$, $v\!\in\!\moco{2}{2}(X,\zz/5)$
 are elements introduced above and $\eps$ is the class
 of a $\cc$-point.
\end{prop}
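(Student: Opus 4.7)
The plan is to decompose $\Delta_X$ according to the direct sum splitting $M(X) = \rho M(X) \oplus M$ constructed in Proposition \ref{M-dual}. Since $\rho$ and $1-\rho$ are orthogonal projectors in $\CH^2(X \times X)$ summing to $\Delta_X$, it suffices to identify each of them with one half of the claimed expression.

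For the Tate part $\rho$, I would invoke the explicit formula $\rho = \rho_1 + \rho_2$ from the proof of Proposition \ref{M-dual}, with $\rho_2 = [X \times pt] + [pt \times X]$ contributing exactly $1 \times \eps + \eps \times 1$, and $\rho_1 = \Phi_{A^{-1}} = \sum_{i,j} (A^{-1})_{i,j}\, e_i \times e_j$ for the chosen basis $e_i$ of the free part of $\op{Pic}(X)$. Relabelling these entries as $\sum_k \alpha_k \times \beta_k$ with $\alpha_k, \beta_k \in \CH^1(X)$ produces the middle summand of the claimed formula.

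For the torsion part $1-\rho = \Delta_M$, the work is essentially carried out in the paragraph preceding the statement. One first computes the topological realisation $(\ov{\Delta}_M)_{et}$ via the K\"{u}nneth decomposition and the explicit description of $\HH^*(M(\cc),\zz/5)$ in terms of $u,v$ and their Bocksteins. Next, using that $5\cdot \Delta_M = 0$, the Bockstein long exact sequence expresses $\ov{\Delta}_M$ as $Q_0$ of some class in $\moco{3}{2}(M \otimes M, \zz/5)$; the injectivity of the Nisnevich-to-\'etale comparison on $\mocod{1}(M \otimes M, \zz/5)$ then pins down that class uniquely as $\tau^{-1}(u \times v + v \times u)$. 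Lifting by $\delta$ to integral coefficients yields $\Delta_M = \delta(\tau^{-1}(u\times v + v\times u))$, which is the remaining summand in the stated formula.

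The only step that is not essentially formal is the uniqueness of the Nisnevich lift in the second paragraph above, which is the main technical obstacle. It rests on the complete description of the motivic cohomology of $M$ obtained earlier in this subsection via the Hodge half-diamond analysis, which constrains the kernel of the Nisnevich-to-\'etale comparison on the first diagonal to be trivial in the relevant bidegree. Summing the Tate and torsion contributions then recovers the claimed expression for $\Delta_X$.
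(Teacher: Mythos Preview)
Your proposal is correct and follows essentially the same approach as the paper: decompose $\Delta_X = \rho + (1-\rho)$ using the projector from Proposition~\ref{M-dual}, identify $\rho = \rho_1 + \rho_2$ with the Tate summands $1\times\eps + \eps\times 1 + \sum_i\alpha_i\times\beta_i$, and invoke the computation $\Delta_M = \delta(\tau^{-1}(u\times v + v\times u))$ carried out in the paragraph immediately preceding the statement. Your summary of that computation, including the role of the injectivity on $\mocod{1}$, accurately reflects the paper's argument.
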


Restricting to the generic point of $X$ (on one of the components of $X\times X$), and taking into account that $Q_0(u)$ and $Q_0(v)$ disappear, when restricted 
from $X$ to $\op{Spec}(\cc(X))$, we obtain that the class
$\eta$ of the generic point of $X$ in 
$\CH^2(X_{\cc(X)})/5$ is equal to
$$
\eta=\ov{\Delta}_X|_{\cc(X)}=\eps+
\tau^{-1}(Q_0(v)\cdot\tilde{u}+Q_0(u)\cdot\tilde{v}).
$$
Hence, $\tau^{-1}(Q_0(v)\cdot\tilde{u}+Q_0(u)\cdot\tilde{v})$ is the difference $\eta-\eps$ between the classes of the generic and complex points. This zero cycle of degree zero on $X$
represents a nonzero element in $\CH^2(M_{\cc(X)})$. Thus
$(\tilde{v},\tilde{u})\in A(\cc(X))$ is a non-trivial
section of our cycle module $A$ (indeed, we know that
the elements $\tilde{u},\tilde{v}\in \HH^*_{et}(\cc(X))$
are non-zero). It appears that this section generates
$A$ as a Rost cycle module.

\begin{prop}
 \label{A-gen}
 The Rost cycle module $A$ is generated by the section
 $(\tilde{v},\tilde{u})\in A(\cc(X))$.
\end{prop}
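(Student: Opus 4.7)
The plan is to show that the Rost cycle submodule $A'\subset A$ generated by $(\tilde{v},\tilde{u})\in A(\cc(X))$ coincides with $A$. Since the category of Rost cycle modules is abelian, the quotient $Q:=A/A'$ is again a Rost cycle module, and the vanishing of $Q$ is equivalent to $Q(E)=0$ for every finitely generated field extension $E/\cc$. The task thus reduces to exhibiting any section of $A(E)$ as obtainable from $(\tilde{v},\tilde{u})$ by the four cycle module operations: restriction along field extensions, transfer along finite extensions, multiplication by elements of $\HH^*_{et}(-,\zz/5)$, and residues.

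The geometric meaning of the generator is given by Corollary \ref{zero-cycl-Godeaux}: under the identification $\CH_0(M_E)=A^0(E)$, the section $(\tilde{v},\tilde{u})$ corresponds to the class $\eta-\eps\in\CH_0(M_{\cc(X)})$ of the generic point minus a $\cc$-rational point, as computed in the paragraph preceding the proposition. This is the ``universal'' zero-cycle on $M$: every closed-point contribution to $\CH_0(M_E)$ for arbitrary $E/\cc$ arises by specialising $\eta$ along $X_E\row\op{Spec}(E)$, and once the degree zero sections are accounted for, multiplication by $\HH^*_{et}(E,\zz/5)$ produces sections in the remaining degrees of $A$.

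The decisive input is the explicit form of the diagonal from Proposition \ref{delta-X}. Restricted to the direct summand $M$, all Tate contributions vanish, leaving
$$\Delta_M=\delta(\tau^{-1}(u\times v+v\times u))\in\CH^2(M\otimes M,\zz),$$
so that the identity endomorphism of $M$ is presented as a self-dual composition factoring through $\ct=\zz/5(1)[1]\oplus\zz/5(2)[2]$ and $\ck=\km(1)\oplus\km(2)[1]$, built from $(u,v)$ and its dual $(v^{\vee},u^{\vee})$. Applying $\op{id}_M(a)=a$ to an arbitrary $a\in A(E)$ through this factorisation expresses $a$ as a cycle module combination of the restriction of $(\tilde{v},\tilde{u})$ to $E(X)$, multiplied by the coordinates $\alpha,\beta$ of $a$, and then pushed forward through $X_E\row\op{Spec}(E)$; this exhibits $a$ as lying in $A'(E)$.

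The main obstacle is carrying out the descent step rigorously inside the cycle module formalism. One must verify that the proper pushforward along $X_E\row\op{Spec}(E)$ hidden inside $\Delta_M$ translates, under the equivalence between the heart of the homotopy $t$-structure and Rost cycle modules, into the correct combination of residues at codimension-$1$ points of $X_E$ (picking up the unramified classes $\tilde{u},\tilde{v}$), followed by transfers from the closed-point residue fields back to $E$. This compatibility should follow from the general structure of the equivalence together with the Beilinson--Lichtenbaum identification of motivic and \'etale cohomology in the relevant range, but its precise bookkeeping is the technical heart of the argument.
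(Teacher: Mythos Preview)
Your proposal is not a complete proof: you explicitly flag the ``descent step'' as the technical heart and leave it unverified. More importantly, the route you chose through the diagonal $\Delta_M$ and its factorisation via $\ct$ and $\ck$ is an unnecessary detour. You are trying to trace how the correspondence $\Delta_M$ acts on $A(E)$ and to translate the hidden pushforward along $X_E\to\op{Spec}(E)$ into Rost's four operations; this bookkeeping is genuinely awkward, and you never complete it.

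The paper's argument is much more direct and stays entirely inside the cycle module formalism. The key identification (established before the proposition) is that $A$ coincides, as a cycle module, with $\mocod{2}(M,\zz/5)=\kker\big(\mocod{2}(X,\zz/5)\stackrel{\pi_*}{\to}\HH^*_{et}\big)$. Now invoke \cite[Lemma~4.11]{Voe03} and \cite{RCM}: for a $d$-dimensional variety, $\mocod{d}(X,\zz)$ is literally Rost's ``Chow groups with coefficients in $K^M_*$''. This gives, for every extension $k/\cc$, an explicit additive generating set for $\mocod{2}(X_k,\zz/5)$: elements of the form $\op{Tr}_{E/k}([q]\cdot\alpha)$ with $E/k$ finite, $q\in X(E)$, $\alpha\in k^M_*(E)$. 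Passing to the kernel of $\pi_*$ replaces $[q]$ by $[q]-\eps$. Since any $E$-point $q$ is a specialisation of the generic point $\eta$, the class $[q]-\eps$ lies in the cycle submodule generated by $\eta-\eps$; multiplying by $\alpha$ and applying $\op{Tr}_{E/k}$ are further cycle module operations. This finishes the proof immediately.

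Your paragraph~2 already contains this intuition (``every closed-point contribution arises by specialising $\eta$''), but you never cite the structural input that makes it rigorous---namely, the Rost/Voevodsky description of $\mocod{d}$ as $\HH^d(X,\un{K}^M_*)$. Once you use that, the whole $\Delta_M$ machinery becomes superfluous.
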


\begin{proof}
 From the above we know that the cycle module $A$ 
 can be identified with the second diagonal in
 motivic cohomology of $M$, which is
 $\mocod{2}(M,\zz/5)=\kker(\mocod{2}(X,\zz/5)\stackrel{\pi_*}{\row}\HH^*_{et})$, where $\pi$ is the 
 projection to the point. By \cite[Lemma 4.11]{Voe03} and \cite{RCM}, for a $d$-dimensional variety $X$,
 $\mocod{d}(X,\zz)$ coinsides with the Rost cycle 
 module of ``Chow groups with coefficients'' in Milnor's K-theory $\HH^d(X,\un{K}^M_*)$. And the
 projection $\mocod{d}(X,\zz)\row\mocod{d}(X,\zz/p)$
 is surjective. That means that, in our case, 
 $\mocod{2}(X_k,\zz/5)$ is additively generated by 
 elements of the form $\op{Tr}_{E/k}([q]\cdot\alpha)$,
 where $E/k$ is some finite extension, $q\in X(E)$ and
 $\alpha\in k^M_*(E)=K^M_*(E)/5$. Then the 
 $\kker(\mocod{2}(X,\zz/5)\stackrel{\pi_*}{\row}\HH^*_{et})$ is additively generated by the elements
 of the form $\op{Tr}_{E/k}(([q]-\eps)\cdot\alpha)$,
 where $\eps$ is the class of a $\cc$-point. But
 $[q]-\eps$ is a specialization of $\eta-\eps$. Hence,
 $\mocod{2}(M,\zz/5)$ as a Rost cycle module is 
 generated by $\eta-\eps=
 \tau^{-1}(Q_0(u)\cdot\tilde{v}+Q_0(v)\cdot\tilde{u})$.
 In other words, the cycle module $A$ is generated by 
 $(\tilde{v},\tilde{u})\in A(\cc(X))$.
 \Qed
\end{proof}

In particular, $\op{CH}_0(M_{\cc(X)})=\zz/5$ spanned by
$(\tilde{v},\tilde{u})$.
We also obtain:

\begin{prop}
 \label{A-max-ort}
 $A$ is a ``Lagrangian'' submodule of 
 $\HH^*_{et}\la-1\ra\oplus \HH^*_{et}\la-2\ra$, i.e. 
 a maximal submodule satisfying the orthogonality
 conditions $(\ref{ortog})$.
\end{prop}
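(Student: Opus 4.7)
My plan is to combine the isotropy $A \subseteq A^{\perp}$ already obtained in Proposition \ref{orthog-A} with the fact that $A$ is generated as a Rost cycle module by the single section $(\tilde{v},\tilde{u}) \in A(\cc(X))$ (Proposition \ref{A-gen}). All the work is in the reverse inclusion $A^{\perp} \subseteq A$: once the generic generator enforces the defining relation of $A$, maximality is immediate.

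Given $(\lambda,\mu) \in A^{\perp}(k)$ inside $\HH^*_{et}(k)\la -1\ra \oplus \HH^*_{et}(k)\la -2\ra$ for an arbitrary extension $k/\cc$, I interpret $A^{\perp}$ as a sub-Rost-cycle-module of the ambient hyperbolic module (so orthogonality is required after any field pullback). Then pulling $(\lambda,\mu)$ back to $k(X)$ places it in $A^{\perp}(k(X))$, and hence it pairs to zero with the generic section $(\tilde{v},\tilde{u}) \in A(k(X))$. Substituting $(\lambda_{a},\mu_{a}) = (\tilde{v},\tilde{u})$ and $(\lambda_{b},\mu_{b}) = (\lambda,\mu)$ into the orthogonality relation (\ref{ortog}) yields
$$
\mu \cdot \tilde{v} \,+\, (-1)^{\ddeg\tilde{v}\cdot\ddeg\lambda}\,\tilde{u}\cdot\lambda \;=\; 0 \;\in\; \HH^*_{et}(k(X)).
$$
Because $\tilde{v}$ has even etale degree, the Koszul sign in this relation is $+1$, and graded-commutativity of $\HH^*_{et}$ rewrites $\mu\cdot\tilde{v}$ as $\tilde{v}\cdot\mu$; the displayed relation thus becomes $\tilde{u}\lambda + \tilde{v}\mu = 0$ in $\HH^*_{et}(k(X))$. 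This is precisely the defining equation for membership of $(\lambda,\mu)$ in $A(k)$. Hence $A^{\perp}(k) \subseteq A(k)$, and combined with Proposition \ref{orthog-A} we conclude $A = A^{\perp}$.

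The main obstacle is tracking the Koszul sign in (\ref{ortog}) and matching it with the reordering of $\mu\cdot\tilde{v}$ coming from graded-commutativity, to see that \emph{exactly} the unsigned defining equation of $A$ appears (rather than some twisted version that would not be recognizable as the $A$-membership condition). This compatibility is in fact built into the form, which was designed to make Proposition \ref{orthog-A} an identity; but it does rely on $\ddeg\tilde{v} = 2$ being even, a feature one would have to re-examine for other surfaces whose analogous generators had different parities.
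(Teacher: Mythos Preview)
Your proof is correct and follows essentially the same route as the paper: both arguments hinge on the fact that the defining equation $\tilde{u}\lambda+\tilde{v}\mu=0$ for membership in $A(k)$ is precisely the orthogonality relation (\ref{ortog}) against the single section $(\tilde{v},\tilde{u})\in A(k(X))$, so anything orthogonal to $A$ over $k(X)$ already lies in $A(k)$. One small remark: you invoke Proposition~\ref{A-gen}, but you only use that $(\tilde{v},\tilde{u})$ \emph{belongs} to $A(\cc(X))$, not that it generates $A$; the paper's proof likewise needs only this weaker fact.
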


\begin{proof}
We know that $(\tilde{v},\tilde{u})\in A(\cc(X))$.
On the other hand, for any field $k/\cc$,
$(\lambda,\mu)\in A(k)$ if and only if 
$\tilde{u}\cdot\lambda+\tilde{v}\cdot\mu=0\in \HH^*_{et}(k(X))$. Hence, $A(k)=(\HH^*_{et}(k)\la-1\ra\oplus \HH^*_{et}(k)\la-2\ra)
\cap A(k(X))^{\perp}$ and so, our submodule can't be enlarged without breaking the orthogonality conditions.
 \Qed
\end{proof}

\begin{rem}
 From Propositions \ref{A-gen} and
 \ref{A-max-ort} we see that $A$ is the unique submodule of $\HH^*_{et}\la-1\ra\oplus \HH^*_{et}\la-2\ra$ containing
 $(\tilde{v},\tilde{u})$ and satisfying the orthogonality conditions.
\end{rem}

\subsection{$p$-torsion motives of surfaces}

In this Subsection, unless specified otherwise, our ground field $k$ will be an algebraically closed field of characteristic zero.
Let us start with some general facts about torsion direct summands of surfaces. 

\begin{prop}
 \label{MN-kon}
 Let $M$ and $N$ be torsion direct summands in the motives of surfaces. Then the group $\Hom(M,N)$ is finite.
\end{prop}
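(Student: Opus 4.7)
My plan is to reduce the finiteness of $\Hom(M,N)$ to finiteness of etale cohomology of an auxiliary $p$-torsion motive on a $4$-fold over $\kbar$, using the structural filtration of Proposition \ref{mot-coh-u-v}.

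First, $\Hom(M,N)$ is a torsion abelian group (killed by the LCM of the exponents of $M$ and $N$), so it splits into its $p$-primary components. Since $\Hom$ between a $p$-primary and a $q$-primary torsion motive for $p\neq q$ is zero, we may assume $M$ and $N$ are both $p$-primary for a fixed $p$. Using the exact triangles $M \stackrel{p}{\row} M \row M/p$ (viewing Chow motives inside $\op{DM}(k)$) together with the long exact $\Hom$-sequence, an induction on the minimal $k$ with $p^k M = p^k N = 0$ further reduces the problem to the case $pM = pN = 0$.

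Writing $M=(X,\rho)$ and $N=(Y,\sigma)$ for smooth projective surfaces $X, Y$, the group $\Hom(M,N)$ is the direct summand of $\CH^2(X\times Y)$ cut out by $\rho^t\otimes\sigma$, annihilated by $p$. Setting $T=M^\vee\otimes N$ (a $p$-torsion motive, direct summand of the $4$-fold motive $M(X\times Y)$), we identify $\Hom(M,N)=\hm^{4,2}(T,\zz)$ after Tate shifts. Since $T$ is $p$-torsion and $\hm^{5,2}(V,\zz)=0$ for smooth $V$, the Bockstein sequence gives an embedding $\hm^{4,2}(T,\zz)\hookrightarrow\hm^{4,2}(T,\zz/p)$, and it suffices to show the latter is finite. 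Now $T$ being $p$-torsion means $I(1)\subseteq\ov{J}(T_{\zz_{(p)}})$, so Proposition \ref{mot-coh-u-v} structures $\hm^{*,*'}(T,\zz/p)$ via a filtration whose successive quotients are $\Lambda[\tau]$-modules ($\Lambda=\Lambda_{\ff_p}(Q_0)$) of the form $\Lambda[\tau]\cdot u_\alpha$ or $\Lambda[\tau]/(\tau^{l_\beta})\cdot v_\beta$ with generators in bidegrees $(a,b)$, $b\leq a\leq 2b$, $0\leq b\leq 4$. Because $\op{H}^*_{et}(T_{\kbar},\zz/p)$ is a direct summand of the finite etale cohomology of $X\times Y$ over $\kbar$, it is finite; the Beilinson-Lichtenbaum isomorphism identifies the on-diagonal ($a=b$) part of each filtration quotient with its etale cohomology, bounding the generators in each bidegree, and the exact $Q_0$-action from Corollary \ref{torsion-M-Margolis} together with the $\tau$-exact sequences propagates the bound to the off-diagonal generators. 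Consequently $\hm^{a,b}(T,\zz/p)$ is finite in each bidegree; in particular $\hm^{4,2}(T,\zz/p)$ is finite, and so is $\Hom(M,N)$.

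The main obstacle is the precise propagation from etale cohomology on the $a=b$ diagonal to the number of generators in the off-diagonal bidegrees of Proposition \ref{mot-coh-u-v}. This relies on the $\tau$-spectral sequence picture developed in the Godeaux section (in particular the short exact sequences relating $\km$-motivic cohomology to $\zz/p$-motivic cohomology via multiplication by $\tau$) together with the $Q_0$-exactness, which together ensure that each generator in bidegree $(a,b)$ is accounted for by a finite etale class after applying enough copies of $Q_i$ and $\tau$.
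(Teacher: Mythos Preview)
Your approach works but is considerably more complicated than needed, and the ``propagation'' step you yourself flag as the main obstacle is not made precise. The paper's proof bypasses all your reductions with a two-line argument: since $\Hom(M,N)\hookrightarrow\CH^2(X\times Y)_{n\text{-tors}}$ for any $n$ killing $M$ or $N$, and the Bockstein gives a surjection $\hm^{3,2}(X\times Y,\zz/n)\twoheadrightarrow\hm^{4,2}(X\times Y,\zz)_{n\text{-tors}}$, it suffices to note that $\hm^{3,2}(X\times Y,\zz/n)\hookrightarrow \HH^3_{et}(X\times Y,\zz/n)$ by Beilinson--Lichtenbaum (as $3\leq 2+1$), and the latter is finite over the algebraically closed ground field. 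No passage to $T=M^\vee\otimes N$, no reduction to exponent $p$, and no structural filtration are required; moreover the paper's argument yields the stronger fact that only one of $M,N$ need be torsion.

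The machinery of Proposition~\ref{mot-coh-u-v} is overkill here. What your propagation step really needs is just this: $Q_0$ is an exact differential on $\hm^{*,*'}(T,\zz/p)$ and $\hm^{5,2}(T,\zz/p)=0$, so $\hm^{4,2}(T,\zz/p)=\kker(Q_0)=\op{Im}(Q_0)$ is a quotient of $\hm^{3,2}(T,\zz/p)$, which is finite by Beilinson--Lichtenbaum. Once unwound, this is exactly the paper's argument with the mod-$p$ Bockstein $Q_0$ in place of the integral connecting map $\delta$. Your description of the on-diagonal part of the filtration quotients as ``etale cohomology'' is imprecise (it is the total on-diagonal motivic cohomology of $T$, not of the individual pieces, that agrees with \'etale cohomology), and the filtration itself plays no role once the $Q_0$-argument is isolated.
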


\begin{proof}
 We will prove a stronger result: it is
 sufficient to assume that one of $M$ or $N$ is torsion.
 If $M$ and $N$ are direct summands in the motives of surfaces $X$ and $Y$, respectively, and $n$ kills $M$, or 
 $N$, then $\Hom(M,N)\hookrightarrow\CH^2(X\times Y)_{n-tors}$. But 
 $$
 \hm^{4,2}(X\times Y,\zz)_{n-tors}
 \stackrel{\delta}{\twoheadleftarrow}
 \hm^{3,2}(X\times Y,\zz/n)\hookrightarrow H^{3}_{et}(X\times Y,\zz/n), 
 $$
 where the last inclusion follows from 
 the Beilinson-Lichtenbaum ``conjecture''. 
 The rightmost group is finite, since $k$ is algebraically closed.
 \Qed
\end{proof}

This readily implies:

\begin{thm}
 \label{K-S}
 Let $k$ be a field of characteristic zero. Then torsion direct summands in the motives of surfaces over $k$ satisfy
 Krull-Schmidt principle.
\end{thm}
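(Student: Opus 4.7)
Since the category of torsion direct summands of surface motives is Karoubian, the Krull--Schmidt property for an object $M$ is equivalent to the endomorphism ring $\End_k(M)$ being semiperfect (so that $1_M$ decomposes uniquely into a sum of primitive orthogonal idempotents with local corner rings). The plan is to leverage the finiteness of $\End_{\bar{k}}(M_{\bar{k}})$ provided by Proposition \ref{MN-kon} and push it down to $k$.

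Over the algebraic closure the conclusion is immediate: any finite ring is semiperfect, since its Jacobson radical is nilpotent, the quotient is a semisimple Artinian ring, and idempotents always lift modulo a nilpotent ideal. Hence Krull--Schmidt holds in the full subcategory of torsion summands of surface motives over $\bar{k}$; in particular, for an indecomposable such $N$, the finite ring $\End_{\bar{k}}(N_{\bar{k}})$ has only trivial idempotents and, modulo its Jacobson radical, becomes a finite division ring, so is itself local.

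For general $k$ of characteristic zero the plan is to compare $\End_k(M)$ with $\End_{\bar{k}}(M_{\bar{k}})$ via the base-change homomorphism. The image lies in the Galois-invariant subring of the finite ring $\End_{\bar{k}}(M_{\bar{k}})$ and is therefore itself finite. The kernel $I$ consists of self-correspondences of $M=(X,\rho)$ which become trivial after base change to $\bar{k}$; the key input would be that any correspondence on a smooth projective surface vanishing geometrically is nilpotent (Rost nilpotence for surfaces, available in the literature), so that $I$ is a nil ideal of $\End_k(M) = \rho\, \End_k(X)\, \rho$.

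Once this is in place the conclusion is formal: an extension of a finite (hence semiperfect) ring by a nil ideal is semiperfect, since the nil ideal sits inside the Jacobson radical, the semisimple quotients coincide, and idempotents lift across a nil ideal. Therefore $\End_k(M)$ is semiperfect and Krull--Schmidt follows. The \emph{main obstacle} is the appeal to Rost nilpotence for smooth projective surfaces; an alternative descent route using the Rost-cycle-module classification developed in Section \ref{section-four} over $\bar{k}$, combined with Galois invariance, might bypass it, but the nilpotence viewpoint gives the cleanest presentation.
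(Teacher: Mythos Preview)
Your proposal is correct and follows essentially the same route as the paper: both arguments rest on Proposition~\ref{MN-kon} for finiteness of $\End$ over $\bar k$ together with Rost nilpotence for surfaces (which the paper invokes via Gille~\cite{Gi14}) to control the kernel of base change. The only difference is packaging---you phrase the conclusion via the semiperfect-ring criterion for Krull--Schmidt, whereas the paper argues directly that an indecomposable $M$ sitting in $N\oplus L$ must embed in one factor, using idempotent-lifting by hand (via \cite[Lemma~2.4]{VY}); the underlying inputs and logic are the same.
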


\begin{proof}
 By the result of S.Gille \cite{Gi14}, the restriction to the algebraic closure functor is conservative on the direct summands of the motives of surfaces. Then Proposition 
 \ref{MN-kon} implies that every torsion direct summand in 
 a motive of a surface decomposes into a direct sum of finitely many simple ones. Now, suppose that a simple torsion motive $M$ is a direct summand in $N\oplus L$.
 Then there are maps 
 $
 \xymatrix{
 N \ar@/^0.7pc/ @{->}[r]^{\psi_1} & 
 M \ar@/^0.7pc/ @{->}[l]^{\ffi_1} 
 \ar@/_0.7pc/ @{->}[r]_{\ffi_2}&
 L \ar@/_0.7pc/ @{->}[l]_{\psi_2}
 }
 $
 such that $\alpha=\psi_1\circ\ffi_1$ and
 $\beta=\psi_2\circ\ffi_2$ satisfy $\alpha+\beta=id_M$.
 Denote as $\ov{\alpha}$, etc. the restrictions to $\kbar$.
 Since $\End(\ov{M})$ is finite, some powers $\ov{\alpha}^n$ and 
 $\ov{\beta}^m$ are idempotents. If these are both zero, then
 $id_{\ov{M}}=(\ov{\alpha}+\ov{\beta})^{n+m}=0$ (note that $\alpha$ and 
 $\beta$ commute), which gives a contradiction.
 Hence, some power of $\ov{\alpha}$ or $\ov{\beta}$ is
 equal to a non-zero idempotent in $\End(\ov{M})$. Since 
 $\kker(\End(M)\row\End(\ov{M}))$ consists of nilpotents
 by the mentioned result of S.Gille, 
 and $M$ is indecomposable,
 by \cite[Lemma 2.4]{VY}, there exists some integral polynomial $\phi$ without constant term, whose value on 
 $\alpha$ or $\beta$ is equal to $id_M$. Then $M$ is a direct summand of one of $N$, or $L$. This shows that the decomposition into irreducibles is unique up to reordering.
 \Qed
\end{proof}

\begin{cor}
 \label{simple-M-con}
 In the situation of Theorem \ref{K-S}, 
 any indecomposable object 
 is a direct summand of the motive of a connected surface.
\end{cor}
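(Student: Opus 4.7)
The approach is to reduce the corollary to the inductive step already carried out in the proof of Theorem \ref{K-S}. Suppose $M$ is an indecomposable torsion direct summand of $M(X)$ for some smooth projective surface $X$, which a priori need not be connected. Writing $X=X_1\sqcup\cdots\sqcup X_r$ for its connected components gives $M(X)=\bigoplus_{i=1}^r M(X_i)$, so the task reduces to showing that $M$ is isomorphic to a direct summand of a single $M(X_i)$.

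The key inductive lemma is exactly what the last part of the proof of Theorem \ref{K-S} establishes: if an indecomposable torsion motive $M$ is a direct summand of $N\oplus L$, then $M$ is a direct summand of one of $N$ or $L$. Indeed, writing $id_M=\alpha+\beta$ with $\alpha$ and $\beta$ factoring through $N$ and $L$ respectively, one passes to $\kbar$ where $\End(\ov{M})$ is finite by Proposition \ref{MN-kon}, finds that some power of $\ov{\alpha}$ or $\ov{\beta}$ is a non-zero idempotent hence equal to $id_{\ov{M}}$ by indecomposability, and then lifts back to $k$ using S.Gille's result and \cite[Lemma 2.4]{VY}. Iterating this lemma $r-1$ times realises $M$ as a direct summand of some $M(X_i)$.

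The final step is to check that $X_i$ is genuinely a surface. Since $M$ is torsion, it cannot be a direct summand of the motive of a $0$-dimensional variety (whose Chow endomorphism ring is torsion-free) nor of a curve (by Proposition \ref{no-in-curves}). Hence $\ddim(X_i)=2$, and $X_i$, being a connected component of $X$, is the required connected surface. I do not expect a substantive obstacle beyond what was already addressed in Theorem \ref{K-S}; the corollary is essentially a bookkeeping consequence of that result combined with the dimension obstruction from Proposition \ref{no-in-curves}.
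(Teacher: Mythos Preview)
Your proof is correct and matches the paper's implicit reasoning (the corollary is stated without proof there, as an immediate consequence of the exchange lemma established inside Theorem~\ref{K-S}). One minor slip in your recap of that lemma: you say the nonzero idempotent over $\kbar$ is ``equal to $id_{\ov{M}}$ by indecomposability'', but $\ov{M}$ need not be indecomposable --- the paper instead uses indecomposability of $M$ over $k$, nilpotence of $\kker(\End(M)\to\End(\ov{M}))$, and \cite[Lemma~2.4]{VY} to produce a polynomial in $\alpha$ or $\beta$ equal to $id_M$; this does not affect your argument for the corollary, since the lemma you invoke is correctly stated and already proven in Theorem~\ref{K-S}.
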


\begin{prop}
 \label{bir-inv}
 In the situation of Theorem \ref{K-S},
 the maximal torsion direct summand of the motive of a surface $X$ is a (poly-)birational invariant of $X$.
\end{prop}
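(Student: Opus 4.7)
The plan is to reduce birational invariance to a single blow-up at a closed point and then invoke the Krull-Schmidt principle of Theorem \ref{K-S}. Recall that for smooth projective surfaces over a field of characteristic zero, any birational equivalence between $X$ and $Y$ can be resolved by a third smooth projective surface $Z$ admitting birational morphisms $Z \to X$ and $Z \to Y$, each a finite composition of blow-ups at closed points (Zariski's classical theorem on surface birational maps). Hence it suffices to show that for the blow-up $\pi : \wt{X} \to X$ at a closed point, the maximal torsion direct summands of $M(X)$ and $M(\wt{X})$ coincide.

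For such a blow-up the standard Chow motive decomposition gives
\[
M(\wt{X}) \;\cong\; M(X) \oplus \zz(1)[2].
\]
The extra summand $\zz(1)[2]$ is a Tate motive, and in particular is not a torsion motive and contains no non-trivial torsion direct summand. By Theorem \ref{K-S}, torsion direct summands of motives of surfaces over $k$ satisfy Krull-Schmidt, so the multiplicity of each indecomposable torsion motive as a direct summand of a motive of a surface is a well-defined invariant. Since $\zz(1)[2]$ contributes no such summand, the multiplicities of the indecomposable torsion summands in $M(X)$ and in $M(\wt{X})$ agree, and the maximal torsion summands $T(M(X))$ and $T(M(\wt{X}))$ are isomorphic. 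Iterating along the resolution $Z$ produces $T(M(X)) \cong T(M(Z)) \cong T(M(Y))$ for any birationally equivalent smooth projective surfaces.

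For the polybirational extension the same blow-up and Krull-Schmidt argument applies to each step of any longer chain of birational equivalences between surfaces, so the invariance is preserved by iteration. The main delicate point throughout is the well-definedness of the maximal torsion direct summand of the motive as a motivic invariant, which is exactly what Theorem \ref{K-S} supplies; without this uniqueness result, one could not rule out that indecomposable torsion summands are rearranged or absorbed across a blow-up, since a priori a torsion piece could split off from $M(\wt{X})$ in a way that does not occur over $M(X)$.
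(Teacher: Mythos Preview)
Your approach is essentially the same as the paper's: reduce to a single blow-up in a smooth $0$-dimensional center, use the blow-up decomposition of the Chow motive, observe that the new summand carries no torsion, and conclude via the Krull--Schmidt principle of Theorem \ref{K-S}.

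Two small points worth tightening. First, since Theorem \ref{K-S} is stated over an arbitrary field of characteristic zero, the center $P$ of the blow-up need not be a $k$-rational point; the correct formula is $M(\wt{X})\cong M(X)\oplus M(P)(1)[2]$ with $P$ a $0$-dimensional smooth scheme, exactly as the paper writes. This changes nothing in the argument, as $M(P)(1)[2]$ is still torsion-free, but your $\zz(1)[2]$ is only right over an algebraically closed field. Second, the prefix ``poly-'' in the paper refers to possibly disconnected surfaces (same \emph{set} of generic points), not to longer chains of birational maps; the reduction to blow-ups still goes through componentwise.
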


\begin{proof}
 It follows from the proof of Theorem \ref{K-S} that the maximal torsion direct summand of the motive of a surface is
 well-defined. If two surfaces $X$ and $Y$ have the same set of generic points, then one can get from $X$ to $Y$ by a series of blow ups and downs in smooth 0-dimensional centers. So, it is sufficient to consider a single blow-up
 $Y\row X$. But then $M(Y)=M(X)\oplus M(P)(1)[2]$, where 
 $P$ is the center. Since $M(P)(1)[2]$ doesn't contain
 torsion direct summands, any torsion direct summand of 
 $M(Y)$ should be a direct summand of $M(X)$, by the mentioned arguments of Theorem \ref{K-S}. The converse is obvious.
 \Qed
\end{proof}

We will try to classify the $p$-torsion motives appearing as direct summands in the motives of surfaces. 

Suppose $M$ is a torsion direct summand in the motive of some
(smooth projective) surface $X$ (possibly, disconnected), such that $p\cdot id_M=0$. 
From Proposition \ref{sm-Met-kbar} we know that $\HH^i_{et}(M,\zz/p)=0$, for $i=0$ and $i=4$, and since $p\cdot id_M=0$, the (mod $p$) Bockstein acts as an exact differential on $\HH^*_{et}(M,\zz/p)$. And the same is true about 
$M^{\vee}=\Homi(M,\zz(2)[4])$. 
This 
implies that the Hodge half-diamonds for $M$ and
$M^{\vee}$ look as:
$$
\def\objectstyle{\scriptstyle}
\def\labelstyle{\scriptstyle}
\xymatrix @-1.5pc{
& & & & & & & & \\
& & & \wub \ar@{}[r]^{Q_0(\tilde{v}_j)}& & & & & \\
& & \bub \ar@{}[u]^{Q_0(\tilde{u}_i)}& 
\wub \ar@{}[r]^{\tilde{v}_j} & & & & & \\
& & \bub \ar@{}[d]^{\tilde{u}_i} & & & & & & \\
\dub \ar@{->}[rrrrr]_(0.9){(b)} & & & & & & & & \\
& \dub \ar@{->}[uuuuu]^(0.9){[a]} & & & & & & & \\
}
\hspace{1cm}
\def\objectstyle{\scriptstyle}
\def\labelstyle{\scriptstyle}
\xymatrix @-1.5pc{
& & & & & & & & \\
& & & \bub \ar@{}[r]^{Q_0(\tilde{u}^{\vee}_i)}& & & & & \\
& & \wub \ar@{}[u]^{Q_0(\tilde{v}^{\vee}_j)}& 
\bub \ar@{}[r]^{\tilde{u}^{\vee}_i} & & & & & \\
& & \wub \ar@{}[d]^{\tilde{v}^{\vee}_j} & & & & & & \\
\dub \ar@{->}[rrrrr]_(0.9){(b)} & & & & & & & & \\
& \dub \ar@{->}[uuuuu]^(0.9){[a]} & & & & & & & \\
}
$$
where each $\bullet$ represents an $s$-dimensional vector space over $\ff_p$ spanned by classes:
$\tilde{u}_i$ for $\ddeg=(1)[1]$, $Q_0(\tilde{u}_i)$ for
$\ddeg=(1)[2]$, $\tilde{u}^{\vee}_i$ for $\ddeg=(2)[2]$, and
$Q_0(\tilde{u}^{\vee}_i)$ for $\ddeg=(2)[3]$,
and each $\circ$ represents a $t$-dimensional vector space over $\ff_p$ spanned by classes:
$\tilde{v}^{\vee}_j$ for $\ddeg=(1)[1]$, $Q_0(\tilde{v}^{\vee}_j)$ for
$\ddeg=(1)[2]$, $\tilde{v}_j$ for $\ddeg=(2)[2]$, and
$Q_0(\tilde{v}_j)$ for $\ddeg=(2)[3]$. The pairing
$$
\HH^a_{et}(M,\zz/p)\times \HH^c_{et}(M^{\vee},\zz/p)\row \HH^{a+c-4}_{et}(k,\zz/p)
$$
is given by 
\begin{equation*}
 \begin{split}
 &\la \tilde{u}_i,Q_0(\tilde{u}^{\vee}_i)\ra=
\la Q_0(\tilde{u}_i), \tilde{u}^{\vee}_i\ra=1, 
\hspace{2mm}\text{for}\,1\leq i\leq s\\
 &\la\tilde{v}_j,Q_0(\tilde{v}^{\vee}_j)\ra=
-\la Q_0(\tilde{v}_j),\tilde{v}^{\vee}_j\ra=1,
\hspace{2mm}\text{for}\,1\leq j\leq t,
 \end{split}
\end{equation*}
with all other
combinations being zero.
By the Beilinson-Lichtenbaum ``conjecture'', $\tilde{u}_i$
and $\tilde{v}_j$ can be lifted to elements 
$u_i\in\hm^{1,1}(M,\zz/p)$ and $v_j\in\hm^{2,2}(M,\zz/p)$,
while $\tilde{v}^{\vee}_j$ and $\tilde{u}^{\vee}_i$ can be lifted to elements 
$v^{\vee}_j\in\hm^{1,1}(M^{\vee},\zz/p)$ and $u^{\vee}_i\in\hm^{2,2}(M^{\vee},\zz/p)$,
such that $\la u_i,Q_0(u^{\vee}_i)\ra=\la Q_0(u_i),u^{\vee}_i\ra=\la v_j,Q_0(v^{\vee}_j\ra=-\la Q_0(v_j),v^{\vee}_j\ra=\tau$
(zero otherwise), for the pairing
$$
\hm^{a,b}(M,\zz/p)\times\hm^{c,d}(M^{\vee},\zz/p)\row\hm^{a+c-4,b+d-2}(k,\zz/p).
$$
In other words, if we denote $\ct_A=\zz/p(1)[1]^{\oplus s}\oplus\zz/p(2)[2]^{\oplus t}$ and $\ct_B=\zz/p(1)[1]^{\oplus t}\oplus\zz/p(2)[2]^{\oplus s}$, then the compositions
\begin{equation}
\label{posled-t-m}
\ct_A(-1)\stackrel{(u_i^{\vee},v_j^{\vee})}{\lrow} M
\stackrel{(u_i,v_j)}{\lrow}\ct_A
\hspace{2cm}
\ct_B(-1)\stackrel{(v_j,u_i)}{\lrow} M^{\vee}
\stackrel{(v^{\vee}_j,u^{\vee}_i)}{\lrow}\ct_B
\end{equation}
coincide with the multiplication by $\tau$.
We can complete these to dual octahedra:
\begin{equation}
\label{oktaedry}
\begin{split}
\def\objectstyle{\scriptstyle}
\def\labelstyle{\scriptstyle}
\xymatrix @-0.0pc{
 &{\ct_A}(-1) \ar[dr]^{\tau} & \\
 M_{\bullet}(A) \ar[ru]^(0.4){[1]} \ar[r]^{[1]} & \ck_A \ar[u] \ar[d] 
 \ar@{}[ld]|-(0.22){\star} \ar@{}[ru]|-(0.22){\star}& 
 {\ct_A}\ar[dl]^{[1]} \ar[l]^{[1]}\\
 & M^{\bullet}(A) \ar[ul]^{\gamma_A} &
}\hspace{8mm}
\xymatrix @-0.0pc{
 &{\ct_A}(-1) \ar[d]^{f} \ar[dr]^{\tau} & \\
 M_{\bullet}(A) \ar[ru]^(0.4){[1]}  & M \ar[l] \ar[r]^{g} 
 \ar@{}[rd]|-(0.22){\star} \ar@{}[lu]|-(0.22){\star}& 
 {\ct_A}\ar[dl]^{[1]}\\
 & M^{\bullet}(A) \ar[ul]^{\gamma_A} \ar[u] &
}\\
\text{and}\\
\def\objectstyle{\scriptstyle}
\def\labelstyle{\scriptstyle}
\xymatrix @-0.0pc{
 &{\ct_B}(-1) \ar[dr]^{\tau} & \\
 M_{\bullet}(B) \ar[ru]^(0.4){[1]} \ar[r]^{[1]} & \ck_B \ar[u] \ar[d] 
 \ar@{}[ld]|-(0.22){\star} \ar@{}[ru]|-(0.22){\star}& 
 {\ct_B}\ar[dl]^{[1]} \ar[l]^{[1]}\\
 & M^{\bullet}(B) \ar[ul]^{\gamma_B} &
}\hspace{8mm}
\xymatrix @-0.0pc{
 &{\ct_B}(-1) \ar[d]^{g^{\vee}} \ar[dr]^{\tau} & \\
 M_{\bullet}(B) \ar[ru]^(0.4){[1]}  & M^{\vee} \ar[l] \ar[r]^{f^{\vee}} 
 \ar@{}[rd]|-(0.22){\star} \ar@{}[lu]|-(0.22){\star}& 
 {\ct_B}\ar[dl]^{[1]}\\
 & M^{\bullet}(B) \ar[ul]^{\gamma_B} \ar[u] &
},
\end{split}
\end{equation}
where $\ck_A=\km(1)^{\oplus s}\oplus\km(2)[1]^{\oplus t}$,
$\ck_B=\km(1)^{\oplus t}\oplus\km(2)[1]^{\oplus s}$ and
$\km=\op{Cone}(\zz/p(-1)\stackrel{\cdot\tau}{\row}\zz/p)$.
Note that $\km$ belongs to the heart of the homotopic $t$-structure \cite{Deg} and represents there the Rost
cycle module $k^M_*=K^M_*/p$. In particular, we get dual
to each other distinguished triangles in $\dmgmk$:
\begin{equation}
\label{triangle}
\xymatrix{
& \ck_A \ar[dr]^{\beta_A} \ar@{}[d]|-(0.55){\star} & \\
M_{\bullet}(A) \ar[ur]^(0.4){\alpha_A} 
\ar@{}[ur]^(0.7){[1]}
& & M^{\bullet}(A) \ar[ll]^{\gamma_A}
}\hspace{5mm}\text{and}\hspace{5mm}
\xymatrix{
& \ck_B \ar[dr]^{\beta_B} \ar@{}[d]|-(0.55){\star} & \\
M_{\bullet}(B) \ar[ur]^(0.4){\alpha_B} 
\ar@{}[ur]^(0.7){[1]}
& & M^{\bullet}(B) \ar[ll]^{\gamma_B}
}.
\end{equation}
We will use the standard notations 
$\moco{i}{j}(N,\zz):=\Hom_{DM(k)}(N,\zz(j)[i])$ for {\it motivic cohomology}, respectively, $\moho{i}{j}(N,\zz):=\Hom_{DM(k)}(\zz(j)[i],N)$ for {\it motivic homology}.

In view of (\ref{posled-t-m}), $M_{et}=(\zz/p)_{et}[1]^{\oplus s}\oplus(\zz/p)_{et}[2]^{\oplus t}$ and $f_{et}=id_{M_{et}}=g_{et}$. By the 
Beilinson-Lichtenbaum ``conjecture'', $g^*$ is an isomorphism
on motivic diagonals $\mocod{i}=\hm^{i+*,*}$ with $i\leq 1$,
and so, $\mocod{i}(M^{\bullet}(A),\zz)=0$, for $i\neq 2$, while
$\mocod{2}(M^{\bullet}(A),\zz)=\mocod{2}(M,\zz)$, which can be identified with $\mohod{0}(M^{\vee},\zz)$. By the same reason, $f^*$ is injective on diagonals $\mocod{\leq 2}$, and so, on all of them (as $\ddim(X)=2$) and $\mocod{i}(M_{\bullet}(A),\zz)=0$, for $i\neq 3$, while
$\mocod{3}(M_{\bullet}(A),\zz)$ can be identified with $H_B/\mohod{0}(M^{\vee},\zz)$, where
$H_B=\HH^*_{et}\la-1\ra^{\oplus t}\oplus \HH^*_{et}\la-2\ra^{\oplus s}$ and
$\HH^*_{et}=\HH^*_{et}(k,\zz/p)$. In particular, $\gamma_A^*=0$. 

The same considerations apply to the second triangle, $M^{\bullet}(B)$ and $M_{\bullet}(B)$. In particular,
$\mocod{2}(M^{\bullet}(B),\zz)$ can be identified with
$\mohod{0}(M,\zz)$ and $\mocod{3}(M_{\bullet}(B),\zz)$ - with
$H_A/\mohod{0}(M,\zz)$, where $H_A=\HH^*_{et}\la-1\ra^{\oplus s}\oplus \HH^*_{et}\la-2\ra^{\oplus t}$.
By duality, $(\gamma_{A,B})_*=0$ as well.
This shows that motivic homology of $M_{\bullet}(A)$ and
$M_{\bullet}(B)$ is concentrated on the $0$-th diagonal and so, these objects belong to the
heart of the homotopic $t$-structure. There it is represented by some Rost cycle submodules $A\subset H_A$ and
$B\subset H_B$. Since 
$A=\mohod{0}(M_{\bullet}(A),\zz)=\mohod{0}(M,\zz)$ and
$B=\mohod{0}(M_{\bullet}(B),\zz)=\mohod{0}(M^{\vee},\zz)$, we obtain that $\moho{i}{i}(M_{\bullet}(A),\zz)=0$ and 
$\moho{i}{i}(M_{\bullet}(B),\zz)=0$, for $i>0$, that is, $A$ and $B$ are concentrated in non-negative degrees (recall, that the generators of the modules $H_A$ and $H_B$ are in
degrees $-1$ and $-2$).

\medskip

Conversely, let 
$\ck_A=\km(1)^{\oplus s}\oplus\km(2)[1]^{\oplus t}$,
$\ck_B=\km(1)^{\oplus t}\oplus\km(2)[1]^{\oplus s}$,
and suppose we have
a pair of dual with respect to $\Homi(-,\zz(2)[4])$
distinguished triangles (\ref{triangle}) 
of geometric motives, where $\gamma^*_{A,B}$ is zero on
$\moco{*}{*'}$ ($\Leftrightarrow(\gamma_{A,B})_*=0$) and 
$\moho{i}{i}(M_{\bullet}(A),\zz)=0$, 
$\moho{i}{i}(M_{\bullet}(B),\zz)=0$, for $i>0$ . 

The (integral) motivic homology of $\ck_A$ is
a free module over $\HH^*_{et}$ of rank $(s+t)$ 
with generators $\ov{v}_j^{\vee}$ and $\ov{u}_i^{\vee}$ in degrees $(1)$ and $(2)[1]$ which
via $\Homi(\ck_A,\zz(2)[4])=\ck_B[1]$ can be identified with the 
motivic cohomology of $\ck_B$, which is a similar free module
of rank $(s+t)$ with generators $\ov{u}_i$ and $\ov{v}_j$ in degree $[2]$ and $(1)[3]$, where we identify
$U_i:=\ov{u}_i=\ov{u}_i^{\vee}$, $V_j:=\ov{v}_j=\ov{v}_j^{\vee}$. Similarly, motivic homology of $\ck_B$ can be identified with the motivic cohomology of $\ck_A$.
Since $\gamma^*_{A,B}=0$, 
for $M_{\bullet}=M_{\bullet}(A,B)$ and
$M^{\bullet}=M^{\bullet}(A,B)$ we obtain:
\begin{equation*}
\begin{split}
&\mocod{l}(M^{\bullet},\zz)=0,\hspace{2mm}\text{for}\hspace{2mm} l\neq 2,\hspace{3mm} 
\mocod{l}(M_{\bullet},\zz)=0,\hspace{2mm}\text{for}
\hspace{2mm}l\neq 3;\\
&\mohod{l}(M^{\bullet},\zz)=0,\hspace{2mm}\text{for} 
\hspace{2mm}l\neq -1,\hspace{3mm}
\mohod{l}(M_{\bullet},\zz)=0,\hspace{2mm}\text{for} 
\hspace{2mm}l\neq 0, 
\end{split}
\end{equation*}
and
via the above identification, 
$\mohod{0}(M_{\bullet}(A),\zz)=\mocod{2}(M^{\bullet}(B),\zz)$,
$\mocod{3}(M_{\bullet}(A),\zz)=\mohod{-1}(M^{\bullet}(B),\zz)$ (and similar with $A$ and $B$ swapped) which fit into the exact sequences
\begin{equation}
\label{ho-co-Mlow}
\begin{split}
&0\row\mohod{0}(M_{\bullet}(A),\zz)\stackrel{(\alpha_A)_*}{\lrow}
H_A\stackrel{\alpha_B^*}{\lrow}
\mocod{3}(M_{\bullet}(B),\zz)\row 0;\\
&0\row\mohod{0}(M_{\bullet}(B),\zz)\stackrel{(\alpha_B)_*}{\lrow}
H_B\stackrel{\alpha_A^*}{\lrow}
\mocod{3}(M_{\bullet}(A),\zz)\row 0.
\end{split}
\end{equation}
Thus, $M_{\bullet}(A)$ and $M_{\bullet}(B)$ belong to the heart of the homotopic $t$-structure and correspond to Rost cycle submodules $A\subset H_A$ and $B\subset H_B$. Moreover, the dual of these should
be given by the respective quotient cycle modules
(up to appropriate shift). 

Combining our distinguished triangles with the following ones 
$$
\ct_{A,B}(-1)\stackrel{\cdot\tau}\lrow\ct_{A,B}\lrow\ck_{A,B}[1]\lrow\ct_{A,B}(-1)[1], 
$$
where 
$\ct_A=\zz/p(1)[1]^{\oplus s}\oplus\zz/p(2)[2]^{\oplus t}$ and 
$\ct_B=\zz/p(1)[1]^{\oplus t}\oplus\zz/p(2)[2]^{\oplus s}$, we get a pair of dual w.r.to $\Homi(-,\zz(2)[4])$ octahedra
(\ref{oktaedry}) consisting of compact objects. 
In particular, motive $M$ appears.

By the very construction, $M$ 
comes from
$\dmkF{\zz/p}$ (since it is so for the morphism 
$M_{\bullet}(A)\row\ct_A(-1)$). 
In particular, it is $p$-torsion.
Since $\mocod{i}(\ct_A,\zz)=0$, for $i>1$ and
$\mocod{i}(M^{\bullet}(A),\zz)=0$, for $i\neq 2$, we obtain that $\mocod{i}(M,\zz)=0$, for $i>2$ and so,
the map $M\lrow M_{\bullet}(A)$ is
zero on cohomology. By the same reason, the map
$\ct\stackrel{[1]}{\lrow}M^{\bullet}$ is also trivial
on cohomology.
We have the pairing 
$\la -,-\ra$ 
$$
\moco{a}{b}(M,\zz/p)\otimes\moco{c}{d}(M^{\vee},\zz/p)\lrow
\moco{a+c-4}{b+d-2}(k,\zz/p)
$$
which is $\moco{*}{*'}(k,\zz/p)$-linear with
respect to the left action on the left factor
and the right action on the right one. It is also
compatible with the pairing
$$
\moco{a}{b}(\ct_A(-1),\zz/p)\otimes\moco{c}{d}(\ct_B,\zz/p)\lrow
\moco{a+c-4}{b+d-2}(k,\zz/p)
$$
in the sense that $\la f^*(x),y\ra=\la x,(f^{\vee})^*(y)\ra$,
for $x\in\moco{*}{*'}(M,\zz/p)$, $y\in\moco{*}{*'}(\ct_B,\zz/p)$. 
The map $f^{\vee}$ is given by $M^{\vee}\stackrel{(v^{\vee}_j,u^{\vee}_i)}{\lrow}
\ct_B$ and so, we get:
\begin{equation}
\label{spar}
\la u_i,Q_0(u^{\vee}_i)\ra=\la Q_0(u_i),u^{\vee}_i\ra=\la v_j,Q_0(v^{\vee}_j)\ra=-\la Q_0(v_j),v^{\vee}_j\ra=\tau,
\end{equation}
with all other combinations equal to zero.\\

Because $\moho{i}{i}(M_{\bullet}(A),\zz)=0$, for $i>0$,  we obtain that 
$\moho{i}{j}(M,\zz)=0$, for 
$i<j$, for $i<2j$ and for $i>2$. 
And the same is true about $M^{\vee}$. By duality,
$\moco{i}{j}(M,\zz)=0$ for $i-j>2$, for $i>2j$ and for $i<0$.
This holds over any extension $K/k$.

It follows from Proposition \ref{Chow-crit} that our motive $M$ is a Chow motive. Then Proposition \ref{N-r-eff} implies
that $M$ is a direct summand in the
motive $M(X)$ of a smooth projective (possibly reducible)
surface $X$.\\

Thus, we have proven the following result.

\begin{thm}
 \label{tor-sur-oto}
 The assignment $M\mapsto A=\mohod{0}(M,\zz),\,B=\mohod{0}(M^{\vee},\zz)$ defines a $1$-to-$1$ correspondence between isomorphism classes of
 \begin{itemize}
  \item[$(1)$] $p$-torsion direct summands in the motives of smooth projective surfaces (possibly, disconnected); \hspace{1cm} and
  \item[$(2)$] Pairs of Rost submodules $A\stackrel{(\alpha_A)_*}{\lrow} H_A$, $B\stackrel{(\alpha_B)_*}{\lrow} H_B$,
  where 
  $H_A=\HH^*_{et}\la-1\ra^{\oplus s}\oplus 
  \HH^*_{et}\la-2\ra^{\oplus t}$,
  $H_B=\HH^*_{et}\la-1\ra^{\oplus t}\oplus 
  \HH^*_{et}\la-2\ra^{\oplus s}$, for some $s$ and $t$, such that:
  \begin{itemize}
  \item[$(a)$] The respective objects $M_{\bullet}(A)$  and
  $M_{\bullet}(B)$ of the heart of the homotopic $t$-structure
  are compact.
  \item[$(b)$] $A^j=B^j=0$, for $j<0$;
  \item[$(c)$] The respective triangles $(\ref{triangle})$
  are dual 
  with respect to $\Homi(-,\zz(2)[4])$, namely, $\gamma_B=\gamma_A^{\vee}$
  (here $\ck_A=\km(1)^{\oplus s}\oplus\km(2)[1]^{\oplus t}$
  and $\ck_B=\km(1)^{\oplus t}\oplus\km(2)[1]^{\oplus s}$).
  \end{itemize} 
\end{itemize}
\end{thm}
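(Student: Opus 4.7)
The forward assignment $M\mapsto(A,B)$ is essentially what the discussion preceding the theorem has established. Given a $p$-torsion summand $M$, one uses Proposition \ref{sm-Met-kbar} together with the Bockstein exactness (forced by $p\cdot id_M=0$) to pin down the shape of $\HH^*_{et}(M,\zz/p)$ and $\HH^*_{et}(M^{\vee},\zz/p)$, lifts the classes $\tilde u_i,\tilde v_j$ and their duals via Beilinson--Lichtenbaum to $u_i,v_j,u_i^{\vee},v_j^{\vee}$ with the pairings (\ref{spar}), and then completes the arrows $(u_i,v_j)$ and $(v^{\vee}_j,u^{\vee}_i)$ to the octahedra (\ref{oktaedry}). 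The triangles (\ref{triangle}) are extracted, and then $A=\mohod{0}(M_{\bullet}(A),\zz)\hookrightarrow H_A$, $B=\mohod{0}(M_{\bullet}(B),\zz)\hookrightarrow H_B$ are read off from the long exact sequences (\ref{ho-co-Mlow}). Compactness of $M_{\bullet}(A),M_{\bullet}(B)$ (condition (a)) holds because they fit into distinguished triangles of geometric motives; vanishing in negative degrees (condition (b)) follows from the motivic homology computation; and duality $\gamma_B=\gamma_A^{\vee}$ (condition (c)) is built into the construction via $\Homi(-,\zz(2)[4])$-duality of the two octahedra.

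For the backward direction, start with a pair $(A,B)$ satisfying (a)--(c). The modules $A\subset H_A$ and $B\subset H_B$ define, via the equivalence between the heart of the homotopy $t$-structure and the category of Rost cycle modules \cite{Deg}, objects $M_{\bullet}(A),M_{\bullet}(B)\in\dmgmk$. The inclusions $(\alpha_A)_*,(\alpha_B)_*$ give the maps $\alpha_A,\alpha_B$ into $\ck_A,\ck_B$, and together with $\gamma_A$ (and $\gamma_B=\gamma_A^{\vee}$) one assembles the triangles (\ref{triangle}). Splicing with the Bockstein triangles $\ct_{A,B}(-1)\xrightarrow{\cdot\tau}\ct_{A,B}\to\ck_{A,B}[1]$ produces the octahedra (\ref{oktaedry}), from which $M$ is extracted as the third vertex. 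The computations recorded after (\ref{oktaedry}) show that $\moco{i}{j}(M,\zz)=0$ outside $0\leq i-j\leq 2$, $i\leq 2j$, $i\geq 0$, over any extension $K/k$.

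The principal obstacle is to conclude that this constructed $M$ is not just an abstract object of $\dmgmk$ but actually a Chow motive, and in fact a direct summand of $M(X)$ for some smooth projective surface $X$. Here the plan is to invoke Proposition \ref{Chow-crit} (which gives a vanishing criterion on $\moco{*}{*'}$ for an object to be Chow) to handle the first step, then Proposition \ref{N-r-eff} (which realises a Chow motive with motivic cohomology supported in a bounded range as a summand of a surface) for the second. The vanishing bounds above are tailored precisely to feed into these two results. Self-duality with respect to $\Homi(-,\zz(2)[4])$ and $p$-torsion are automatic from the construction, since $M$ is built out of objects in $\dmkF{\zz/p}$ and the octahedra are dual to each other.

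Finally, to show the two assignments are mutually inverse it suffices to observe that, starting from $M$, the triangles (\ref{triangle}) are reconstructed from the pair $(A,B)$ up to unique isomorphism: $M_{\bullet}(A)$ and $M_{\bullet}(B)$ are determined by $A,B$ via the $t$-structure, the maps $\alpha_A,\alpha_B$ by the inclusions $(\alpha_A)_*,(\alpha_B)_*$, and $\gamma_A$ by condition (c) and the dual octahedron. The octahedral axiom then determines $M$ up to isomorphism as the third vertex. Conversely, starting from $(A,B)$, the pair extracted from the constructed $M$ is tautologically $(A,B)$ by the identifications $A=\mohod{0}(M_{\bullet}(A),\zz)$ and $B=\mohod{0}(M_{\bullet}(B),\zz)$ built into the construction. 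This gives the bijection.
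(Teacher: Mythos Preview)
Your proposal is correct and follows the paper's own argument essentially step for step: the forward direction via Proposition~\ref{sm-Met-kbar}, Bockstein exactness, Beilinson--Lichtenbaum lifts, and the octahedra~(\ref{oktaedry}); the backward direction by reconstructing~$M$ from the triangles~(\ref{triangle}) and then appealing to Propositions~\ref{Chow-crit} and~\ref{N-r-eff} to land in the motive of a surface. Your added paragraph making the mutual-inverse property explicit is a useful clarification the paper leaves implicit; one small slip is the phrase ``self-duality'' --- $M$ need not be self-dual, only the two octahedra are dual to one another --- and your cohomology-only statement of the vanishing range should also record the companion homology vanishing $\moho{i}{j}(M_K,\zz)=0$ for $i<2j$, since Proposition~\ref{Chow-crit} requires both.
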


\begin{rem}
 \label{AB-rem}
 Of course, in the above Theorem, the Rost submodule $B$ is determined by $A$, and vice-versa. So, the result can be formulated in terms of $A$ only. But then the conditions will be less natural. The motive $M$ will be self-dual exactly when $s=t$ and $A=B$. 
 \Red
\end{rem}

Furthermore, such a pair $(A,B)$ of Rost submodules enjoys various additional properties.

\begin{prop}
 \label{ort-cond-A-gen}
 For any field extension $F/k$ and any sections $(\mu^i_a,\lambda^j_a)$ and $(\mu^j_b,\lambda^i_b)$ of
 $A(F)$ and $B(F)$, respectively, the following orthogonality relation holds:
  \begin{equation}
 \label{ortog-p}
 \sum_j(-1)^{\ddeg(\lambda_a)\ddeg(\lambda_b)}\mu^j_b\lambda^j_a+\sum_i\mu^i_a\lambda^i_b=0\in H^*_{et}(F,\zz/p).
\end{equation}
\end{prop}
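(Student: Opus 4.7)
The approach generalises the proof of Proposition \ref{orthog-A}. The key tool is the $\hm^{*,*'}(F,\zz/p)$-bilinear trace pairing
\[
\la -,-\ra \colon \hm^{*,*'}(M_F,\zz/p) \otimes \hm^{*,*'}(M^\vee_F,\zz/p) \lrow \hm^{*,*'}(F,\zz/p),
\]
whose non-trivial values on the distinguished generators are recorded in (\ref{spar}).

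First, I would construct two auxiliary classes that package the given sections. Set
\[
\alpha' := \sum_i u_i \cdot \mu^i_a + \sum_j v_j \cdot \lambda^j_a \in \mocod{0}(M_F,\zz/p)
\]
and
\[
\beta' := \sum_i Q_0(u^\vee_i) \cdot \lambda^i_b + \sum_j Q_0(v^\vee_j) \cdot \mu^j_b \in \mocod{1}(M^\vee_F,\zz/p).
\]
The analogues for $M$ and $M^\vee$ of the diagonal descriptions worked out in the Godeaux case (e.g.\ $\mocod{1}(M_F,\zz/p) \supset \tau^{-1}(u_i,v_j)\cdot A$ and $\mocod{2}(M^\vee_F,\zz/p) = \tau^{-1}Q_0(v^\vee_j,u^\vee_i)\cdot B$), which follow from the distinguished triangles (\ref{triangle}), combined with the very definition of $A$ and $B$, show that $\alpha'$ and $\beta'$ are both divisible by $\tau$; set $\alpha := \tau^{-1}\alpha' \in \mocod{1}(M_F,\zz/p)$ and $\beta := \tau^{-1}\beta' \in \mocod{2}(M^\vee_F,\zz/p)$.

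The central vanishing is dimensional. Since $M$ and $M^\vee$ are both direct summands of the motive of a smooth projective surface $X$, the classes $\alpha,\beta$ may be viewed inside $\moco{*}{*'}(X_F,\zz/p)$; the cup product $\alpha\cdot\beta$ then sits on the third diagonal, which vanishes as $\ddim(X)=2$. Therefore $\alpha'\beta' = \tau^2\alpha\beta = 0$, and consequently $\la\alpha',\beta'\ra = \pi_*(\alpha'\beta') = 0$. Expanding the pairing bilinearly, by (\ref{spar}) only the matched terms with $\la u_i,Q_0(u^\vee_{i'})\ra = \delta_{ii'}\tau$ and $\la v_j,Q_0(v^\vee_{j'})\ra = \delta_{jj'}\tau$ survive, producing $\tau$ times the combination appearing in (\ref{ortog-p}). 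Since multiplication by $\tau$ is injective on $\hm^{*,*'}(F,\zz/p)$ in the relevant bidegree by Beilinson-Lichtenbaum, (\ref{ortog-p}) follows.

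The main obstacle will be the careful sign bookkeeping: the graded-commutativity sign picked up when reordering the scalar factors past the cohomology classes (whose $Q_0$-images have odd cohomological parity for odd $p$) must combine with the asymmetric sign $\la Q_0(v_j),v^\vee_j\ra = -\tau$ to produce exactly the factor $(-1)^{\ddeg(\lambda_a)\ddeg(\lambda_b)}$ attached to the $j$-sum in (\ref{ortog-p}). The auxiliary description of the diagonals of $M^\vee$ parallel to the Godeaux picture must also be verified in this multi-generator setting before the argument applies.
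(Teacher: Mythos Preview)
Your overall strategy---lift the sections to classes in motivic cohomology via the distinguished generators, use a dimensional vanishing, then read off the orthogonality relation from (\ref{spar}) after dividing out $\tau$---is exactly the paper's approach. However, you have placed the sections on the wrong sides of the $M/M^{\vee}$ duality, and this is not a harmless choice.

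By construction, $A=\mohod{0}(M)=\mocod{2}(M^{\vee})$, while $\mocod{2}(M)=\mohod{0}(M^{\vee})=B$. Consequently a section of $A(F)$ naturally produces a $\tau$-divisible class on $M^{\vee}$ (via $v^{\vee}_j,u^{\vee}_i$), and a section of $B(F)$ a $\tau$-divisible class on $M$ (via $u_i,v_j$)---precisely the opposite of what you wrote. A quick degree check exposes the problem: for $(\mu^i_a,\lambda^j_a)\in A^n(F)$ one has $\mu^i_a\in\HH^{n+1}_{et}(F)$ and $\lambda^j_a\in\HH^{n+2}_{et}(F)$, so your $\alpha'$ mixes a term $u_i\mu^i_a$ of bidegree $(n+2,n+2)$ with a term $v_j\lambda^j_a$ of bidegree $(n+4,n+4)$; it is not a homogeneous element, and there is no reason for it to be $\tau$-divisible. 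The paper instead takes
\[
\tilde{a}=\tau^{-1}\Big(\sum_j v^{\vee}_j\lambda^j_a+\sum_i u^{\vee}_i\mu^i_a\Big)\in\mocod{1}(M^{\vee}_F,\zz/p),\qquad
Q_0(\tilde{b})=\tau^{-1}\Big(\sum_i Q_0(u_i)\lambda^i_b+\sum_j Q_0(v_j)\mu^j_b\Big)\in\mocod{2}(M_F,\zz/p),
\]
both of which are homogeneous and genuinely $\tau$-divisible because $\mocod{2}(M^{\vee}_F,\zz)\cong\mocod{2}(M^{\bullet}(B)_F,\zz)=A(F)$ and $\mocod{2}(M_F,\zz)\cong\mocod{2}(M^{\bullet}(A)_F,\zz)=B(F)$. (In the Godeaux case this distinction was invisible because $M$ was self-dual and $A=B$.)

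Once the swap is made, your argument goes through. A minor simplification: the paper does not pass to $X$ and invoke vanishing of the third diagonal; it simply notes that the pairing $\la Q_0(\tilde{b}),\tilde{a}\ra$ lands in $\mocod{1}(F,\zz/p)$, which is zero for a field. This is the same vanishing, packaged more intrinsically.
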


\begin{proof}
Because $M$ comes from $\dmkF{\zz/p}$,
the Bockstein $Q_0$ acts as an exact differential on
$\moco{*}{*'}(M^{\vee},\zz/p)$. 
Since $\mocod{2}(M_F,\zz)\row
\mocod{2}(M^{\bullet}(A)_F,\zz)$ 
and $\mocod{2}(M^{\vee}_F,\zz)\row\mocod{2}(M^{\bullet}(B)_F,\zz)$
are isomorphisms, 
$(\mu^i_a,\lambda^j_a)\in A(F)$ corresponds to
$\tilde{a}=\tau^{-1}(\sum_j v^{\vee}_j\cdot\lambda^j_a+
\sum_i u^{\vee}_i\cdot\mu^i_a)\in\mocod{1}(M^{\vee}_F,\zz/p)$ and
$(\mu^j_b,\lambda^i_b)\in B(F)$ corresponds to
$Q_0(\tilde{b})=\tau^{-1}(\sum_i Q_0(u_i)\cdot\lambda^i_b+\sum_j 
Q_0(v_j)\cdot\mu^j_b)\in
\mocod{2}(M_F,\zz/p)$.  
Since $\la Q_0(\tilde{b}),\tilde{a}\ra\in
\mocod{1}(F,\zz/p)=0$, the pairing $\la
\tau Q_0(\tilde{b}),\tau \tilde{a}\ra$ is also zero. Using 
(\ref{spar}), we obtain the orthogonality relation.
 \Qed
\end{proof}

Let $M=(X,\rho)$ and $X=\coprod_r X_r$ be the decomposition of $X$ into a disjoint union of its connected components.
Since the $E_1$-term of the $tau$-spectral sequence
coincides with the $E_2$-term of the $coniveau$-spectral sequence, we have an embedding 
\begin{equation}
\label{vlozh-nr}
\displaystyle\frac{\mocod{0}(M_F,\zz/p)}
{\tau\cdot\mocod{1}(M_F,\zz/p)}\subset
\oplus_r \HH^*_{nr}(F(X_r)/F,\zz/p)
\end{equation}
of $\HH^*_{et}(F)$-modules, for any $F/k$.

As was discussed above, 
$M_{et}=(\zz/p)_{et}[1]^{\oplus s}\oplus(\zz/p)_{et}[2]^{\oplus t}$ and for the modulo $p$ version $\ov{M}_{et}$ of it, we have:
$$
(\overline{\Delta}_M)_{et}=\sum_i[-(u_i)_{et}\times Q_0((u^{\vee}_i)_{et})+
Q_0((u_i)_{et})\times (u^{\vee}_i)_{et}]+\sum_j[(v_j)_{et}\times Q_0((v^{\vee}_j)_{et})+
Q_0((v_j)_{et})\times (v^{\vee}_j)_{et}]
$$
and so,
$$
(\Delta_M)_{et}=\delta\left(\sum_i(u_i)_{et}\times (u^{\vee}_i)_{et}+\sum_j(v_j)_{et}\times (v^{\vee}_j)_{et}\right).
$$
Since $Q_0(u^{\vee}_i)$ and $Q_0(v^{\vee}_j)$ are contained in
$\mocod{1}(X,\zz/p)$, these have a positive co-dimension
of support and so,
$$
(\ov{\Delta}_M|_{k(X_r)})_{et}=\sum_iQ_0((u_i)_{et})\cdot (u^{\vee}_i)_{k(X_r)}+\sum_j
Q_0((v_j)_{et})\cdot (v^{\vee}_j)_{k(X_r)}.
$$
Since the map $M\row M_{\bullet}(A)$ is zero on cohomology,
the multiplication by $\tau$ is injective on $\moco{*}{*'}(M_E,\zz/p)$, for any field extension $E/k$ (this implies,
in particular, that the $tau$-spectral sequence for $M$
degenerates on the $E_1$-page). From the injectivity of the
map $\mocod{2}(M,\zz/p)\stackrel{\cdot\tau}{\lrow}
\mocod{1}(M,\zz/p)$ we obtain that
\begin{equation}
\label{gen-pt-M}
\Delta_M|_{k(X_r)}=\delta(\tau^{-1}\left(\sum_i u_i\cdot (u^{\vee}_i)_{k(X_r)}+\sum_j v_j\cdot (v^{\vee}_j)_{k(X_r)}\right)).
\end{equation}
Since $\Delta_M|_{k(X_r)}\in\mocod{2}(M_{k(X_r)},\zz)$, we 
obtain that $((v^{\vee}_j)_{k(X_r)},(u^{\vee}_i)_{k(X_r)})\in B(k(X_r))$.
In the same way, $((u_i)_{k(X_r)},(v_j)_{k(X_r)})\in A(k(X_r))$.
Moreover, in the light of the embedding (\ref{vlozh-nr}),
we obtain:

\begin{prop}
 \label{samo-ort-gen}
For any extension $K/k$, 
$$
(\mu^i_a,\lambda^j_a)\in
A(K)\Leftrightarrow\sum_j(v^{\vee}_j)_{K(X_r)}\cdot\lambda^j_a+\sum_i(u^{\vee}_i)_{K(X_r)}\cdot\mu^i_a=0\in \HH^*_{nr}(K(X_r)/K,\zz/p),
\forall r.
$$
In other words, $A(K)$ consists exactly of those elements
of $H_A(K)$ which are orthogonal
to $((v^{\vee}_j)_{K(X_r)},(u^{\vee}_i)_{K(X_r)})$ in $H_A(K(X_r))$, for all $r$, in terms of the relations
$(\ref{ortog-p})$. Similarly,
$B(K)$ consists exactly of those elements of $H_B(K)$ which
are orthogonal to $((u_i)_{k(X_r)},(v_j)_{k(X_r)})$ in $H_B(K(X_r))$, for all $r$.
\end{prop}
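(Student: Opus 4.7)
The plan is to prove both implications separately: the forward one follows directly from Proposition \ref{ort-cond-A-gen}, while the reverse relies on the injection $(\ref{vlozh-nr})$ combined with an intrinsic description of $A(K)$ as a kernel via $\tau$-divisibility. The corresponding statement for $B$ follows by running the identical argument with $M$ and $M^{\vee}$ interchanged (and using that $((u_i)_{k(X_r)},(v_j)_{k(X_r)})\in A(k(X_r))$ as already noted).

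For $(\Rightarrow)$: Given $(\mu^i_a,\lambda^j_a)\in A(K)$, I would restrict scalars to $K(X_r)$. The discussion preceding the proposition establishes that $((v^{\vee}_j)_{K(X_r)},(u^{\vee}_i)_{K(X_r)})\in B(K(X_r))$, so Proposition \ref{ort-cond-A-gen} applied over $K(X_r)$ yields $\sum_j(v^{\vee}_j)_{K(X_r)}\lambda^j_a+\sum_i(u^{\vee}_i)_{K(X_r)}\mu^i_a=0$ in $\HH^*_{et}(K(X_r),\zz/p)$. Since the scalars $\lambda^j_a,\mu^i_a$ are defined over $K$ and the classes $(v^{\vee}_j)_{K(X_r)},(u^{\vee}_i)_{K(X_r)}$ arise by restriction from globally defined cohomology classes on the smooth variety $X_r$ to its generic point, every summand lies in the image of $\HH^*_{nr}(K(X_r)/K,\zz/p)\hookrightarrow\HH^*_{et}(K(X_r),\zz/p)$, and so the identity descends to unramified cohomology.

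For $(\Leftarrow)$: To $(\mu^i_a,\lambda^j_a)\in H_A(K)$ I would attach the class $\alpha=\sum_j v^{\vee}_j\cdot\lambda^j_a+\sum_i u^{\vee}_i\cdot\mu^i_a\in\mocod{0}(M^{\vee}_K,\zz/p)$. Using the duality identification $A=\mocod{2}(M^{\vee},\zz)$, the short exact sequences $(\ref{ho-co-Mlow})$, and the injectivity of multiplication by $\tau$ on $\mocod{\geq 1}(M^{\vee}_K,\zz/p)$ (which follows from the fact that $M\row M_{\bullet}(A)$ vanishes on cohomology), the condition $(\mu^i_a,\lambda^j_a)\in A(K)$ is equivalent to $\alpha$ being $\tau$-divisible, i.e.\ to $\alpha\equiv 0$ in $\mocod{0}(M^{\vee}_K,\zz/p)/\tau\cdot\mocod{1}(M^{\vee}_K,\zz/p)$. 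The embedding $(\ref{vlozh-nr})$ applied to $M^{\vee}$ injects this quotient into $\oplus_r\HH^*_{nr}(K(X_r)/K,\zz/p)$, and a direct computation shows that the image of $\alpha$ under this injection is precisely the tuple $\left(\sum_j(v^{\vee}_j)_{K(X_r)}\lambda^j_a+\sum_i(u^{\vee}_i)_{K(X_r)}\mu^i_a\right)_r$, since $v^{\vee}_j,u^{\vee}_i$ restrict to their named generic-point classes and the scalars $\lambda^j_a,\mu^i_a$ are $K$-rational. Vanishing of the orthogonality tuple for all $r$ thus forces $(\mu^i_a,\lambda^j_a)\in A(K)$.

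The main technical obstacle will be the last computational step: identifying the image of $\alpha$ under the embedding $(\ref{vlozh-nr})$ with the orthogonality tuple. This amounts to tracing through the coniveau/$\tau$-spectral sequence for $M^{\vee}$ and verifying that the natural map $\mocod{0}(M^{\vee}_K,\zz/p)\row\oplus_r\HH^*_{nr}(K(X_r)/K,\zz/p)$ sends the generators $v^{\vee}_j,u^{\vee}_i$ to their restrictions at the generic points of the components $X_r$; this relies on the original construction of these generators as lifts of the corresponding unramified classes on each $X_r$.
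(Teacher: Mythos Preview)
Your proposal is correct and follows the paper's own reasoning closely. The paper's proof is the single clause ``in the light of the embedding (\ref{vlozh-nr}), we obtain'', so the embedding already gives \emph{both} implications at once: $(\mu^i_a,\lambda^j_a)\in A(K)$ is equivalent to $\tau$-divisibility of $\sum_j v^{\vee}_j\lambda^j_a+\sum_i u^{\vee}_i\mu^i_a$ in $\mocod{0}(M^{\vee}_K,\zz/p)$, which by (\ref{vlozh-nr}) for $M^{\vee}$ is equivalent to vanishing of all the restrictions in unramified cohomology. Your separate treatment of $(\Rightarrow)$ via Proposition~\ref{ort-cond-A-gen} is a valid detour but unnecessary---the embedding is an injection, so it handles that direction as well.
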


\begin{cor}
 \label{max-self-ort-gen}
 $A=B^{\perp}$ and $B=A^{\perp}$ as Rost cycle modules, in terms of the relation $(\ref{ortog-p})$.
\end{cor}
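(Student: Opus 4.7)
The plan is to derive this directly by combining Propositions \ref{ort-cond-A-gen} and \ref{samo-ort-gen}. The forward inclusions $A\subseteq B^{\perp}$ and $B\subseteq A^{\perp}$ are immediate from Proposition \ref{ort-cond-A-gen}, which says precisely that arbitrary sections of $A$ and $B$ over any common field extension satisfy the orthogonality relation (\ref{ortog-p}).

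For the reverse inclusion $B^{\perp}\subseteq A$, I would argue as follows. Take a section $x$ of $B^{\perp}$ over some extension $K/k$. Cycle-module stability under base change lets me consider the restriction $x_{K(X_r)}$ for each connected component $X_r$ of $X$; this restriction still lies in $B^{\perp}(K(X_r))$. Now invoke the generic section $((v^{\vee}_j)_{K(X_r)},(u^{\vee}_i)_{K(X_r)})$, which was identified as an element of $B(K(X_r))$ in the discussion preceding Proposition \ref{samo-ort-gen} (via the explicit formula (\ref{gen-pt-M}) for the diagonal restricted to the generic point). The orthogonality of $x_{K(X_r)}$ with this particular section of $B(K(X_r))$ is then exactly the condition appearing in the characterization of $A(K)$ given by Proposition \ref{samo-ort-gen}, so $x\in A(K)$. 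The argument for $A^{\perp}\subseteq B$ is entirely symmetric, interchanging the roles of the $(u_i, v_j)$ and $(v^{\vee}_j, u^{\vee}_i)$.

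I do not expect any genuine obstacle here: the statement is the cleanest formal consequence of the two preceding propositions, with the generic sections of $A$ and $B$ playing the role of ``universal'' test elements that detect membership. The only subtlety worth flagging is interpretational — one must read $B^{\perp}$ in the cycle-module sense, i.e.\ a section of $B^{\perp}$ over $K$ must be orthogonal to sections of $B$ over \emph{arbitrary} extensions of $K$ (and in particular over $K(X_r)$), not merely over $K$ itself. With that convention the generic-section test is immediately available and no additional work is required.
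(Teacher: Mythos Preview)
Your proposal is correct and matches the paper's intended argument: the corollary is stated without proof immediately after Proposition \ref{samo-ort-gen}, and your derivation---forward inclusions from Proposition \ref{ort-cond-A-gen}, reverse inclusions by testing against the generic sections $((v^{\vee}_j)_{K(X_r)},(u^{\vee}_i)_{K(X_r)})\in B(K(X_r))$ and $((u_i)_{K(X_r)},(v_j)_{K(X_r)})\in A(K(X_r))$ via Proposition \ref{samo-ort-gen}---is exactly the implied reasoning. Your remark on the cycle-module reading of $B^{\perp}$ (orthogonality persisting over all extensions, so that the generic test sections are available) is the right way to make the deduction precise.
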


By \cite[Lemma 4.11]{Voe03} and \cite{RCM}, 
 $\mocod{2}(X,\zz)$ coincides with the Rost cycle 
 module of ``Chow groups with coefficients'' in Milnor's K-theory $\HH^2(X,\un{K}^M_*)$. Thus,
 $\mocod{2}(M_K,\zz)$ is additively generated by 
 elements of the form $\op{Tr}_{E/K}(\rho_M([q]\cdot\alpha))$,
 where $E/K$ is some finite extension, $q\in X(E)$,
 $\alpha\in K^M_*(E)$ and $\rho_M$ is the 
 projector defining $M$. Since $q$ is a specialization of 
 one of the
 generic points $\op{Spec}(E(X_r))$ of $X_E$, we get from 
 (\ref{gen-pt-M}) that
 $$
 \rho_M([q]\cdot\alpha)=\rho_M([q])\cdot\alpha=
 \delta(\tau^{-1}\Big(\sum_i u_i\cdot (u^{\vee}_i)_{E(X_r)}+\sum_j v_j\cdot (v^{\vee}_j)_{E(X_r)}\Big))(q)\cdot\alpha.
 $$
 Since $\mocod{2}(M,\zz)$ can be identified with the Rost cycle module $B$, we obtain:
 
\begin{prop}
 \label{generators-A}
 The Rost cycle module $B$
 is generated by elements 
 $((u^{\vee}_i)_{k(X_r)},(v^{\vee}_j)_{k(X_r)})\in B(k(X_r))$, numbered by
 the connected components of $X$. Similarly,
 the Rost cycle module $A$ is generated by elements
 $((u_i)_{k(X_r)},(v_j)_{k(X_r)})\in A(k(X_r))$, numbered by
 the same components.
\end{prop}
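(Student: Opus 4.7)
The plan is to assemble the ingredients already laid out in the paragraph preceding the statement and make the identifications explicit. First, I recall that the exact sequence (\ref{ho-co-Mlow}) combined with the triangles (\ref{triangle}) gives $\mohod{0}(M^{\vee},\zz) \cong \mocod{2}(M,\zz)$, and this identifies $B$ with the submodule $\mocod{2}(M,\zz) \subset \mocod{2}(X,\zz)$ cut out by the projector $\rho_M$. The basis of $H_B$ used to describe sections of $B$ as tuples $(\mu^j, \lambda^i)$ is, under this identification, precisely the image of the elements $v_j^{\vee}, u_i^{\vee}$ (up to the $\tau$-shift built into the definition of motivic diagonals). So the task reduces to exhibiting generators of $\mocod{2}(M_K,\zz)$ as a Rost cycle submodule and then reading them off in the $(v_j^{\vee}, u_i^{\vee})$-basis.

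Next I invoke the two ingredients from just before the statement: by \cite[Lemma 4.11]{Voe03} together with \cite{RCM}, $\mocod{2}(X,\zz)$ coincides with the Rost cycle module $\HH^2(X,\underline{K}^M_*)$, which, as a cycle module, is generated by elements of the form $\op{Tr}_{E/K}([q]\cdot\alpha)$ with $E/K$ finite, $q \in X(E)$ a closed point, and $\alpha \in K^M_*(E)$. Applying the projector, $\mocod{2}(M_K,\zz) = \rho_M \mocod{2}(X_K,\zz)$ is generated by elements $\op{Tr}_{E/K}\bigl(\rho_M([q])\cdot\alpha\bigr)$, so, as a cycle module, $B$ is generated by the elements $\rho_M([q])$ for $q$ running over closed points of $X$.

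Now I use the key reduction: every closed point $q$ of some $X_r$ is a specialization of the generic point $\eta_r = \op{Spec}(k(X_r))$, and cycle module structure respects specialization, so it suffices to show that the sections $\rho_M(\eta_r) \in B(k(X_r))$ are precisely $((u^{\vee}_i)_{k(X_r)},(v^{\vee}_j)_{k(X_r)})$. But $\rho_M(\eta_r) = \Delta_M|_{k(X_r)}$, and formula (\ref{gen-pt-M}) computes
\[
\Delta_M|_{k(X_r)} \;=\; \delta\Bigl(\tau^{-1}\bigl(\textstyle\sum_i u_i\cdot (u^{\vee}_i)_{k(X_r)} + \sum_j v_j\cdot (v^{\vee}_j)_{k(X_r)}\bigr)\Bigr).
\]
Under the identification of $\mocod{2}(M,\zz)$ with $B \subset H_B = \HH^*_{et}\la-1\ra^{\oplus t} \oplus \HH^*_{et}\la -2\ra^{\oplus s}$ (whose free generators are indexed by the $v^{\vee}_j$ and $u^{\vee}_i$), this expression is exactly the tuple of coefficients $((u^{\vee}_i)_{k(X_r)},(v^{\vee}_j)_{k(X_r)})$, as claimed.

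The statement for $A$ is obtained by applying the identical argument to $M^{\vee}$ in place of $M$: the roles of $(s,t)$ and of $(u_i, v_j)$ versus $(u_i^{\vee}, v_j^{\vee})$ are swapped by the duality $\Homi(-,\zz(2)[4])$, which interchanges $A$ and $B$, so the analogous computation for $\Delta_{M^{\vee}}|_{k(X_r)}$ yields that $A$ is generated by $((u_i)_{k(X_r)}, (v_j)_{k(X_r)})$. The only step requiring care is the bookkeeping between the two bases of $H_A$ and $H_B$ (in particular the $\la -1\ra$ versus $\la -2\ra$ shifts), which is forced by the degrees of $u_i, v_j, u_i^{\vee}, v_j^{\vee}$ recorded in (\ref{spar}); once these are matched the argument is essentially mechanical and I expect no further obstacle.
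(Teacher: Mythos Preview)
Your proposal is correct and follows essentially the same approach as the paper: identify $B$ with $\mocod{2}(M,\zz)$, use \cite[Lemma~4.11]{Voe03} and \cite{RCM} to realise the latter as $\HH^2(X,\un{K}^M_*)$ with generators $\op{Tr}_{E/K}(\rho_M([q])\cdot\alpha)$, reduce to generic points by specialisation, and then read off the tuple via formula~(\ref{gen-pt-M}). The paper's proof is precisely the paragraph immediately preceding the statement, and your write-up is simply a more expanded version of that paragraph with the identifications spelled out.
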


\begin{rem}
 \label{sim-gen-one}
 In the light of Corollary \ref{simple-M-con}, for an irreducible $p$-torsion motive $M$, the respective Rost
 cycle modules $A$ and $B$ are principal (generated by a single generator).
 \Red
\end{rem}

For a smooth projective curve $C/k$ having $m$ connected components, $M(C)=\zz^{\oplus m}\oplus \ov{M}(C)\oplus
\zz(1)[2]^{\oplus m}$, where $\ov{M}(C)$ is the {\it reduced motive} of $C$ (recall that our field $k$ is algebraically closed and so, every variety has a $k$-rational point).

Using arguments completely parallel to the proof of Theorem \ref{tor-sur-oto}, one gets:

\begin{thm}
 \label{curv}
 The assignment $N\mapsto C=\mohod{0}(N,\zz/p),\,D=\mohod{0}(N^{\vee},\zz/p)$ defines a $1$-to-$1$ correspondence between isomorphism classes of
 \begin{itemize}
  \item[$(1)$] Direct summands in the reduced motives of smooth projective (possibly, disconnected) curves in $Chow(k,\zz/p)$; \hspace{1cm} and
  \item[$(2)$] Pairs of Rost submodules 
  $C\stackrel{(\alpha_C)_*}{\lrow} H_C$, 
  $D\stackrel{(\alpha_D)_*}{\lrow} H_D$,
  where 
  $H_C=H_D=\HH^*_{et}\la-1\ra^{\oplus r}$, for some $r$, 
  such that:
  \begin{itemize}
  \item[$(a)$] The respective objects $N_{\bullet}(C)$  and
  $N_{\bullet}(D)$ of the heart of the homotopic $t$-structure (on $\dmkF{\zz/p}$)
  are compact.
  \item[$(b)$] $C^j=D^j=0$, for $j<0$;
  \item[$(c)$] The respective triangles 
  (with $\ck_C=\ck_D=k_M(1)^{\oplus r}$)
  \begin{equation*}
\xymatrix{
& \ck_C \ar[dr]^{\beta_C} \ar@{}[d]|-(0.55){\star} & \\
N_{\bullet}(C) \ar[ur]^(0.4){\alpha_C} 
\ar@{}[ur]^(0.7){[1]}
& & N^{\bullet}(C) \ar[ll]^{\gamma_C}
}\hspace{5mm}\text{and}\hspace{5mm}
\xymatrix{
& \ck_D \ar[dr]^{\beta_D} \ar@{}[d]|-(0.55){\star} & \\
N_{\bullet}(D) \ar[ur]^(0.4){\alpha_D} 
\ar@{}[ur]^(0.7){[1]}
& & N^{\bullet}(D) \ar[ll]^{\gamma_D}
}.
\end{equation*}
  are dual w.r. to $\Homi_{\dmkF{\zz/p}}(-,\zz/p(1)[2])$,
  namely, $\gamma_D=\gamma_C^{\vee}$.
  \end{itemize}
  \end{itemize}
\end{thm}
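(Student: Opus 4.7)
The plan is to adapt the proof of Theorem \ref{tor-sur-oto} to curves, using that (a) the reduced motive of a smooth projective curve has motivic cohomology concentrated on the single diagonal $\mocod{1}$, (b) we already work in $Chow(k,\zz/p)$ so everything is automatically $p$-torsion, and (c) the relevant duality is $\Homi(-,\zz/p(1)[2])$. Most of the surface proof transplants with obvious simplifications: only one slope survives, so $\ck_C$ and $\ck_D$ involve only $\km(1)$, $\ct_C=\ct_D=\zz/p(1)[1]^{\oplus r}$, and the resulting Rost submodules sit inside $\HH^*_{et}\la-1\ra^{\oplus r}$ rather than inside two-slope modules.

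Starting with a direct summand $N$ of a reduced motive of a smooth projective curve, I would first observe that $\HH^*_{et}(N,\zz/p)$ is concentrated in degree $1$, producing a finite-dimensional $\ff_p$-space of some dimension $r$, and similarly for $N^{\vee}$. Choose dual bases $\tilde{u}_i,\tilde{u}^{\vee}_i$ normalised by $\la \tilde{u}_i,Q_0(\tilde{u}^{\vee}_j)\ra = \la Q_0(\tilde{u}_i),\tilde{u}^{\vee}_j\ra = \delta_{ij}$. By Beilinson--Lichtenbaum these lift to classes $u_i\in\hm^{1,1}(N,\zz/p)$ and $u^{\vee}_i\in\hm^{1,1}(N^{\vee},\zz/p)$, which assemble into maps $N\stackrel{(u_i)}{\row}\ct_C$ and $N^{\vee}\stackrel{(u^{\vee}_i)}{\row}\ct_D$ together with their $\Homi(-,\zz/p(1)[2])$-duals $\ct_C(-1)\row N$ and $\ct_D(-1)\row N^{\vee}$. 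As in the surface case, the compositions $\ct_C(-1)\row N\row\ct_C$ and $\ct_D(-1)\row N^{\vee}\row\ct_D$ must coincide with multiplication by $\tau$: they do so on etale cohomology by the normalisation, and injectivity of $\tau$ on the relevant diagonal promotes this to the motivic statement.

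Completing the two resulting pairs of maps into dual octahedra analogous to (\ref{oktaedry}), I would introduce $\ck_C=\ck_D=\km(1)^{\oplus r}$ and obtain the objects $N_{\bullet}(C), N^{\bullet}(C)$ and their $D$-analogs. The bidegree analysis familiar from Theorem \ref{tor-sur-oto}, shifted down by one slope, shows that $N^{\bullet}(C)$ has motivic cohomology concentrated on $\mocod{1}$ while $N_{\bullet}(C)$ has motivic homology concentrated on diagonal $0$; the latter therefore lies in the heart of the homotopic $t$-structure and corresponds to a Rost cycle submodule $C\subset H_C=\HH^*_{et}\la-1\ra^{\oplus r}$ in non-negative degrees, verifying (a), (b). Duality of the octahedra under $\Homi(-,\zz/p(1)[2])$ yields $\gamma_D=\gamma_C^{\vee}$, which is (c). The assignment $N\mapsto (C,D)$ depends only on the isomorphism class of $N$.

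For the converse, from a pair $(C,D)$ satisfying (a)--(c) I would splice the given triangles with the short exact triangles $\ct_{C,D}(-1)\stackrel{\tau}{\row}\ct_{C,D}\row\ck_{C,D}[1]$ to build dual octahedra producing a compact object $N\in\dmkF{\zz/p}$. The curve analog of Proposition \ref{Chow-crit} applied to the resulting motivic cohomology pattern forces $N$ to be a Chow motive, and the curve analog of Proposition \ref{N-r-eff} then realises it as a direct summand in the motive of a smooth projective (possibly disconnected) curve; the Tate summands split off automatically because $\HH^*_{et}(N,\zz/p)$ vanishes in degrees $0$ and $2$, so $N$ lands in the reduced motive. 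A short inverse-functor check, identical to the one hidden in Theorem \ref{tor-sur-oto}, closes the bijection. The main obstacle I anticipate is the effectivity step, namely the curve analogs of Propositions \ref{Chow-crit} and \ref{N-r-eff}; these should go through by the same strategy as for surfaces, with the simplification that only a single motivic slope is involved and the weight range is narrower.
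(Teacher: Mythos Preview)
Your approach is exactly what the paper does: the paper's proof is simply the sentence ``using arguments completely parallel to the proof of Theorem \ref{tor-sur-oto}'', and you have correctly identified the simplifications (single slope, working directly in $\dmkF{\zz/p}$, duality via $\Homi(-,\zz/p(1)[2])$, and the use of Propositions \ref{Chow-crit} and \ref{N-r-eff} for the converse direction).

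One slip to fix: the Bockstein $Q_0$ has no role in the curve pairing. With the duality $\Homi(-,\zz/p(1)[2])$, the pairing $\hm^{1,1}(N,\zz/p)\times\hm^{1,1}(N^{\vee},\zz/p)\to\hm^{0,1}(k,\zz/p)$ already lands in $\tau\cdot\ff_p$, so the correct normalization is simply $\la u_i,u_j^{\vee}\ra=\delta_{ij}\tau$. Your formula $\la\tilde{u}_i,Q_0(\tilde{u}_j^{\vee})\ra=\delta_{ij}$ would put the value in $\HH^1_{et}(\kbar,\zz/p)=0$, and in any case the Hodge half-diamond of $N$ is a single spot at $(1)[1]$ with no $Q_0$-partner. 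In the surface argument the Bockstein appeared because $u_i$ and $u_i^{\vee}$ sat in different bidegrees and the duality was integral, $\Homi(-,\zz(2)[4])$; neither feature is present here. With this corrected normalization (and the corresponding removal of $Q_0$ from the $\cdot\tau$ verification), the rest of your outline goes through unchanged.
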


This result permits to draw some conclusions about $p$-torsion motives of surfaces.

\begin{prop}
 \label{s-t-zero}
 In the notations of Theorem \ref{tor-sur-oto},
 for nonzero $M$, both $s$ and $t$ are non-zero.
\end{prop}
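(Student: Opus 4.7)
The plan is to prove the contrapositive: $s=0$ or $t=0$ forces $M=0$. The duality $M\leftrightarrow M^{\vee}$ under $\Homi(-,\zz(2)[4])$ exchanges the two invariants (the pair $(s,t)$ for $M^{\vee}$ is $(t,s)$), so it is enough to handle the case $s=0$ and then apply the same result to $M^{\vee}$ for the case $t=0$.

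Assume $s=0$. The structural input is the distinguished triangle
$$\ct_B(-1)\lrow M^{\vee}\lrow M_{\bullet}(B)[1]\lrow \ct_B(-1)[1]$$
read off the right-hand octahedron of \eqref{oktaedry}. With $s=0$ this specialises to $\ct_B=\zz/p(1)[1]^{\oplus t}$ and hence $\ct_B(-1)=\zz/p[1]^{\oplus t}$. I would push this triangle against the Chow-motive property of $M^{\vee}$: applying $\Hom_{DM}(-,\zz[2])$ and using the standard vanishing $\hm^{2,0}(M^{\vee},\zz)=0$ (motivic cohomology of a Chow motive vanishes above the slope-$2$ line), the resulting long exact sequence produces an injection
$$(\zz/p)^{\oplus t}\cong \Hom_{DM}(\zz/p,\zz[1])^{\oplus t}\hookrightarrow \hm^{2,0}(M_{\bullet}(B),\zz),$$
where the left-hand group records $t$ copies of the integral Bockstein class generating $\Hom_{DM}(\zz/p,\zz[1])=\zz/p$.

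The remaining step is to bound $\hm^{2,0}(M_{\bullet}(B),\zz)$ from above, forcing $t=0$. Since $M_{\bullet}(B)$ lies in the heart of the homotopy $t$-structure and represents the Rost cycle submodule $B\subset H_B=\HH^*_{et}\la-1\ra^{\oplus t}$ with $B^j=0$ for $j<0$, and since $M_{\bullet}(B)$ is $p$-torsion (so that $R\Hom(M_{\bullet}(B),\zz)\simeq R\Hom(M_{\bullet}(B),\zz/p)[-1]$), this group identifies with an $\Ext^1$ in the category of cycle modules. Combined with the defining triangle \eqref{triangle} for $B$ and the normalisation $B^{<0}=0$ inside a module with generators in degree $-1$ only, one argues that this $\Ext^1$ is too small to house $(\zz/p)^{\oplus t}$ unless $t=0$. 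Once $s=t=0$ is established, the $\ff_p$-\'etale cohomology of $M$ vanishes in every degree; by Beilinson-Lichtenbaum and the degeneration of the $\tau$-spectral sequence for $M$ (already noted in the proof of Theorem \ref{tor-sur-oto}), motivic cohomology of $M$ vanishes over every field extension of $k=\kbar$, and therefore $M=0$.

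The main obstacle is the cycle-module $\Ext$-analysis bounding $\hm^{2,0}(M_{\bullet}(B),\zz)$; every other step is a formal manipulation of the octahedron triangles together with the standard vanishings of motivic cohomology of Chow motives above the slope-$2$ line and of fields in negative weight.
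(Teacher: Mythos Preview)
Your proposal has a genuine gap at the key step, and the approach differs substantially from the paper's.

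First, the triangle you read off the octahedron \eqref{oktaedry} is off by a shift: the distinguished face is
\[
\ct_B(-1)\stackrel{g^{\vee}}{\lrow} M^{\vee}\lrow M_{\bullet}(B)\lrow \ct_B(-1)[1],
\]
with $M_{\bullet}(B)$ rather than $M_{\bullet}(B)[1]$. (You can check this directly: $M_{\bullet}(B)$ sits in the heart of the homotopy $t$-structure with $\mohod{0}(M_{\bullet}(B),\zz)=B=\mohod{0}(M^{\vee},\zz)$, which forces the cone of $g^{\vee}$ to be $M_{\bullet}(B)$ without shift.) With the correct triangle and $s=0$, applying $\Hom(-,\zz[2])$ yields not an injection into $\hm^{2,0}(M_{\bullet}(B),\zz)$ but an \emph{isomorphism}
\[
(\zz/p)^{\oplus t}\;\stackrel{\sim}{\lrow}\;\hm^{3,0}(M_{\bullet}(B),\zz),
\]
since $\hm^{2,0}(M^{\vee})=\hm^{3,0}(M^{\vee})=0$. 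The target lies on the third diagonal, which is precisely where $M_{\bullet}(B)$ carries all of its motivic cohomology, and over the algebraically closed base it equals $(H_A/A)$ in the appropriate degree; with $s=0$ this is exactly $(\zz/p)^{\oplus t}$. So the sequence recovers a tautology rather than a constraint, and your ``$\Ext^1$ is too small'' claim is false in the relevant degree. Even if one grants your shifted triangle, you would land in $\hm^{2,0}(M_{\bullet}(B),\zz)$, which \emph{is} zero (diagonal $2$) --- but then the argument finishes immediately and no ``$\Ext$-analysis'' is needed; the fact that you invoke one suggests you have not actually checked which diagonal you are on.

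The paper's proof is structurally different. Reducing instead to $t=0$, it observes that the data $(A,B)$ then live inside $\HH^*_{et}\la-1\ra^{\oplus s}$ and $\HH^*_{et}\la-2\ra^{\oplus s}$, and that the shifted pair $(C,D)=(A,\,B\la 1\ra)$ satisfies the hypotheses of the curve analogue, Theorem~\ref{curv}. This produces a Chow motive $N$ (with $\zz/p$-coefficients) that is a direct summand of a reduced curve motive. The point of the shift is that $B^{<0}=0$ becomes $D^{\leq 0}=0$; in particular $D^0=\CH_0(N^{\vee})$ vanishes over every extension, whence $N=0$ by the standard Manin argument, and then $M=0$. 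The degree shift converting the vanishing range $j<0$ into $j\leq 0$ is the mechanism you are missing --- it is what turns a tautology into an honest vanishing of Chow groups.
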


\begin{proof}
 Suppose, $s\cdot t=0$. Changing $M$ by $M^{\vee}$, if necessary, we can assume that $t=0$. Then, 
 $A\subset H_A=\HH^*_{et}\la-1\ra^{\oplus s}$, $B\subset H_B=\HH^*_{et}\la-2\ra^{\oplus s}$. Consider $C=A$, $D=B\la 1\ra$.
 Then $C\subset H_C=\HH^*_{et}\la-1\ra^{\oplus s}$,
 $D\subset H_D=\HH^*_{et}\la-1\ra^{\oplus s}$ and this pair of Rost submodules satisfies all the conditions of 
 Theorem \ref{curv}(2) (note that for the adjunction
 $$
 \xymatrix{
\dmkF{\zz} \ar@/_0.7pc/ @{->}[r]_{\nu^*} & 
\dmkF{\zz/p} \ar@/_0.7pc/ @{->}[l]_{\nu_*}
},
 $$
 we have: $\Homi_{\dmkF{\zz}}(\nu_*(U),\zz(2)[4])=\nu_*\Homi_{\dmkF{\zz/p}}(U,\zz/p(2)[3])$).
 Thus, it defines a direct summand $N$ in the reduced motive of some smooth projective curve, in particular, a Chow motive
 with $\zz/p$-coefficients. But since $B^j=0$, for $j<0$, it follows that $D^j=0$, for $j\leq 0$. Hence, the Chow motive
 $N^{\vee}$ has no Chow groups $\CH_*$ over any field extension $F/k$. By the standard arguments \cite{Ma68}
 this implies
 that $N^{\vee}=0=N$, and so, $s=0$ and $M=0$.
 \Qed
\end{proof}

Thus, in the Hodge half-diamond of $M$ all 4 potential positions are filled. In particular, we get:

\begin{cor}
\label{Pic-nontriv}
For any non-zero $p$-torsion direct summand $M$ in the motive of a surface (over any field of characteristic zero),
$Pic(M_{\kbar})\neq 0$.
\end{cor}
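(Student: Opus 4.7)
The plan is to reduce to the algebraically closed case and then read off a non-trivial line bundle class directly from the Hodge half-diamond of $M_{\kbar}$. First, by the result of S.Gille invoked in the proof of Theorem \ref{K-S}, a non-zero $p$-torsion summand of a surface motive over $k$ remains non-zero after base change to $\kbar$; so we may assume $k=\kbar$. Proposition \ref{s-t-zero} then yields $s,t\geq 1$ for the invariants of the Hodge half-diamond of $M$.

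Next, I would identify $\op{Pic}(M)=\hm^{2,1}(M,\zz)$; this holds for a smooth variety via $\hm^{2,1}(X,\zz)=\CH^1(X)=\op{Pic}(X)$ and passes to Chow direct summands by functoriality in correspondences. Since $M$ is $p$-torsion, the integral Bockstein long exact sequence reduces to
\begin{equation*}
0\row\hm^{2,1}(M,\zz)\row\hm^{2,1}(M,\zz/p)\row\hm^{3,1}(M,\zz)\row 0,
\end{equation*}
so it suffices to show $\hm^{2,1}(M,\zz/p)=\ff_p^s$ and $\hm^{3,1}(M,\zz/p)=0$.

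For these two computations I would use that, as established in the proof of Theorem \ref{tor-sur-oto}, multiplication by $\tau$ is injective on $\hm^{*,*}(M,\zz/p)$, so the triangle $\zz/p(-1)\stackrel{\tau}{\lrow}\zz/p\lrow\km\lrow\zz/p(-1)[1]$ yields short exact sequences
\begin{equation*}
0\row\hm^{a,b-1}(M,\zz/p)\stackrel{\tau}{\lrow}\hm^{a,b}(M,\zz/p)\row\hm^{a,b}(M,\km)\row 0.
\end{equation*}
Combined with the vanishing $\hm^{a,0}(M,\zz/p)=0$ for $a>0$ (motivic cohomology in weight zero is concentrated in degree zero), both of my target groups reduce to inspecting the Hodge half-diamond of $M$: the only non-vanishing positions of $\hm^{*,*}(M,\km)$ are $(1)[1],(1)[2],(2)[2],(2)[3]$, so $\hm^{2,1}(M,\km)=\ff_p^s$ and $\hm^{3,1}(M,\km)=0$. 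This yields $\hm^{2,1}(M,\zz/p)=\ff_p^s$ and $\hm^{3,1}(M,\zz/p)=0$; then the Bockstein sequence forces $\hm^{3,1}(M,\zz)=0$ and gives $\op{Pic}(M_{\kbar})=\hm^{2,1}(M_{\kbar},\zz)=\ff_p^s\neq 0$. The main point is the identification of $\op{Pic}$ with the piece $\hm^{2,1}$ (rather than $\hm^{2,2}$) of motivic cohomology; everything else is formal bookkeeping between $\hm^{*,*}(-,\zz)$, $\hm^{*,*}(-,\zz/p)$ and $\hm^{*,*}(-,\km)$ at the four relevant bidegrees.
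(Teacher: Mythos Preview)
Your proposal is correct and follows essentially the same approach as the paper: reduce to $k=\kbar$ via Gille's conservativity result, then invoke Proposition~\ref{s-t-zero} to get $s\geq 1$. The paper's own proof is two sentences and leaves the passage from ``$s\geq 1$ in the Hodge half-diamond'' to ``$\CH^1(M_{\kbar})\neq 0$'' implicit; you have spelled this out explicitly via the Bockstein and the $\tau$-exact sequence, which is a welcome elaboration rather than a different route.
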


\begin{proof} 
By the result of S.Gille \cite{Gi14}, the passage to the algebraic closure is conservative for direct summands of motives of surfaces.
The case of an algebraically closed field follows from Proposition \ref{s-t-zero}.
 \Qed
\end{proof}

In the end, let me say few words about endomorphisms of
$p$-torsion motives of surfaces.

The octahedra (\ref{oktaedry}) appear to be functorial.

\begin{prop}
 \label{okt-funct}
 Let $M$ and $N$ be $p$-torsion motives of surfaces. Then any
 map $\ffi:M\row N$ extends uniquely to the map of octahedra
 $(\ref{oktaedry})$.
\end{prop}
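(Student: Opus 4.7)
The plan is to extend $\ffi$ vertex-by-vertex to each octahedron by a sequence of unique lifting arguments. The central tool is that the objects $\ct_A$, $\ct_A(-1)$, $\ck_A$, $M^{\bullet}(A)$, and $M_{\bullet}(A)$ have motivic cohomology and homology concentrated in very narrow diagonals, forcing many Hom-groups between them (and their shifts) to vanish. First, I would construct $g_{\ffi}:\ct_A^M\row\ct_A^N$ via the distinguished triangle $M\stackrel{g_M}{\row}\ct_A^M\row M^{\bullet}(A)_M[1]\row M[1]$ coming from the right octahedron of (\ref{oktaedry}): the morphism $g_N\circ\ffi:M\row\ct_A^N$ factors through $g_M$ provided $\Hom(M^{\bullet}(A)_M[i],\ct_A^N)=0$ for $i=0,1$. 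Since $\ct_A^N$ is a sum of copies of $\zz/p(1)[1]$ and $\zz/p(2)[2]$, these are $\zz/p$-motivic cohomology groups of $M^{\bullet}(A)_M$ in diagonals $a-b\in\{-1,0\}$, which vanish by the integral concentration $\mocod{l}(M^{\bullet}(A)_M,\zz)=0$ for $l\neq 2$ together with the Bockstein exact sequence; the same vanishing also forces $g_{\ffi}$ to be canonical.

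Next, I would dually construct $f_{\ffi}:\ct_A^M(-1)\row\ct_A^N(-1)$ by factoring $\ffi\circ f_M:\ct_A^M(-1)\row N$ through $f_N$; the required vanishings $\Hom(\ct_A^M(-1)[i],M^{\bullet}(A)_N)=0$ for $i=-1,0$ similarly reduce to motivic homology of $M^{\bullet}(A)_N$ in the wrong diagonal via $\mohod{l}(M^{\bullet}(A)_N,\zz)=0$ for $l\neq -1$ plus the Bockstein sequence. The compatibility $g_{\ffi}\circ\tau=\tau\circ f_{\ffi}$ is then automatic: both sides are two factorizations of the single morphism $g_N\circ\ffi\circ f_M$, using the octahedron identities $g_M\circ f_M=\tau$ and $g_N\circ f_N=\tau$. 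The resulting commutative square involving $\tau$ provides, by taking cones of the horizontal maps, a canonical morphism $\ck_{\ffi}:\ck_A^M\row\ck_A^N$.

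Finally, I would extract the maps $M_{\bullet}(A)_M\row M_{\bullet}(A)_N$ and $M^{\bullet}(A)_M\row M^{\bullet}(A)_N$ from the triangle (\ref{triangle}) applied to $M$ and to $N$, using further diagonal-based Hom vanishing to guarantee unique lifts (for instance, $\Hom(M_{\bullet}(A)_M,M^{\bullet}(A)_N[j])$ vanishes in the relevant range because $M_{\bullet}(A)_M$ is concentrated in homological diagonal $0$ while $M^{\bullet}(A)_N$ sits in diagonal $-1$). The same procedure applied to $\ffi^{\vee}:N^{\vee}\row M^{\vee}$ produces compatible maps on the second octahedron of (\ref{oktaedry}), and $\Homi(-,\zz(2)[4])$-duality matches the two constructions. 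The main obstacle I foresee is uniqueness and global coherence: a priori one might worry that lifts produced at distinct stages could fail to assemble into a single map of octahedra (different faces forcing different choices). This is resolved uniformly by the Hom-vanishing above, which forces every individual lift to be unique, so any two candidate extensions must coincide.
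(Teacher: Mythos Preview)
Your approach is correct and broadly parallel to the paper's, but the order of construction differs in an instructive way. The paper begins by observing that $M_{\bullet}(A)$ is the $0$-th slice of $M$ in the homotopy $t$-structure (and $M^{\bullet}(A)$ is dual to the $0$-th slice of $M^{\vee}$); since slice functors are genuinely functorial, the maps on $M_{\bullet}(A)$ and $M^{\bullet}(A)$ come for free, and the remainder of the argument is purely about \emph{uniqueness} of the extensions to $\ct_A$, $\ct_A(-1)$ and $\ck_A$. For uniqueness on $\ct_A$ the paper also uses a slightly different trick than your Hom-vanishing: since $M_{et}=(\ct_A)_{et}$ and $\Hom(\ct_A(M),\ct_A(N))=\Hom((\ct_A(M))_{et},(\ct_A(N))_{et})$, the induced map on $\ct_A$ is determined by $\ffi_{et}$. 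Your route---building $g_{\ffi}$, $f_{\ffi}$ first via lifting, then $\ck_{\ffi}$ as a map of cones, and only at the end extracting the maps on $M_{\bullet}(A)$, $M^{\bullet}(A)$---works equally well and is more self-contained (it does not invoke the $t$-structure machinery), at the price of a few more explicit Hom computations.

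One small slip to fix: in your second paragraph, the obstruction group for lifting $\ffi\circ f_M$ through $f_N$ should involve $N_{\bullet}(A)$, not $M^{\bullet}(A)_N$, since the relevant triangle is $\ct_A(-1)\stackrel{f}{\row}N\row N_{\bullet}(A)\row\ct_A(-1)[1]$. The vanishing you need is $\Hom(\ct_A^M(-1),N_{\bullet}(A)[i])=0$ for $i=-1,0$, which follows from $\mohod{l}(N_{\bullet}(A),\zz)=0$ for $l\neq 0$ exactly as you indicate.
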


\begin{proof}
 Since $M_{\bullet}(A)$ is the 0-th slice of $M$ and $M^{\bullet}(A)$ is the dual to the 0-th slice of $M^{\vee}$, the assignment $M\mapsto M_{\bullet}(A),\,M^{\bullet}(A)$ is functorial. Hence, $\ffi$ extends to a map of upper halves of octahedra (the ones, containing $M$ (respectively, $N$) and $M^{\vee}$ (respectively, $N^{\vee}$)). Since 
 $M_{et}=(\ct_A)_{et}$ and $\Hom(\ct_A(M),\ct_A(N))=
 \Hom(\ct_A(M)_{et},\ct_A(N)_{et})$, the extensions to
 $\ct_A$ and $\ct_A(-1)$ are unique. 
 Since $\Hom(\ct_A(M)(-1)[1],N_{\bullet}(A))=0$, the extension to $M_{\bullet}(A)$ is unique.
 By the dual argument, the same is true about extension to
 $M^{\bullet}(A)$ (as well as to $M_{\bullet}(B)$ and $M^{\bullet}(B)$). Because 
 $\Hom(\ct_A(M)(-1),\ck_A(N))=0$, we obtain the unique
 extension to the $N-E$ triangles in the lower halves of octahedra. Finally, because
 $\Hom(\ct_A(M)(-1),N^{\bullet}(A))=0$, from the fact that our extension commutes with
 $\ck_A(L)\stackrel{[1]}{\low}\ct_A(L)\stackrel{[1]}{\row}
 L^{\bullet}(A)$ (for $L=M,N$), it follows that it commutes
 with $\beta_A$ and, by duality, with $\alpha_A$.
 \Qed
\end{proof}

In particular, we obtain a natural homomorphism
$$
\End(M)\row\End(\ck_A(M))=\Mat(s,t):=\Mat_{s\times s}(\ff_p)\times\Mat_{t\times t}(\ff_p).
$$
Denote the image of it as $R(A)$.

\begin{prop}
 \label{RA-Mat-nilp}
 \begin{itemize}
  \item[$(1)$] $R(A)=\End(M)/Nilp$, where $Nilp$ is finite nilpotent ideal. 
  \item[$(2)$] $\psi\in\Mat(s,t)$ belongs to $R(A)$ if and only if it maps $A$ to itself, if and only if it maps $A^0=\CH_0(M)$ (considered over all generic points of $X$) to itself.
 \end{itemize}
\end{prop}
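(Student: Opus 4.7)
My plan has two parts, following the decomposition of the proposition. Finiteness of $\op{Nilp} = \ker(\End(M)\to R(A))$ is immediate from Proposition \ref{MN-kon}, so the bulk of (1) is showing that $\op{Nilp}$ is nilpotent. I will trace an arbitrary $\ffi\in\op{Nilp}$ through the octahedra \eqref{oktaedry}. First, $\ffi_{\ct_A} = 0$: functoriality makes the composition $\ct_A \stackrel{\ffi_{\ct_A}}{\to} \ct_A \to \ck_A[1]$ coincide with $\ffi_{\ck_A[1]}\circ(\ct_A\to\ck_A[1]) = 0$, and the long exact sequence associated to $\ct_A(-1)\stackrel{\tau}{\to}\ct_A\to\ck_A[1]$ identifies the kernel of the relevant post-composition with $\Hom(\ct_A,\ct_A(-1))$, a group that vanishes because all pertinent bidegrees of $\moco{a}{b}(k,\zz/p)$ on a field lie off the Milnor diagonal. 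Next, functoriality gives $\alpha_A\circ\ffi_{M_{\bullet}(A)} = \ffi_{\ck_A}\circ\alpha_A = 0$; since $\alpha_A$ corresponds to the injection $A\hookrightarrow H_A$ of Rost cycle modules in the heart of the homotopic $t$-structure, this forces $\ffi_{M_{\bullet}(A)} = 0$.

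From the triangle $M_{\bullet}(A)\stackrel{a}{\to} M\stackrel{g}{\to}\ct_A$ I then deduce $g\circ\ffi_M = 0$ and $\ffi_M\circ a = 0$. The first gives $\ffi_M = a\circ h$ for some $h\in\Hom(M,M_{\bullet}(A))$; the second, together with injectivity of the map $\Hom(M_{\bullet}(A),M_{\bullet}(A)) \to \Hom(M_{\bullet}(A),M)$ given by post-composition with $a$ (which itself follows from $\Hom(M_{\bullet}(A),\ct_A[-1]) = 0$, another bidegree computation), forces $h\circ a = 0$; this in turn lets me factor $h = h'\circ g$ for some $h'\in\Hom(\ct_A,M_{\bullet}(A))$. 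Thus every $\ffi\in\op{Nilp}$ has the shape $\ffi_M = a\circ h'\circ g$. Since $g\circ a = 0$ (two successive arrows in a distinguished triangle), for any $\ffi_1,\ffi_2\in\op{Nilp}$ I get $\ffi_1\circ\ffi_2 = a\circ h_1'\circ(g\circ a)\circ h_2'\circ g = 0$, so in fact $\op{Nilp}^2 = 0$, which gives the nilpotency.

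For (2), Proposition \ref{generators-A} gives the equivalence between "$\psi$ preserves $A$" and "$\psi$ preserves $A^0$ over all generic points of $X$", because $A$ is generated as a Rost cycle module by its degree-zero sections at the $k(X_r)$, and any $\HH^*_{et}$-linear endomorphism of $H_A$ preserves the submodule they generate iff it preserves those generators. The direction $\psi\in R(A)\Rightarrow\psi(A)\subseteq A$ follows from functoriality: for any lift $\ffi\in\End(M)$ the induced $\ffi_{M_{\bullet}(A)}$ is a Rost cycle module endomorphism of $A$ whose image in $H_A$ coincides with $\psi|_A$. Conversely, given $\psi$ preserving $A$, I will construct $\ffi$ stepwise: $\psi|_A$ corresponds to a morphism $\psi_{M_{\bullet}(A)}$ via D\'eglise's identification of the heart with Rost cycle modules; completing the triangle $M_{\bullet}(A)\to\ck_A\to M^{\bullet}(A)$ yields $\psi_{M^{\bullet}(A)}$; one reads off $\psi_{\ct_A}$ from $\psi_{\ck_A}$ via the triangle $\ct_A(-1)\to\ct_A\to\ck_A[1]$; and finally the lift to $\psi_M\in\End(M)$ through the triangle $M_{\bullet}(A)\to M\to\ct_A$ exists because its obstruction in $\Hom(\ct_A,M_{\bullet}(A)[1])$ vanishes by another bidegree computation (the relevant motivic homology bidegrees of $M_{\bullet}(A)$ land off the 0-th diagonal). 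The hardest part will be verifying that these stepwise extensions piece together coherently as a morphism of the full octahedron and that the induced action on $\ck_A$ reproduces the original $\psi$.
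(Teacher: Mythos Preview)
Your overall strategy matches the paper's: for (1) use the functoriality of the octahedra (Proposition~\ref{okt-funct}) to show that $\ffi\in\op{Nilp}$ induces zero on the outer pieces and hence is nilpotent on $M$; for (2) build the lift of $\psi$ step by step through the same octahedra. Your argument for (1) is in fact more explicit than the paper's terse ``$\ffi$ is nilpotent on $\ct$ and $\ct(-1)$, and so, on $M$'', and your factoring would even yield the sharper conclusion $\op{Nilp}^2=0$.

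There is, however, a concrete error in your triangle bookkeeping. The distinguished triangles through $M$ in (\ref{oktaedry}) are
\[
\ct_A(-1)\stackrel{f}{\lrow} M\lrow M_{\bullet}(A)\stackrel{[1]}{\lrow}
\qquad\text{and}\qquad
M^{\bullet}(A)\lrow M\stackrel{g}{\lrow}\ct_A\stackrel{[1]}{\lrow},
\]
so there is \emph{no} map $a:M_{\bullet}(A)\to M$ in the picture; indeed the paper's own proof uses the vanishing $\Hom(M_{\bullet}(A),M)=0$. Your factoring ``$\ffi_M=a\circ h'\circ g$'' therefore does not make sense as written. The repair is immediate: run the identical argument with the first triangle above. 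From $\ffi_{M_{\bullet}(A)}=0$ and $\ffi_{\ct_A(-1)}=0$ (the latter by the same bidegree computation you gave for $\ct_A$), one gets $\ffi_M=f\circ h'\circ c$ with $c:M\to M_{\bullet}(A)$ and $h':M_{\bullet}(A)\to\ct_A(-1)$; the needed injectivity step becomes $\Hom(\ct_A(-1),M_{\bullet}(A)[-1])=0$, which holds since $M_{\bullet}(A)$ lies in the heart and the relevant bidegrees are off the zeroth diagonal. Then $c\circ f=0$ gives $\op{Nilp}^2=0$ as you intended. Alternatively, use the second triangle after first observing $\ffi_{M^{\bullet}(A)}=0$ by duality with the $B$-octahedron. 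Your treatment of (2) is fine and parallels the paper's.
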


\begin{proof}
 The kernel of the surjection $\End(M)\twoheadrightarrow R(A)$ consists of those $\ffi$, which are trivial on $M_{\bullet}$, $M^{\bullet}$ and $\ck$ ($A$ and $B$). Then $\ffi$ is nilpotent on $\ct$ and $\ct(-1)$, and so, on $M$. By Proposition \ref{MN-kon} the kernel is finite.
 
 Clearly, any element of $R(A)$ maps $A$ to itself. Conversely, suppose $\psi$ maps $A$ to itself. Then it lifts uniquely to an endomorphism of 
 $M_{\bullet}(A)[-1]\row\ck_A\row 
 M^{\bullet}(A)\row M_{\bullet}(A)$ and $\ffi^{\vee}$ -
 to an endomorphism of a similar $B$-triangle.
 Any $\psi\in\End(\ck_A)$ lifts to an endomorphism
 of $\ck_A\row\ct_A(-1)\stackrel{\cdot\tau}{\row}\ct_A\row\ck_A[1]$.
 Since $\Hom(M_{\bullet}(A),M)=0$, we have:
 $\End(M_{\bullet}(A)\stackrel{[1]}{\row}\ct_A(-1))=
 \End(M_{\bullet}(A)\stackrel{[1]}{\row}\ct_A(-1)\row
 M\row M_{\bullet}(A))$ and, by duality,
 $\End(\ct_A\stackrel{[1]}{\row}M^{\bullet}(A))=
 \End(\ct_A\stackrel{[1]}{\row}M^{\bullet}(A)\row M\row\ct_A)$.
 Finally, because $\Hom(M_{\bullet}(A),\ct_A)=0$, if our extension commutes with $f$ and $(\cdot\tau)$, then it commutes with $g$.
 
 It remains to note that by Proposition \ref{generators-A},
 $A$ is generated by $A^0$ as a Rost cycle module. So, 
 $\psi$ respects $A$ if and only if it respects $A^0$
 (sufficient to know for generic points of $X$).
 \Qed
\end{proof}

\begin{rem}
Observe, that we obtain a canonical splitting 
$R(A)\row\End(M)$. Also, we get a $1$-to-$1$ correspondence
between idempotents of $\End(M)$ and that of $R(A)$.
\end{rem}

\begin{exa}
 \label{End-Godeaux}
 Let $M$ be the Godeaux torsion motive. Then $s=t=1$ and
 $\CH_0(M_{\cc(X)})=\zz/5$ spanned by $(\wt{v},\wt{u})$. Thus,
 $R(A)=\ff_5$ embedded diagonally into 
 $\Mat(1,1)=\ff_5\times\ff_5$. In particular, $M$ is indecomposable (which is also clear from Corollary
 \ref{Pic-nontriv}).
 
 It is easy to see that the ideal $Nilp$, in this case, is
 $1$-dimensional, spanned by the composition
 $M\stackrel{u}{\row}\zz/5(1)[1]\stackrel{Q_0}{\row}
 \zz/5(1)[2]\stackrel{u^{\vee}}{\row} M$. 
 Recall that $M=\nu_*({\mathbf M})$, for some
motive ${\mathbf M}\in\dmkF{\zz/5}$. The same calculations
give: $\End_{\dmkF{\zz/5}}({\mathbf M})=\ff_5$.
\end{exa}

\section{Appendix: Chow motives vis geometric motives}
\label{section-five}

The purpose of this appendix is to show that looking at motivic homology and cohomology of a geometric motive $M$ one
can check, if the object is pure and, moreover, one can determine
the dimension of the respective smooth projective variety
($M$ is a direct summand of) as well as the Tate-shift involved.
The main tool which permits to see it is the weight structure 
of Bondarko \cite{Bon} on $\dmgmk$, and these results can be
alternatively deduced from those of \cite{BK,BS}.
Below, $k$ is any field of characteristic zero.

Recall the standard notations for {\it motivic homology}
$\moho{i}{j}(M,\zz):=\Hom_{DM(k)}(\zz(j)[i],M)$ and 
{\it motivic cohomology} 
$\moco{i}{j}(M,\zz):=\Hom_{DM(k)}(M,\zz(j)[i])$ of the motive $M$.

\begin{prop}
 \label{Chow-crit}
 Suppose $M\in\dmgmk$.
 Then the following conditions are equivalent:
 \begin{itemize}
  \item[$(1)$] $M\in Chow(k)$;
  \item[$(2)$] $\moho{i}{j}(M_K,\zz)=0$, for $i<2j$, while $\moco{i}{j}(M_K,\zz)=0$, for $i>2j$, for any finitely generated field extension $K/k$.
 \end{itemize}
\end{prop}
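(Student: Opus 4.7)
My approach is based on Bondarko's weight structure $w$ on $\dmgmk$, whose heart is precisely $Chow(k)$ \cite{Bon}.

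For the direction $(1)\Rightarrow (2)$, a Chow motive $M$ is, up to Tate twist, a direct summand of $M(X)$ for a smooth projective variety $X$ of dimension $d$. By Voevodsky's identification of motivic (co)homology of smooth varieties with higher Chow groups,
$$
\moco{i}{j}(X_K,\zz)=\CH^j(X_K,2j-i),\qquad
\moho{i}{j}(X_K,\zz)=\CH_j(X_K,i-2j),
$$
and these vanish for $2j-i<0$, resp. $i-2j<0$, since higher Chow groups vanish in negative simplicial degrees. Taking direct summands (motivic (co)homology respects idempotents) and observing that a Tate twist $(n)[2n]$ shifts both $i$ and $2j$ by $2n$, hence preserves the quantity $i-2j$, gives the bounds in $(2)$.

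For $(2)\Rightarrow (1)$, I would use that $w$ is bounded, so the weight truncations yield functorial distinguished triangles $w_{\leq 0}M\row M\row w_{>0}M\row[1]$ and $w_{<0}M\row M\row w_{\geq 0}M\row[1]$. To show $M\in Chow(k)$ it suffices to show both $w_{>0}M=0$ and $w_{<0}M=0$, and these two statements are symmetric under $M\mapsto M^{\vee}$, so I focus on the first. Associated to a weight complex $N^{\bullet}$ of $M$ (each $N^p\in Chow(k)$) is Bondarko's weight spectral sequence $E_1^{p,q}=\moco{q}{j}(N^p_K)\Rightarrow\moco{p+q}{j}(M_K)$. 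By the $(1)\Rightarrow(2)$ computation applied to the Chow components $N^p$, the $E_1$-page is supported in the region $q\leq 2j$, i.e., on and below the diagonal. The hypothesis $\moco{i}{j}(M_K,\zz)=0$ for $i>2j$, together with this support pattern, forces by descending induction on $p$ the vanishing, for every $j$, of each $E_1^{p,2j-p}=\moco{2j-p}{j}(N^p_K)$ with $p>0$. Ranging over $j$, this says that the Chow motive $N^p$ in positive weight has trivial Chow groups over every field extension $K/k$; by a standard detection argument (cf.~\cite{Ma68}) this forces $N^p=0$, so $w_{>0}M=0$. The dual argument applied to the homology half of $(2)$ gives $w_{<0}M=0$, whence $M\in Chow(k)$.

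The main obstacle is making the weight-spectral sequence step rigorous, since the differentials could a priori conspire to hide a nonzero $N^p$. The cleanest way around this is to invoke Bondarko's orthogonality criterion directly: $M\in\dmgmk^{w\leq 0}$ iff $\Hom_{\dmgmk}(M,P[l])=0$ for all $P\in Chow(k)$ and $l>0$, and dually for $w\geq 0$. One then reduces the test from arbitrary $P$ to Tate motives $\zz(j)$ over varying $K/k$, using that Chow groups over all $K$ determine a Chow motive up to isomorphism. Alternatively, the argument can be packaged using the results of \cite{BK, BS}, which give a semi-simplification yielding the conclusion without an explicit spectral sequence.
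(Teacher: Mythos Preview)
Your overall approach via Bondarko's weight structure is exactly the paper's, and your orthogonality-criterion formulation of $(2)\Rightarrow(1)$ is equivalent to the paper's inductive reduction of the weight bound: the paper shows that the map $M\to\omega_{\geq m}M=U[m]$ (for $U\in Chow(k)$ and $m>0$) is zero, which is precisely the orthogonality condition $\Hom(M,P[l])=0$ for $P\in Chow(k)$ and $l>0$.

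Your weight-spectral-sequence attempt is rightly abandoned; the individual terms $N^p$ of a weight complex are not invariants of $M$, so one cannot conclude $N^p=0$ from the vanishing of the abutment.

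The one genuine gap is in how you justify the reduction ``from arbitrary $P$ to Tate motives $\zz(j)$ over varying $K/k$''. Invoking that ``Chow groups over all $K$ determine a Chow motive up to isomorphism'' is not the relevant principle here: you are not trying to show $P=0$, but rather that $\Hom(M,P[l])=0$. The paper's mechanism is different and concrete: write $P^{\vee}$ (equivalently, $P$) as a direct summand of $M(X)(i)[2i]$ for some smooth projective $X$, so that $\Hom(M,P[l])$ is a summand of $\Hom(M\otimes M(X),\zz(i)[2i+l])$; then run the Brown--Gersten--Quillen (coniveau) spectral sequence on $X$. Its $E_1$-terms are groups of the form $\moco{2(i-c)+l}{\,i-c}(M_{k(y)},\zz)$ for points $y\in X$ of codimension $c$, and these vanish by hypothesis since $l>0$. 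That is the precise bridge from ``arbitrary Chow motive $P$'' to ``Tate motives over residue fields'', and it is what your sketch is missing.
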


\begin{proof}
The implication $(1)\Rightarrow (2)$ is clear from the standard properties of motivic cohomology - see \cite{Voe03}.
$(2)\Rightarrow (1)$: As was shown by Bondarko \cite{Bon}, on $\dmgmk$ one has a (unique) bounded non-degenerate weight structure whose heart is the category of Chow-motives $Chow(k)$. Let $\cd^{\leq m}$ and $\cd^{\geq m}$ be the respective subcategories. 

Let us show that $M\in\cd^{\leq 0}$. Indeed, since the weight
structure is bounded, $M\in\cd^{\leq m}$, for some $m$. If $m\leq 0$, we are done. Suppose $m>0$. Then there exists an
exact triangle
$$
\omega_{\leq m-1}M\row M\row\omega_{\geq m}M\row(\omega_{\leq m-1}M)[1]
$$
Since $M\in\cd^{\leq m}$, here $\omega_{\geq m}M=U[m]$, for
some $U\in Chow(k)$. 
Then $\Hom(M,U[m])=\Hom(M\otimes U^{\vee},\zz[m])$ which
is a direct summand in $\Hom(M\otimes M(X),\zz(i)[2i+m])$,
for some smooth projective variety $X$ and some $i\in\zz$.
Since, for any point $y\in X$ of co-dimension $c$, the
group $\Hom(M_{k(y)},\zz(i-c)[2(i-c)+m])$ is zero by our condition, it follows from the Brown-Gersten-Quillen type arguments that $\Hom(M\otimes M(X),\zz(i)[2i+m])=0$.
Thus, the morphism $M\row\omega_{\geq m}M$ is zero, and so,
$M$ is a direct summand of $\omega_{\leq m-1}M$. Hence,
$M\in\cd^{\leq m-1}$. By induction we obtain that
$M\in\cd^{\leq 0}$.

Applying the same arguments to the dual $M^{\vee}$ of $M$
and using triviality of motivic homology of $M$ below the slope $=2$
line, we obtain that $M\in\cd^{\geq 0}$.
Thus, $M$ belongs to the heart $\cd^{\geq 0}\cap\cd^{\leq 0}=
Chow(k)$. 
\phantom{a}\hspace{5mm}
 \Qed
\end{proof}

One can also deduce the above result from 
\cite[Theorem 3.3.3]{BK}.

\begin{prop}
 \label{Chow-r-eff} {\rm (cf. \cite[Theorem 3.2.1(1)]{BS})}
 Suppose, $N\in Chow(k)$ be such a Chow motive that
 $\moho{2i}{i}(N_K,\zz)=0$, for $i<r$ and any finitely generated extension $K/k$. Then there exists
 a smooth projective (possibly reducible) 
 variety $X$ over $k$ such that
 $N$ is a direct summand of $M(X)(r)[2r]$.
\end{prop}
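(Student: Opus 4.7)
The plan is to reduce to showing that $N(-r)[-2r]$ is effective, then exhibit $N$ as a direct summand of $M(\tilde W)(r)[2r]$ for some smooth projective $\tilde W$ by factoring the projector defining $N$ through a cycle supported in codimension at least $r$ in one factor.

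First, I would write $N=(Y,\rho)(s)[2s]$ as a Chow motive and use the decomposition $M(\pp^m)=\oplus_{i=0}^{m}\zz(i)[2i]$ to absorb any positive Tate twist $s$ into $Y$ by crossing with a projective space; after this reduction, $N=(Y,\rho)$ for a smooth projective $Y$ of dimension $d$, and the hypothesis translates to $\rho_{*}\CH_{j}(Y_K)=0$ for every $j<r$ and every finitely generated extension $K/k$.

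The central step is a generalised Bloch--Srinivas decomposition of the diagonal: the projector $\rho$ is rationally equivalent to a cycle supported on $W\times Y$ for some closed $W\subset Y$ of codimension at least $r$. The argument runs via the coniveau filtration on $\CH^{d}(Y\times Y)$ in the first factor (equivalently, via the Brown--Gersten--Quillen/Bloch--Ogus spectral sequence), whose graded pieces are controlled by groups $\CH^{*}(Y_{k(w)})$ at generic points $w$ of subvarieties of prescribed codimension. The hypothesis, applied at each $w$ of codimension $<r$ (starting with $w=\eta_Y$ and iterating), kills these graded pieces successively, so after moving by rational equivalence $\rho$ admits a representative supported in codimension $\geq r$. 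It is here that the assumption at \emph{all} finitely generated $K/k$ is essential.

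Given such a representative, I would take a Hironaka resolution $\pi\colon\tilde W\to W$ with $\tilde W$ smooth projective of dimension $d-r$, so that $\tilde\iota:=i\circ\pi\colon\tilde W\to Y$ is a proper morphism of smooth projective varieties, and lift the cycle class to $\tilde\rho\in\CH^{d-r}(\tilde W\times Y)$. As a correspondence, $\tilde\rho$ represents a morphism $M(\tilde W)(r)[2r]\to M(Y)$. Dualising the covariant map $\tilde\iota_{*}\colon M(\tilde W)\to M(Y)$ via Poincar\'e duality for smooth projective motives supplies a Gysin-type transfer $\tilde\iota^{!}\colon M(Y)\to M(\tilde W)(r)[2r]$, and a direct cycle-level calculation yields $\rho=\tilde\rho\circ\tilde\iota^{!}$. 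Because $\rho$ is a projector, $N=(Y,\rho)$ is then a direct summand of $M(\tilde W)(r)[2r]$, which proves the claim with $X=\tilde W$ (possibly reducible, to accommodate several irreducible components of $W$). The main technical obstacle is the decomposition-of-the-diagonal step: classical Bloch--Srinivas covers $r=1$ integrally, and higher-$r$ versions due to Laterveer, Paranjape and Vial typically pass to $\qq$-coefficients, so the integral statement for a projector requires care; an alternative, more uniform route is provided by the weight-structure machinery of Bondarko--Sosnilo referenced as \cite[Theorem 3.2.1(1)]{BS}, which bypasses explicit cycle moving.
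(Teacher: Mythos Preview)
Your approach is essentially the same Bloch--Srinivas decomposition as the paper's, just organised as a single codimension-$r$ step instead of an inductive one-step lemma (Lemma~\ref{lem-r-eff}) applied $r$ times. Both arguments push the projector into a cycle supported on $W\times Y$ with $W$ of positive codimension, resolve, and factor.

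Two points worth noting. First, your worry at the end is misplaced: the integral higher Bloch--Srinivas step works here precisely because $\rho$ is a \emph{projector}. Once $\rho$ is supported on $Z_j\times Y$ with $\op{codim}Z_j=j<r$, its restriction to a generic point $\zeta$ of $Z_j$ lands in $\CH_j(Y_{k(\zeta)})$; since $\rho=\rho\circ\rho$, this restriction equals $\rho_*$ applied to itself and hence vanishes by hypothesis. No denominators are needed.

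Second, the genuine soft spot is the lifting step: a bare Hironaka resolution $\pi\colon\tilde W\to W$ need not give a surjection $\CH_*(\tilde W\times Y)\twoheadrightarrow\CH_*(W\times Y)$ when $W$ is singular, because subvarieties lying in the image of the exceptional locus may not lift birationally. The paper handles this by taking a Chow envelope: it adjoins the smooth blow-up centres $R_i$ to the resolution $\tilde Z_i$ and cites \cite[Proposition 7.7]{SU} for surjectivity of $\CH_*((\tilde Z_i\sqcup R_i)\times Y)\to\CH_*(Z_i\times Y)$. Your argument goes through once you replace ``Hironaka resolution'' by such an envelope (and then equalise dimensions of the components by crossing with projective spaces, as the paper does). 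The factorisation you write as $\rho=\tilde\rho\circ\tilde\iota^{!}$ is then exactly the paper's $\ffi=\beta\circ\alpha$.
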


\begin{proof}
 Since $N$, up to Tate-shift, is a direct summand in the motive of some smooth projective variety, which we may assume equi-dimensional (by multiplying the components of it by projective spaces of appropriate dimension), the statement
 follows from the following result.
 \phantom{a}\hspace{5mm} \Qed
 \end{proof}
 
 \begin{lem}
 \label{lem-r-eff}
 If $N$ is a direct summand of $M(Y)$, for some smooth projective equi-dimensional variety $Y$ and
 $\CH_0(N_K)=0$, for any finitely generated extension $K/k$,
 then $N$ is a direct summand of $M(P)(1)[2]$, for some
 smooth projective equi-dimensional variety $P/k$ with
 $\ddim(P)=\ddim(Y)-1$. 
 \end{lem}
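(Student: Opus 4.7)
The plan is to use the hypothesis $\CH_0(N_K)=0$ to show that the projector $\rho\in\CH_d(Y\times Y)$ defining $N$ (where $d=\ddim Y$) is supported on $D\times Y$ for some proper closed $D\subsetneq Y$, then to lift this support to a smooth projective equidimensional variety $P$ of dimension $d-1$ mapping properly to $Y$, and finally to extract the splitting by a correspondence computation.

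I first apply the hypothesis with $K=k(Y)$. The class of the generic point $[\eta_Y]\in\CH_0(Y_{k(Y)})$ is sent by $\rho_\ast$ to an element of $\CH_0(N_{k(Y)})=0$. Unwinding the action of a correspondence on zero-cycles, $\rho_\ast([\eta_Y])$ is precisely the restriction of $\rho_{k(Y)}$ to the fiber $\{\eta_Y\}\times Y_{k(Y)}$ of the first projection. Passing to the colimit over proper closed $D\subsetneq Y$ in the localization sequences
$$
\CH_d(D\times Y)\lrow\CH_d(Y\times Y)\lrow\CH_d((Y\setminus D)\times Y)\lrow 0,
$$
whose right-hand colimit is $\CH_0(Y_{k(Y)})$, we conclude that $\rho=(\iota_D\times\mathrm{id})_\ast\rho'$ for some proper closed $D\subsetneq Y$ and some $\rho'\in\CH_d(D\times Y)$.

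Decomposing $D=\cup_j D_j$ with $e_j:=\ddim D_j\leq d-1$, choose Hironaka resolutions $\pi_j:\tilde D_j\row D_j$ and set $P:=\coprod_j\tilde D_j\times\pp^{d-1-e_j}$, a smooth projective equidimensional variety of dimension $d-1$ with an obvious proper surjection $g:P\row D$. I lift $\rho'$ to $\rho''\in\CH_d(P\times Y)$ as follows: for each irreducible $d$-dimensional component $V\subset D_j\times Y$ appearing in $\rho'$, take its strict transform on $\tilde D_j\times Y$ and embed it via $\tilde D_j\times\{pt\}\times Y$ into $\tilde D_j\times\pp^{d-1-e_j}\times Y\subset P\times Y$; the resulting cycle maps to $[V]$ under $(g\times\mathrm{id})_\ast$. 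Setting $f:=\iota_D\circ g:P\row Y$, we obtain $(f\times\mathrm{id})_\ast\rho''=\rho$.

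Finally, $\rho''\in\CH_d(P\times Y)=\Hom(M(P)(1)[2],M(Y))$ defines a morphism $\beta$, and the transposed graph $[\Gamma_f^t]\in\CH^d(Y\times P)=\Hom(M(Y),M(P)(1)[2])$ defines a morphism $\alpha$; the standard composition rule for correspondences gives $\beta\circ\alpha=(f\times\mathrm{id})_\ast\rho''=\rho$. Writing the splitting of $\rho$ as $N\stackrel{\iota_N}{\row}M(Y)\stackrel{\pi_N}{\row}N$ with $\pi_N\iota_N=\mathrm{id}_N$ and $\iota_N\pi_N=\rho$, the maps $\alpha\circ\iota_N:N\row M(P)(1)[2]$ and $\pi_N\circ\beta:M(P)(1)[2]\row N$ satisfy $(\pi_N\circ\beta)\circ(\alpha\circ\iota_N)=\pi_N\rho\iota_N=\mathrm{id}_N$, exhibiting $N$ as a direct summand of $M(P)(1)[2]$. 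The main technical step is the lifting of $\rho'$ in the previous paragraph: the subtle case is when an irreducible component $V$ of $\rho'$ lies inside the exceptional locus of $\pi_j$ or fails to dominate its component $D_j$; this is handled by enlarging $P$ with separate resolved copies of the closures $\overline{p_1(V)}$ (multiplied by appropriate projective spaces to maintain equidimensionality of dimension $d-1$), on which the strict-transform argument then applies.
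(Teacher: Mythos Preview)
Your proof is correct and follows essentially the same strategy as the paper: use $\CH_0(N_{k(Y)})=0$ at the generic point to push the projector onto $D\times Y$ for a proper closed $D$, resolve $D$ to a smooth equidimensional $P$ of dimension $d-1$, lift the cycle, and factor $\rho=\beta\circ\alpha$ through $M(P)(1)[2]$.

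The one noteworthy difference is in the lifting step. You lift each component of $\rho'$ by taking strict transforms, and then patch the components that lie over the exceptional locus (or fail to dominate their $D_j$) by adjoining further resolved pieces to $P$. This works, but the case analysis is a little delicate and you leave it as a sketch. The paper avoids this entirely: it takes $P_i=\tilde Z_i\coprod R_i$, where $R_i$ is the disjoint union of all blow-up centers used in the Hironaka resolution of $Z_i$, and then invokes \cite[Proposition 7.7]{SU}, which guarantees that the proper push-forward $\CH_*(P_i\times Y)\to\CH_*(Z_i\times Y)$ is surjective. This gives the lift in one stroke without any component-by-component bookkeeping. A minor point: you implicitly treat $Y$ as connected; the paper works with the connected components $Y_i$ and their function fields $k(Y_i)$ separately, which you should also do.
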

 
 \begin{proof}
 Let $Y=\coprod_i Y_i$ be the decomposition of $Y$ into
 connected components and $E_i=k(Y_i)$ be their function
 fields. 
 $N$ is given by some projector $\ffi\in\End_{Corr}(Y)$, represented by some cycle $\Phi\in\CH^*(Y\times Y)$. 
 Since $\CH_0(N)=0$, this projector acts as zero on zero-cycles over any field extension. In particular, it sends the 
 generic point of $Y_i$ over $E_i$ to zero. This means, that
 the restriction of $\Phi_{k(Y_i)}\in\CH^*(Y_{k(Y_i)})$
 is zero. Thus, $\Phi$ is rationally equivalent to some
 cycle supported on $Z\times Y$, where $Z=\coprod_i Z_i$
 and $Z_i\subset Y_i$ is a reduced closed proper subscheme.
 By the results of Hironaka \cite{Hi}, we can resolve the singularities of $Z_i$ by blowing up smooth centers in the 
 singular locus of $Z_i$. Let $R_i$ be the 
 disjoint union of all such centers for $Z_i$, and 
 $\wt{Z}_i\row Z_i$ be the resulting smooth model. We have
 the natural proper map $P_i:=\wt{Z}_i\coprod R_i\row Z_i$. It follows from \cite[Proposition 7.7]{SU} that the respective
 push-forward 
 $\CH_*(P_i\times Y)\row\CH_*(Z_i\times Y)$ is surjective.
 By multiplying the components of $P_i$ by projective
 spaces of appropriate dimension, we may assume that $P$
 is equi-dimensional of dimension one less than $Y$.
 In light of the above surjectivity, the map $\ffi:M(Y)\row M(Y)$ decomposes as 
 $M(Y)\stackrel{\alpha}{\lrow}M(P)(1)[2]
 \stackrel{\beta}{\lrow}M(Y)$. Since $\ffi=\beta\circ\alpha$
 is a projector, so is 
 $\psi=\alpha\circ\beta\circ\alpha\circ\beta$, and the Chow-motive $N=(Y,\ffi)$ is isomorphic to a direct summand
 of $M(P)(1)[2]$. 
\phantom{a}\hspace{5mm}
 \Qed
\end{proof}

\begin{prop}
 \label{N-r-eff}
 Suppose $N\in Chow(k)$ be such a Chow motives that
 $\moho{2i}{i}(N_K,\zz)=0$, for $i<0$, and 
 $\moco{2i}{i}(N_K,\zz)$, for $i>r$, over any
 finitely generated field extension $K/k$. Then
 $N$ is a direct summand in the motive $M(X)$ of some
 smooth projective (possibly reducible) variety $X$ of
 dimension $r$.
\end{prop}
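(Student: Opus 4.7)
The plan is to combine Proposition \ref{Chow-r-eff} with a dimension-reducing descent provided by Lemma \ref{lem-r-eff} applied via duality.

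First, applying Proposition \ref{Chow-r-eff} with $r$ replaced by $0$ to the hypothesis $\moho{2i}{i}(N_K,\zz)=0$ for $i<0$, I obtain that $N$ is a direct summand of $M(Y)$ for some smooth projective equi-dimensional $k$-variety $Y$ of some dimension $d$. If $d\leq r$, I multiply the components of $Y$ by $\pp^{r-d}$: by the projective bundle decomposition $M(Y)$ is a direct summand of $M(Y\times\pp^{r-d})$, so we obtain $N$ as a direct summand of $M(X)$ with $X$ smooth projective equi-dimensional of dimension exactly $r$, and we are done in this case.

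Suppose now $d>r$. Consider the motive $N':=N^{\vee}(d)[2d]$, which is again a direct summand of $M(Y)$ because $M(Y)^{\vee}=M(Y)(-d)[-2d]$. For any finitely generated extension $K/k$,
\[
\CH_0(N'_K)=\Hom(\zz,N^{\vee}(d)[2d]_K)=\Hom(N_K,\zz(d)[2d])=\moco{2d}{d}(N_K,\zz)=0,
\]
the last equality being our hypothesis since $d>r$. Lemma \ref{lem-r-eff} then yields a smooth projective equi-dimensional $P$ with $\ddim(P)=d-1$ such that $N'$ is a direct summand of $M(P)(1)[2]$. Dualizing, $N$ is a direct summand of $(M(P)(1-d)[2-2d])^{\vee}=M(P)^{\vee}(d-1)[2(d-1)]$; and since $\ddim(P)=d-1$, $M(P)^{\vee}=M(P)(-(d-1))[-2(d-1)]$, so the twists cancel and $N$ is a direct summand of $M(P)$.

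Iterating this descent, after $d-r$ applications I arrive at a smooth projective equi-dimensional $X$ of dimension exactly $r$ with $N$ a direct summand of $M(X)$. The only real subtlety is the bookkeeping in the duality step: one must verify that the Tate twist $(1)[2]$ coming from Lemma \ref{lem-r-eff} is exactly cancelled by the dualization of $M(P)$ (using $\ddim(P)=d-1$), so that the descent preserves the form ``direct summand of $M(\,\cdot\,)$''. Once this is set up, the procedure terminates precisely at $\ddim=r$, because the vanishing $\moco{2i}{i}(N_K,\zz)=0$ that fuels the descent is only guaranteed for $i>r$.
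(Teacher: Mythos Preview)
Your proof is correct and follows essentially the same route as the paper: invoke Proposition~\ref{Chow-r-eff} (with $r=0$) to realize $N$ inside $M(Y)$ for some equi-dimensional $Y$, then pass to the dual summand $N'=N^{\vee}(d)[2d]=\Homi(N,\zz(d)[2d])$ in $M(Y)$, observe that $\CH_0(N'_K)=\moco{2d}{d}(N_K,\zz)=0$ for $d>r$, apply Lemma~\ref{lem-r-eff}, and dualize back using $M(P)^{\vee}=M(P)(-(d-1))[-2(d-1)]$ to descend to dimension $d-1$; iterate until $d=r$. The only cosmetic difference is that the paper denotes your $N'$ simply by $N^{\vee}$ (meaning the summand of $M(Y)$ cut out by the transposed projector), whereas you keep the Tate shift explicit.
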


\begin{proof}
 From Proposition \ref{Chow-r-eff}, $N$ is a direct summand
 in the motive $M(Y)$ of some smooth projective (possibly disconnected) equi-dimensional variety $Y/k$. Let $d=\ddim(Y)$. If $d\leq r$, we are done (can always multiply $Y$ by an
 appropriate projective space, if needed). Assume $d>r$.
 Consider $N^{\vee}$ - a direct summand given in $M(Y)$ by
 the dual projector. Then $\moho{2i}{i}(N^{\vee}_K,\zz)=
 \moco{2(d-i)}{d-i}(N_K,\zz)$ because 
 $N^{\vee}=\Homi(N,\zz(d)[2d])$. In particular, 
 $\CH_0(N^{\vee})=0$, for any field
 extension $K/k$ and so, by Lemma \ref{lem-r-eff},
 $N^{\vee}$ is a direct summand of $M(P)(1)[2]$, where
 $P$ is a smooth projective variety of dimension $d-1$.
 Since $M(P)=\Homi(M(P)(1)[2],\zz(d)[2d])$, we get that
 $N$ is a direct summand of $M(P)$. Applying this
 argument inductively, we obtain that $N$ is a direct
 summand of the motive $M(X)$ of a smooth projective
 variety $X$ of dimension $r$.
 \Qed
\end{proof}

This result can be also deduced from 
\cite[Theorem 3.2.1(1), Corollary 2.2.4(2), 
Proposition 5.2.1]{BS}.

\bigskip

\begin{itemize}
\item[address:] {\small School of Mathematical Sciences, University of Nottingham, University Park, Nottingham, NG7 2RD, UK}
\item[email:] {\small\ttfamily alexander.vishik@nottingham.ac.uk}
\end{itemize}

\end{document}